\newtheorem*{thmA}{Theorem A}
\newtheorem*{thmB}{Theorem B}
\newtheorem*{thmC}{Theorem C}
\newtheorem{theorem}{Theorem}[subsection]
\newtheorem{lemma}[theorem]{Lemma}
\newtheorem{prop}[theorem]{Proposition}
\newtheorem{cor}[theorem]{Corollary}
\theoremstyle{definition}
\newtheorem{defn}[theorem]{Definition}
\newtheorem{rem}[theorem]{Remark}
\newtheorem{ex}[theorem]{Example}
\newcommand{\sbt}{\,\begin{picture}(-1,1)(0.5,-1)\circle*{1.8}\end{picture}\hspace{.05cm}}
\DeclareMathOperator{\pr}{pr}
\DeclareMathOperator{\id}{id}
\DeclareMathOperator{\map}{Map}
\DeclareMathOperator{\Top}{Top}
\DeclareMathOperator{\Ob}{Ob}
\DeclareMathOperator*{\hocolim}{hocolim}
\DeclareMathOperator*{\holim}{holim}
\DeclareMathOperator{\THH}{THH}
\DeclareMathOperator{\sym}{Sym}
\DeclareMathOperator{\hofib}{hof}
\DeclareMathOperator{\hoc}{hoc}
\DeclareMathOperator{\conn}{Conn}
\DeclareMathOperator{\res}{res}
\DeclareMathOperator{\KR}{KR}
\DeclareMathOperator{\K}{K}
\DeclareMathOperator{\Sp}{Sp}
\DeclareMathOperator{\Simp}{Simp}
\DeclareMathOperator{\GW}{GW}
\DeclareMathOperator{\HR}{HR}
\begin{document}
\begin{center}\LARGE{Equivariant calculus of functors and $\mathbb{Z}/2$-analyticity of Real algebraic $K$-theory}
\end{center}

\begin{center}\large{Emanuele Dotto}\let\thefootnote\relax\footnotetext{ Partially supported by
ERC Adv.Grant No.228082. and by the Danish National Research Foundation through the Centre 
for Symmetry and Deformation (DNRF92)
}\end{center}
\vspace{.3cm}

\begin{quote}
\textsc{Abstract}. We define a theory of Goodwillie calculus for enriched functors
from finite pointed simplicial $G$-sets to symmetric $G$-spectra, where $G$ is a finite group. We extend a notion of $G$-linearity suggested by Blumberg to define stably excisive and $\rho$-analytic
homotopy functors, as well as a $G$-differential, in this equivariant context. A main result of the paper is that analytic functors with
trivial derivatives send highly connected $G$-maps to $G$-equivalences.
It is analogous to the classical result of Goodwillie that ``functors with
zero derivative are locally constant''. As main example we show that
Hesselholt and Madsen's Real algebraic $K$-theory of a split square zero
extension of Wall antistructures defines an analytic functor in the
$\mathbb{Z}/2$-equivariant setting. We further show that the equivariant derivative of this Real $K$-theory functor is $\mathbb{Z}/2$-equivalent to Real MacLane homology.
\end{quote}

\section*{Introduction}

Calculus of functors was developed in Goodwillie's seminal papers \cite{calcI},\cite{calcII} and \cite{calcIII}, and found important applications in algebraic $K$-theory of rings \cite{McCarthy}, $A$-theory \cite{calcIII}, and stable mapping spaces \cite{Arone}. In the current paper, we are interested in developing a theory of equivariant calculus tailored to study the relationship between the Real algebraic $K$-theory of rings with Wall antistructures and their Real topological Hochschild and cyclic homology.

Hesselholt and Madsen define in \cite{IbLars} a functor $\KR$ that associates to a ring $A$ equipped with a Wall antistructure (in the sense of \cite{Wall}) a symmetric $\mathbb{Z}/2$-spectrum, with underlying spectrum equivalent to the algebraic $K$-theory of $A$. Given an $A$-bimodule $M$ with a suitable involutive structure, and a pointed finite simplicial $\mathbb{Z}/2$-set $X$, we define a symmetric $\mathbb{Z}/2$-spectrum $\widetilde{\KR}\big(A\ltimes M(X)\big)$ as the homotopy fiber of the projection map
\[\widetilde{\KR}\big(A\ltimes M(X)\big)=\hofib\Big(\KR\big(A\ltimes M(X)\big)\longrightarrow \KR(A)\Big)\]
Here $M(X)$ is a simplicial $A$-bimodule defined by the Dold-Thom construction, and $\ltimes$ denotes the semi-direct product. This construction is functorial in $X$, defining a functor from finite pointed simplicial $\mathbb{Z}/2$-sets to symmetric $\mathbb{Z}/2$-spectra. The analytic properties, in the sense of Goodwillie calculus, of the corresponding functor $\widetilde{\K}\big(A\ltimes M(-)\big)$ from pointed simplicial sets to spectra are studied extensively in \cite{McCarthy}, and play a crucial role in the fundamental relationship between algebraic $K$-theory and topological cyclic homology. The present paper studies the analogous analytic properties of the Real $K$-theory functor $\widetilde{\KR}\big(A\ltimes M(-)\big)$, in the sense of a suitable theory of equivariant calculus.

In its original form, functor calculus was developed for functors between categories of pointed spaces or spectra, but it was later extended to the generality of model categories in \cite{BO} and \cite{BCO}, or to quasi-categories in \cite{HH}. This homotopy theoretical calculus is however inadequate to study functors between categories of equivariant objects. As an example, let $G$ be a finite group and let $G$-$\Top_\ast$ be the category of pointed spaces with a $G$-action, equipped with the fixed points model structure. The first excisive approximation of functor calculus of a reduced enriched functor $F\colon G$-$\Top_\ast\to G$-$\Top_\ast$ is equivalent to the stabilization
\[P_1F(X)\simeq \hocolim\big(F(X)\longrightarrow \Omega F(\Sigma X)\longrightarrow \Omega^2 F(\Sigma^2 X)\longrightarrow \dots\big)\]
This stabilization produces a ``na\"{i}ve'' homology theory, instead of a ``genuine'' one, as this colimit does not take into account non-trivial representations of $G$. This phenomenon is the consequence of a definition of excision that is not adequate for dealing with equivariant homotopy theory. For enriched functors $F\colon G$-$\Top_\ast\to G$-$\Top_\ast$,  a suitable definition of $G$-excision was suggested by Blumberg in \cite{Blumberg}, where the author adds to excision an extra compatibility condition with equivariant Spanier-Whitehead duality. A similar condition was already present in \cite[1.4]{Shima} in the context of $\Gamma_G$-spaces.

\vspace{.5cm}

We follow Blumberg's idea, and we extend his definition of $G$-excision to homotopy functors enriched in $G$-spaces $F\colon G\mbox{-}\mathcal{S}_\ast\rightarrow \Sp^{\Sigma}_G$ from the category of finite pointed simplicial $G$-sets to symmetric $G$-spectra, for a finite group $G$. We define $F$ to be $G$-excisive if it sends homotopy pushout squares to homotopy pullback squares, and if for every finite $G$-set $J$ the canonical map
\[F(\bigvee_JX)\longrightarrow \prod_JF(X)\]
is an equivalence of $G$-spectra, for every pointed finite simplicial $G$-set $X$. Here $G$ acts on $X$ and permutes the $J$-indexed components, both on the wedge and on the product. The first analogy with classical excision is that a $G$-excisive functor $F\colon G\mbox{-}\mathcal{S}_\ast\rightarrow\Sp^{\Sigma}_G$ that sends the point to the point is $G$-equivalent to a functor of the form $C\wedge (-)$, for some genuine $G$-spectrum $C$ (see Corollary \ref{corlinearsmashspectrum}).

The differential (at a point) of a reduced enriched functor $F\colon G\mbox{-}\mathcal{S}_\ast\rightarrow\Sp^{\Sigma}_G$ is the stabilization
\[D_\ast F(X)=\hocolim_{n\in\mathbb{N}}\Omega^{n\rho}F(X\wedge S^{n\rho})\]
where $\rho$ is the regular representation of $G$. The fundamental property of the construction $D_\ast$ is that it sends stably $G$-excisive functors (Definition \ref{defstablin}) to $G$-excisive functors (Proposition \ref{derstablinislin}). Moreover for a stably $G$-excisive functor $F$ we have control on the connectivity of the canonical map $F\to D_{\ast}F$ (see Proposition \ref{connlinapprox}).

As an example of this construction, we identify the derivative of the Real $K$-theory functor 
\[\widetilde{\KR}\big(A\ltimes M(-)\big)\colon \mathbb{Z}/2\mbox{-}\mathcal{S}_\ast\longrightarrow\Sp^{\Sigma}_{\mathbb{Z}/2}\]
We define the Real MacLane homology $\HR(C;M)$ of an exact category with duality $C$ with coefficients in a bimodule with duality $M$ (Definition  \ref{defMacLane}). Its construction is analogous to the model of $\THH$ used in \cite[3.2]{DM}. In particular a bimodule $M$ over a ring with Wall antistructure $A$ induces a bimodule with duality on the category of finitely generated projective $A$-modules $\mathcal{P}_A$, with associated MacLane homology $\HR(A;M)$. The following is proved in Section \ref{splitting}.

\begin{thmA} Let $A$ be a ring with Wall antistructure and $M$ a bimodule over it (Definition \ref{bimodule}).
For every finite pointed simplicial $\mathbb{Z}/2$-set $X$, there is a natural $\pi_{\ast}$-equivalence of symmetric $\mathbb{Z}/2$-spectra
\[D_\ast\widetilde{\KR}\big(A\ltimes M(X)\big)\simeq \HR\big(A;M(S^{1,1})\big)\wedge|X|\]
where $M(S^{1,1})$ is the equivariant Dold-Thom construction of the sign-representation sphere $S^{1,1}$.
\end{thmA}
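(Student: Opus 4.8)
The plan is to reduce the statement, using the calculus machinery of the paper, to the computation of a single $\mathbb{Z}/2$-spectrum, and then to compute that spectrum by an equivariant version of the Dundas--McCarthy identification of stable $K$-theory with topological Hochschild homology. Write $F=\widetilde{\KR}\big(A\ltimes M(-)\big)$. Since $M(\ast)=0$ we have $F(\ast)=\widetilde{\KR}(A)=\ast$, so $F$ is a reduced homotopy functor. As $\widetilde{\KR}\big(A\ltimes M(-)\big)$ is analytic, it is in particular stably $\mathbb{Z}/2$-excisive (Definition~\ref{defstablin}), so by Proposition~\ref{derstablinislin} the functor $D_\ast F$ is $\mathbb{Z}/2$-excisive, and it is still reduced. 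Corollary~\ref{corlinearsmashspectrum} then supplies a genuine $\mathbb{Z}/2$-spectrum $C$ and a natural $\pi_\ast$-equivalence $D_\ast F(X)\simeq C\wedge|X|$, where $C\simeq D_\ast F(S^0)=\hocolim_{n}\Omega^{n\rho}\widetilde{\KR}\big(A\ltimes M(S^{n\rho})\big)$. Thus the theorem reduces to producing a $\pi_\ast$-equivalence $C\simeq\HR\big(A;M(S^{1,1})\big)$; naturality in $X$ then comes from Corollary~\ref{corlinearsmashspectrum}.

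For the computation of $C$ I would carry out the argument of \cite{McCarthy} $\mathbb{Z}/2$-equivariantly. For a $(k-1)$-connected bimodule with duality $N$, the relative Real $K$-theory $\widetilde{\KR}(A\ltimes N)$ of the split square-zero extension carries a natural filtration, polynomial in $N$, whose layers $q_j(N)$, $j\geq1$, are built from the tensor powers $N^{\wedge_A j}$; this is the content of the ``splitting'' of Section~\ref{splitting}. The linear layer $q_1(N)$ is, by the (Real) cyclic bar model, $\mathbb{Z}/2$-equivalent to $\HR(A;N)\wedge S^{1,1}$: the linear part of relative Real $K$-theory of a square-zero extension is Real MacLane homology, exactly as classically stable $K$-theory is $\THH$, and the $S^{1,1}$ is the Real counterpart of the suspension by which stable $K$-theory and $\THH$ differ -- relative $K$-theory being one degree more connected than its defining ideal, a degree which the Real $S_\bullet$-construction records as a sign-twisted one. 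Using that $M(S^{n\rho})$ is $(n-1)$-connected, that $q_1\big(M(S^{n\rho})\big)\simeq\Sigma^{n\rho}\big(\HR(A;M)\wedge S^{1,1}\big)$ (by exactness of $\HR(A;-)$ in the coefficients and the equivariant Dold--Thom equivalence $\widetilde{\mathbb{Z}}[S^{V}]\simeq\Sigma^{V}H\underline{\mathbb{Z}}$), and the equivariant connectivity estimates for the higher layers $q_j$, one gets
\[
C\;\simeq\;\hocolim_{n}\Omega^{n\rho}q_1\big(M(S^{n\rho})\big)\;\simeq\;\hocolim_{n}\Omega^{n\rho}\Sigma^{n\rho}\big(\HR(A;M)\wedge S^{1,1}\big).
\]
Since $\HR(A;M)\wedge S^{1,1}$ is a genuine symmetric $\mathbb{Z}/2$-spectrum, hence $\Sigma^{\rho}$-invertible, the structure maps of this colimit are $\pi_\ast$-equivalences, so $C\simeq\HR(A;M)\wedge S^{1,1}\simeq\HR(A;M)\wedge\widetilde{\mathbb{Z}}[S^{1,1}]\simeq\HR\big(A;M(S^{1,1})\big)$, again by exactness and Dold--Thom.

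The main obstacle is everything concerning connectivity in the equivariant, $RO(\mathbb{Z}/2)$-graded setting: establishing the splitting of $\widetilde{\KR}(A\ltimes N)$ and the connectivity estimates for its layers $q_j(N)$ -- the genuine analogue of the connectivity theorems of Goodwillie and McCarthy for relative $K$-theory, which requires an equivariant Blakers--Massey theorem and careful control of the Real $S_\bullet$-construction -- and, intertwined with this, pinning down the linear layer precisely, in particular the appearance of the sign-twisted sphere $S^{1,1}$ (rather than $S^{1,0}$ or $S^{\rho}$) as the residual suspension coordinate. Granting these inputs, the reduction of the colimit to the linear layer and the final identification with $\HR\big(A;M(S^{1,1})\big)$ are formal, using only $\Sigma^{\rho}$-invertibility of genuine $\mathbb{Z}/2$-spectra and the exactness of $\HR(A;-)$.
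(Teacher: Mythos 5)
Your first reduction is fine and matches the paper: analyticity gives stable $\mathbb{Z}/2$-excisiveness, Proposition \ref{derstablinislin} makes $D_\ast F$ $\mathbb{Z}/2$-excisive, and Corollary \ref{corlinearsmashspectrum} reduces everything to identifying $C=D_\ast F(S^0)$. But the central step --- the identification of $C$ with $\HR\big(A;M(S^{1,1})\big)$ --- is not actually proven in your proposal: you posit a McCarthy-style polynomial filtration of $\widetilde{\KR}(A\ltimes N)$ with layers $q_j(N)$ built from tensor powers, assert that the linear layer is $\HR(A;N)\wedge S^{1,1}$, and then explicitly defer to ``granted inputs'' exactly the two points that carry all the content: the equivariant connectivity of the higher layers (which involves fixed points of smash powers with a duality permuting the factors, a genuinely delicate point) and the reason the residual suspension coordinate is the sign sphere $S^{1,1}$ rather than $S^1$ or $S^\rho$. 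Moreover, Section \ref{splitting} of the paper does not contain such a filtration; attributing it to that section misdescribes what is available.

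What the paper actually does is different and more direct. First, Theorem \ref{relKRandKR} is a categorical splitting: $\mathcal{P}_{A\ltimes M(X)}$ is identified with the semidirect product category $\mathcal{P}_A\ltimes H^{M(X)}$ (Lemma \ref{classsplit}, Proposition \ref{splitPA}), and a swallowing argument (Proposition \ref{swallow}) replaces the nerve of the isomorphism category by the disjoint union of the groups $M(c,c)$; the extra nerve direction of these groups, with its order-reversing duality $(m_1,\dots,m_p)\mapsto(h(m_p),\dots,h(m_1))$, is literally $M(S^{1,1})$. This is where $S^{1,1}$ comes from --- no connectivity estimate is involved. Second, Proposition \ref{Drel} computes the derivative of $\widetilde{\KR}(S;N(-))$ via the trace map to $\HR(S;N(-))$: the latter is already $\mathbb{Z}/2$-linear, and the wedge-into-sum map has equivariant connectivity roughly $\big(2\conn X,\min\{2\conn X^{\mathbb{Z}/2},\conn X\}\big)$, which diverges after applying $\Omega^{n\rho}(-\wedge S^{n\rho})$; hence $D_\ast\widetilde{\KR}(S;N(X))\simeq\HR(S;N)\wedge|X|$. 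Note also that your final step $\HR(A;M)\wedge S^{1,1}\simeq\HR\big(A;M(S^{1,1})\big)$ is an additional unproven claim: $S^{1,1}$ is a Real (not $\mathbb{Z}/2$-) simplicial set, so the $\mathbb{Z}/2$-linearity of $\HR(S;N(-))$ on $\mathbb{Z}/2\mbox{-}\mathcal{S}_\ast$ does not apply to it directly; the paper deliberately keeps $M(S^{1,1})$ inside the coefficients to avoid this. As written, your argument has the right target and the right formal frame, but the theorem's substance is contained in the steps you leave open.
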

We mention for completeness that the author's thesis \cite{thesis} contains a theory of Real topological Hochschild homology, and it identifies the Real MacLane homology $\HR\big(A;M)$ with the Real topological Hochschild homology of $A$ with coefficients in $M$, at least in the case where $2$ is invertible in $A$. 

We further develop the theory of equivariant calculus by defining a notion of $G$-$\rho$-analytic functors. We do this in analogy with \cite[4.2]{calcII}, by introducing connectivity ranges in the definition of $G$-excision. In equivariant homotopy the connectivity of a map is a function on the subgroups of $G$, hence the $\rho$ above is a function $\rho\colon \{H\leq G\}\to \mathbb{Z}$. 
There is a construction of the derivative which is relative to a pointed finite simplicial $G$-set $B$. It gives rise to a functor $D_BF$ defined on the category of finite retractive simplicial $G$-sets over $B$ (see Definition \ref{defdifferential}). Theorem $B$ below is proved in Section \ref{secanalytic} and it is analogous to Goodwillie's result from \cite[5.4]{calcII} that ``functors with zero derivative are locally constant''.

\begin{thmB} A suitable $G$-$\rho$-analytic functor $F\colon G\mbox{-}\mathcal{S}_\ast\rightarrow\Sp^{\Sigma}_G$ whose derivatives are weakly $G$-contractible sends $\rho$-connected split-surjective equivariant maps to $\pi_\ast$-equivalences of symmetric $G$-spectra. In particular if $X$ is $\rho$-connected, $F(X)$ is $G$-contractible.
\end{thmB}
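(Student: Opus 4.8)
Here is the plan. It follows the strategy of Goodwillie's proof of \cite[5.4]{calcII}, substituting for each homotopy-theoretic ingredient its equivariant counterpart constructed above; the role played classically by ``the first derivative of $F$ at $Y$'' is here played by the relative differential $D_BF$ of Definition~\ref{defdifferential}, and the hypothesis that all the $D_BF$ be weakly $G$-contractible is the equivariant form of ``$F$ has vanishing derivatives at every point'', which (as classically) is strictly stronger than the vanishing of $D_\ast F$ alone and morally trivializes all of the higher Taylor behaviour of $F$.

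\emph{Reduction to a relative statement.} Let $f\colon X\to Y$ be $\rho$-connected and split-surjective, and fix an equivariant section $s\colon Y\to X$, making $X$ a finite retractive simplicial $G$-set over $Y$. Consider the $Y$-reduced homotopy functor $\widetilde{F}_Y(-)=\hofib\big(F(-)\to F(Y)\big)$ on finite retractive simplicial $G$-sets over $Y$. It suffices to show that $\widetilde{F}_Y(X)$ is weakly $G$-contractible, since then $F(f)$ is a $\pi_\ast$-equivalence; the last assertion of the theorem is the case $Y=\ast$. The $G$-$\rho$-analyticity of $F$ passes to a relative form for $\widetilde{F}_Y$: it is stably $G$-excisive in the sense of Definition~\ref{defstablin}, with estimates controlled by $\rho$. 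Hence by Proposition~\ref{derstablinislin} its relative differential $D_B\widetilde{F}_Y=D_BF$ is $G$-excisive, and by Proposition~\ref{connlinapprox} the natural map $\widetilde{F}_Y(Z)\to D_YF(Z)$ is highly connected, with a lower bound on its connectivity that grows strictly super-linearly --- of order roughly $2k-c$ --- in the connectivity $k$ of $Z$ over $Y$, connectivity here being a function on the subgroups $H\leq G$.

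\emph{Bootstrap.} By hypothesis $D_BF$ is weakly $G$-contractible for every $B$, so the estimate above, applied at $Z=X$, already gives that $\widetilde{F}_Y(X)$ is $\psi(\rho)$-connected with $\psi(\rho)>\rho$; that the gain is strict is exactly what the suitability hypotheses on $F$ (positivity of the excess in the analyticity estimate) guarantee. One then upgrades this to $\infty$-connectedness by induction along a cellular filtration: write $X$ as a finite tower of equivariant cell attachments over $s(Y)$, say $s(Y)=X_0\subseteq X_1\subseteq\dots\subseteq X_N=X$, in which each $X_i\to Y$ remains a $\rho$-connected split-surjective map while the inclusions $X_{i-1}\subseteq X_i$ become ever more highly connected. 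A repeated application of the super-linear estimate --- organized via strongly cocartesian cubes built from the attaching maps together with the stably $G$-excisive estimates $E_n$, exactly as in \cite[\S5]{calcII} --- then pushes the connectivity of $\widetilde{F}_Y(X_i)$ up without bound as $i$ grows, so that $\widetilde{F}_Y(X)\simeq\hocolim_i\widetilde{F}_Y(X_i)$ is $\infty$-connected on every subgroup, i.e. weakly $G$-contractible.

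\emph{Main obstacle.} The homotopical skeleton of this argument is classical; the real work, and where I expect the main difficulty, is the equivariant bookkeeping. Connectivity is the function $H\mapsto\operatorname{conn}(Z^H)$, and one must ensure that each step of the bootstrap improves this entire function uniformly, not merely its minimum; that the genuine differential $D_\ast F=\hocolim_n\Omega^{n\rho}F(-\wedge S^{n\rho})$, built from the regular representation $\rho$, is compatible both with the cellular filtration and with the relative construction $D_BF$; and that the stably $G$-excisive estimates for $\widetilde{F}_Y$ really do follow from $G$-$\rho$-analyticity of $F$ with constants uniform enough to survive the induction. Granting these equivariant refinements of the estimates of \cite{calcII}, the argument proceeds as sketched.
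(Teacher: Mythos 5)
Your reduction to showing that $\widetilde{F}_Y(X)$ is weakly $G$-contractible, and your identification of the linear approximation $\widetilde{F}_Y(Z)\to D_YF(Z)$ together with the vanishing of the differentials as the engine of the proof, are correct and match the paper. But the bootstrap you describe does not work as stated, for two reasons. First, the induction cannot run along a cellular filtration $X_0\subseteq\dots\subseteq X_N=X$ of the fixed space $X$: the connectivity of $\widetilde{F}_Y(X)$ supplied by Proposition \ref{connlinapprox} is bounded by (roughly) $2(\conn f-c)$, where $c$ is the fixed excision constant of $F$, and re-examining $X$ or its skeleta cannot improve this bound. What must be improved is the excision constant itself: as in \cite[5.4]{calcII} and in the paper, one shows that $E_{n}^{G}(\rho-\tfrac{q}{n+1},\rho+1)$ for all $n$ implies the same with $q$ increased by $1$, by doubling the first initial map of a strongly cocartesian cube so as to replace it by its canonical retraction $p^{(1)}$ (which is one more connected), estimating $F(p^{(1)})$ via Remark \ref{rightestimate} and Lemma \ref{inductionderarpoint}, and iterating the doubling $m$ times until the resulting estimate beats $c+m$. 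The cellular induction you invoke is really the content of Lemma \ref{inductionderarpoint} (contractibility of $D_BF(X)$ for all $X$ given contractibility at $B\vee S^0$), where it is legitimate because $D_BF$ is $G$-linear; $\widetilde{F}_Y$ is not linear, so the same induction gives nothing when applied to it directly.

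Second, and this is the genuinely equivariant point your proposal leaves unresolved (you flag it as the ``main obstacle'' but propose to improve the connectivity function uniformly in the subgroup, which is exactly what cannot be done): the estimates of Lemma \ref{connwedgesitoprodtop} and of condition $W(v,\kappa)$ have the shape $\min\{2\conn p^H,\min_{K\lneqq H}\conn p^K\}$, and only the first term doubles under the retraction/suspension trick; the terms indexed by proper subgroups do not improve. Consequently the bootstrap at a subgroup $L$ can only succeed once the theorem, and infinite excision constants, are already known for all proper subgroups $K\lneqq L$. The paper therefore runs an outer induction on the subgroups of $G$, proving a statement $\mathcal{I}(L)$ that makes the constant $r$ infinite on subgroups of $L$ one at a time; the second term of the estimate in Remark \ref{rightestimate} is then infinite by the inductive hypothesis, and the inner Goodwillie bootstrap handles the remaining $2\conn p^L$ term. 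Without this induction on subgroups your argument stalls after the first application of the linear approximation.
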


Our main example of a $G$-analytic functor is Hesselholt and Madsen's Real algebraic $K$-theory functor $\widetilde{\KR}\big(A\ltimes M(-)\big)$, for the group $G=\mathbb{Z}/2$. The following is proved in Section \ref{splitting}.

\begin{thmC} Let $M$ be a bimodule over a ring $A$ with Wall-antistructure (Definition \ref{bimodule}). The relative Real $K$-theory functor 
\[\widetilde{\KR}\big(A\ltimes M(-)\big)\colon \mathbb{Z}/2\mbox{-}\mathcal{S}_\ast\longrightarrow
\Sp^{\Sigma}_{\mathbb{Z}/2}\]
is $\mathbb{Z}/2$-$\rho$-analytic, where $\rho$ is the function on the subgroups of $\mathbb{Z}/2$ with values $\rho(1)=-1$ and $\rho(\mathbb{Z}/2)=0$.
\end{thmC}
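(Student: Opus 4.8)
The plan is to verify the defining connectivity estimates of $\mathbb{Z}/2$-$\rho$-analyticity---formulated, as in \cite[4.2]{calcII}, via stable $n$-excision with constants growing at most affinely in $n$---directly from Hesselholt and Madsen's $S_\bullet$-model for $\KR$, following McCarthy's treatment of the non-equivariant case \cite{McCarthy} but tracking connectivity as a function on the two subgroups of $\mathbb{Z}/2$. Writing $F=\widetilde{\KR}\big(A\ltimes M(-)\big)$, it suffices to show that $F$ is a homotopy functor commuting with filtered colimits (immediate from the construction) and that, for each $n\geq 1$, $F$ satisfies a stable $n$-excision estimate (cf. Definition~\ref{defstablin}) uniform in $n$ up to an affine error: there is a function $q$ on the subgroups of $\mathbb{Z}/2$ such that, for every strongly cocartesian $(n+1)$-cube $\mathcal{X}$ of finite pointed simplicial $\mathbb{Z}/2$-sets whose structure maps $X_\emptyset\to X_{\{i\}}$ are $k_i$-connected---as functions on subgroups---with each $k_i\geq\rho+1$, the cube $F(\mathcal{X})$ of symmetric $\mathbb{Z}/2$-spectra is $\big(q+\sum_i k_i-n\rho\big)$-cartesian.

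To obtain such estimates I would first replace $F$ by an explicit model. The functor $F$ factors through the equivariant Dold--Thom construction $X\mapsto M(X)$, which lands in simplicial $A$-bimodules equipped with an involution twisted by the $\mathbb{Z}/2$-action on $X$; by the equivariant Dold--Thom theorem the equivariant homotopy of $M(X)$ is the reduced Bredon homology of $X$ with coefficients in the system built from $M$ and its involution, so $M(-)$ is linear ($1$-excisive), sends a $k$-connected $\mathbb{Z}/2$-map to a $k$-connected map, and sends a strongly cocartesian cube to a cartesian one, with the connectivity of $\mathcal{X}$ controlling that of every smash power $M(\mathcal{X})^{\wedge j}$. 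On the other side, $\KR$ of the exact category with duality $\mathcal{P}_{A\ltimes M(X)}$ is the $\mathbb{Z}/2$-realization of a Real $S_\bullet$-construction, whose $\mathbb{Z}/2$-action is induced by the duality and in which the duality-twisted simplicial direction contributes a sign-representation sphere $S^{1,1}$, whose underlying connectivity exceeds its fixed-point connectivity by one; this is the structural source of the asymmetry $\rho(1)=-1$, $\rho(\mathbb{Z}/2)=0$. The relative spectrum $\widetilde{\KR}\big(A\ltimes M(X)\big)$ is then the total $\mathbb{Z}/2$-homotopy fiber of this multisimplicial object over the one for $\mathcal{P}_A$; filtering $\mathcal{P}_{A\ltimes M(X)}$ by the number of $M$-factors, the relative object is, in each multidegree, built from finite wedges and products of the smash powers $M(X)^{\wedge j}$ indexed by flags, with the basepoint accounting for the empty word.

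Given these degreewise inputs, the cartesianness estimate for $F(\mathcal{X})$ is obtained by propagating the linear-functor estimates for the $M(\mathcal{X})^{\wedge j}$ through the Real $S_\bullet$-construction and its $\mathbb{Z}/2$-realization. On underlying spectra this reduces, by the identification of the underlying spectrum of $\KR$ with $\K$, to McCarthy's estimate for $\widetilde{\K}\big(A\ltimes M(-)\big)$. On $\mathbb{Z}/2$-fixed points, the fixed points of the Real $S_\bullet$-construction form a hermitian $S_\bullet$-type construction; here one invokes a Real additivity/delooping statement with an explicit connectivity estimate to control the loss in each simplicial direction, recording that each delooping along the duality-twisted direction is by $S^{1,1}$, which raises underlying connectivity by one more than fixed-point connectivity. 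Realizing the finitely many $S_\bullet$-directions needed for an $(n+1)$-cube---in the spirit of the iterated construction $S^{(n)}_\bullet$---costs a further amount affine in $n$, yielding a constant of the required form $n\rho-q$ with $\rho(1)=-1$, $\rho(\mathbb{Z}/2)=0$. The $n=1$ instance of this analysis is also the connectivity input feeding the proof of Theorem~A through Proposition~\ref{connlinapprox}.

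The main obstacle is the $\mathbb{Z}/2$-fixed-point bookkeeping: one must establish a Real additivity (delooping) theorem for $\KR$ with a sharp connectivity estimate, and verify that the fixed points of the Real $S_\bullet$-construction---where genuinely Grothendieck--Witt/hermitian phenomena, absent from the underlying theory, enter---obey the same cube estimates as the ordinary $S_\bullet$-construction, up to the predicted shift by $S^{1,1}$. A secondary, bookkeeping-heavy point is to check that the equivariant Dold--Thom construction is compatible with the duality in exactly the way making $M(S^{1,1})$---rather than $M(S^1)$---govern the fixed-point connectivity, consistently with the identification of the derivative in Theorem~A.
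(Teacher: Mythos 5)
Your overall strategy --- McCarthy-style connectivity estimates tracked on both subgroups of $\mathbb{Z}/2$, with the sign circle $S^{1,1}$ arising from the duality-twisted nerve direction as the source of $\rho(1)=-1$, $\rho(\mathbb{Z}/2)=0$ --- is the right one and matches the paper's in spirit. But there are two genuine gaps, both at exactly the points where the equivariant difficulty lives.

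First, the fixed-point estimate. You defer the $\mathbb{Z}/2$-fixed-point cartesianness of $F(\mathcal{X})$ to ``a Real additivity/delooping statement with an explicit connectivity estimate,'' which you yourself flag as the main obstacle; as stated this is not a proof, and the paper neither proves nor needs such a theorem. The paper's route is to first prove Theorem \ref{relKRandKR}, a natural equivalence $\widetilde{\KR}\big(A\ltimes M(X)\big)\simeq\widetilde{\KR}\big(A;M(S^{1,1}\wedge X)\big)$ with Real $K$-theory with coefficients, established by a categorical zig-zag (Propositions \ref{splitPA} and \ref{swallow}, an explicit equivariant swallowing homotopy). After this reduction the relative term in each multisimplicial degree is literally an indexed wedge of equivariant Dold--Thom constructions $\bigvee_s N_s(X)$, whose $\mathbb{Z}/2$-fixed points are computed by edgewise subdivision as a wedge of the spaces $N_s(X)^{\mathbb{Z}/2}$ over $s\in (sd_e S^{(n)})^{\mathbb{Z}/2}$; the cube estimate then follows degreewise from the dual Blakers--Massey theorem for spaces, and the $1$-reducedness of $S^{(n)}$ supplies exactly the extra $2n$ (underlying) and $n$ (fixed-point) connectivity needed to cancel $\dim S^{n\rho}$ and $\dim (S^{n\rho})^{\mathbb{Z}/2}$ (Theorem \ref{KRanalytic}). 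No hermitian additivity theorem enters. Relatedly, your description of the relative term as built from ``smash powers $M(X)^{\wedge j}$'' is off: because $A\ltimes M$ is square-zero, the nerve of the isomorphism category contributes products $M^{\times k}$ (the bar construction of the abelian group $M$), which realize to the single linear term $M(S^{1,1}\wedge X)$; genuine smash powers with $j\geq 2$ would destroy the stable first-order excision estimate you need.

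Second, Definition \ref{Ganalytic} requires, besides the cube conditions $E_n^{\mathbb{Z}/2}$, the indexed wedge-to-product condition $W(v,\rho+1)$ of Definition \ref{defstablin} --- the genuinely equivariant half of $G$-excision, which for a nontrivial group does not follow from the cube conditions. Your proposal never addresses it. In the paper this is the relative additivity part of Theorem \ref{KRanalytic}, proved from the $\mathbb{Z}/2$-linearity of the Dold--Thom coefficients together with Proposition \ref{wedgesintoprod} and the connectivity count of Lemma \ref{connwedgesitoprodtop}. Without it you have not verified $\mathbb{Z}/2$-$\rho$-analyticity.
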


%

In later work we will develop a theory of Real topological cyclic homology, receiving a trace map from Real algebraic $K$-theory. Theorems $A$, $B$ and $C$ will be crucial tools in establishing a relationship between the two theories, analogous to the Dundas-Goodwillie-McCarthy theorem of \cite{McCarthy} and \cite{DGM}.

\tableofcontents


\section{Preliminaries on enriched functors}

\subsection{Conventions about symmetric $G$-spectra}\label{spectrasec}

Let $G$ be a finite group. By a $G$-space, we will always mean a compactly generated Hausdorff space with a continuous action of the group $G$. We choose the category of symmetric $G$-spectra $\Sp^{\Sigma}_G$ of \cite{Mandell} as a model for stable equivariant homotopy theory, since Real algebraic $K$-theory fits naturally in this framework. 

\begin{defn}[\cite{Mandell}]
A symmetric $G$-spectrum consists of a well pointed $(\Sigma_n\times G)$-space $E_n$ for every $n$ in $\mathbb{N}$, and pointed $(\Sigma_n\times\Sigma_m\times G)$-equivariant maps
\[\sigma_{n,m}\colon E_n\wedge S^{m\rho}\longrightarrow E_{n+m}\]
satisfying the classical compatibility conditions. Here $S^{m\rho}$ is the one point compactification of the direct sum of $m$-copies of the regular representation $\rho=\mathbb{R}[G]$, and $G$ acts diagonally on the source of $\sigma_{n,m}$.
A $G$-map of $G$-spectra $E\to W$ is a collection of $(\Sigma_n\times G)$-equivariant maps $E_n\to W_n$ which respect the structure maps. The resulting category of symmetric $G$-spectra and $G$-maps is denoted $\Sp^{\Sigma}_G$.
\end{defn}

Let us explain which kind homotopical structure we consider on the category $\Sp^{\Sigma}_G$. We recall from \cite[5.1]{Mandell} that for every subgroup $H\leq G$, the $H$-homotopy groups of a $G$-spectrum $E$ are defined as
\[\pi^{H}_k E=\pi_0\hocolim_n(\Omega^{n\rho+k}E_n)^H\]
Here $\Omega^{n\rho+k}$ is the space of pointed maps from the sphere $S^{n\rho}\wedge S^k$ for $k\geq 0$, and from the sphere $S^{-k\overline{\rho}}\wedge S^{(n+k)\rho}$ for $k<0$, where the first smash factor is the representation sphere of $-k$ copies of the reduced regular representation $\overline{\rho}$ of $G$. 
The maps in the homotopy colimit system are induced by the adjoints of the structure maps $\sigma_{n,m}$ of $E$.
In order to carry out connectivity arguments, we will need to take as equivalences between $G$-spectra the $\pi_\ast$-equivalences, which do not coincide with stable equivalences of the stable model structure of $\Sp^{\Sigma}_G$. Because of this discrepancy between stable and $\pi_\ast$-equivalences, we avoid talking about model structures on $\Sp^{\Sigma}_G$ all together. By the homotopy limit and colimit of a diagram $X\colon I\rightarrow \Sp^{\Sigma}_G$ we will mean the raw Bousfield-Kan formulas
\[\holim_IX=\hom\big(NI/_{(-)},X\big) \ \ \ \ \ \ \hocolim_IX=\big((-)/NI\big)^{op}\otimes X\]
(see e.g. \cite[18.1.2-18.1.8]{Hirschhorn}). Here the cotensor and the tensor structures of $\Sp^{\Sigma}_G$ over simplicial sets are levelwise, and therefore so are homotopy limits and colimits.
In particular homotopy pullbacks and homotopy pushouts are formed levelwise. As level fibrations and level cofibrations induce long exact sequences in equivariant homotopy groups (see \cite[5.7-5.8]{Mandell}), homotopy pullbacks and homotopy pushouts preserve $\pi_\ast$-equivalences of symmetric $G$-spectra. This is all the homotopical information we are going to need and use about  $\pi_\ast$-equivalences in $\Sp^{\Sigma}_G$. In a context where the maps with arbitrarily high connectivity are the stable equivalences, for example the category of orthogonal $G$-spectra of \cite{ManMay} or \cite{Schwede}, all the results of the present paper can be interpreted in model categorical terms.

A fiber sequence of symmetric $G$-spectra is a sequence of $G$-equivariant maps of symmetric $G$-spectra  $F\rightarrow E\stackrel{f}{\rightarrow }W$ together with a $\pi_\ast$-equivalence $F\rightarrow\hofib(f)$ over $E$, where $\hofib(f)$ is the homotopy fiber of $f$. Similarly a cofiber sequence is a sequence of $G$-equivariant maps of symmetric $G$-spectra  $ E\stackrel{f}{\rightarrow }W\rightarrow C$ together with a $\pi_\ast$-equivalence under $W$ from the homotopy cofiber $\hoc(f)$ of $f$ to $C$.

\begin{rem}\label{fibhofib}
The canonical map $\hofib(f)\rightarrow \Omega\hoc (f)$ induced by taking horizontal homotopy fibers in the square
\[\xymatrix{E\ar[r]^-f\ar[d]&W\ar[d]\\
\ast\ar[r]&\hoc(f)
}\]
is a $\pi_\ast$-equivalence (see e.g. \cite[5.7-5.8]{Mandell}). This shows that every fiber sequence is canonically a cofiber sequence, and vice versa. This has the consequence that every homotopy cocartesian square of $G$-maps in $\Sp^{\Sigma}_G$ is also homotopy cartesian, and the other way around.
\end{rem}

We define an enrichment of $\Sp^{\Sigma}_G$ in the category $G$-$\Top_\ast$ of pointed $G$-spaces. Given two symmetric $G$-spectra $E$ and $W$ let $\Sp^\Sigma(E,W)$ be the set of collections of $\Sigma_n$-equivariant pointed maps $\{f_n\colon E_n\rightarrow W_n\}_{n\geq 0}$ that commute with the structure maps of $E$ and $W$. Endow $\Sp^\Sigma(E,W)$ with the subspace topology of the product of the mapping spaces $\map_\ast(E_n,W_n)$. The space  $\Sp^\Sigma(E,W)$ inherits a $G$-action by conjugation, defining a $G$-$\Top_\ast$-enrichment on $\Sp^{\Sigma}_G$. The $G$-fixed set $\Sp^\Sigma(E,W)^G$ is the set of morphisms of symmetric $G$-spectra from $E$ to $W$. For a subgroup $H\leq G$, an element of $\Sp^\Sigma(E,W)^H$ is called an $H$-equivariant map of symmetric $G$-spectra.

\begin{defn}
Let $E$ be a symmetric $G$-spectrum and let $\nu\colon \{H\leq G\}\to\mathbb{Z}$ be a function which is invariant on conjugacy classes. We say that $E$ is $\nu$-connected if $\pi^{H}_k E=0$ for every $k\leq \nu(H)$ and every subgroup $H$ of $G$. Let $\conn E$ be the largest of these functions, and let us denote its value at a subgroup $H$ by $\conn_H E$.
 A $G$-spectrum $E$ is weakly $H$-contractible if $\pi^{K}_\ast E=0$ for every subgroup $K$ of $H$.
\end{defn}

\begin{defn}\label{spGconn} Let  $f\colon E\to W$ be a $G$-equivariant map in $\Sp^{\Sigma}_G$, and let $\nu\colon\{H\leq G\}\rightarrow \mathbb{Z}$ be a function which is invariant on conjugacy classes. We say that $f$ is $\nu$-connected if its homotopy fiber is $(\nu-1)$-connected.
We say that $f$ is an $H$-equivalence if it induces an isomorphism in $\pi^{K}_\ast$ for every subgroup $K$ of $H$. In particular a $G$-equivalence is a $\pi_\ast$-equivalence.
\end{defn}

Categorical limits and colimits in $\Sp^{\Sigma}_G$ are also degreewise, in particular products and coproducts. The inclusion of the coproduct into the product of symmetric $G$-spectra is a $G$-equivalence, essentially by Remark \ref{fibhofib} above. In our context of equivariant calculus it is going to be a key point to consider coproducts and products which are indexed on finite sets with a non-trivial $G$-action.
Given a finite $G$-set $J$ and a well pointed $G$-space $X$  define $\bigvee_JX$ to be the coproduct of one copy of $X$ for every element in $J$, with $G$-action defined by $g(x,j)=(gx,gj)$. Define similarly a $G$-action on the product $\prod_JX$, by sending a $J$-tuple $\underline{x}$ to the $J$-tuple with $j$-component
\[g(\underline{x})_j=gx_{g^{-1}j}\]
The inclusion of wedges into products induces a $G$-equivariant map $\bigvee_JX\longrightarrow \prod_JX$.
As limits and colimits of $G$-spectra are levelwise, the analogous constructions for a symmetric $G$-spectrum $E$ gives a $G$-equivariant map of symmetric $G$-spectra
\[\bigvee_JE\longrightarrow \prod_JE\]
This map is a $\pi_\ast$-equivalence as a consequence of the Wirthm\"{u}ller isomorphism theorem (see e.g. \cite[\S 4]{Schwede}).
We conclude the section by proving an analogous result for a relative version of this map (Proposition \ref{wedgesintoprod} below), under some extra connectivity assumptions. Let $p\colon X\rightarrow B$ be an equivariant map of well-pointed $G$-spaces, and suppose that it has a $G$-equivariant section $s\colon B\rightarrow X$. Define $\bigvee_{B}^{J}X$ and $\widetilde{\prod}_{B}^{J}X$ respectively as the pushout and homotopy pullback squares
\[\xymatrix@=17pt{\bigvee_{\!\!J}B\ar[r]^-{\bigvee_J s}\ar[d]_{fold}&\bigvee_{\!\!J}X\ar[d]^{}\\
B\ar[r]&{\bigvee\limits_{B}}^{J}X}
\ \ \ \ \ \ \ \ \ \ \ \ \ \ \ \ \ \
\xymatrix@=15pt{\widetilde{\prod\limits_{B}}^{J}X\ar[r]&\prod_JX\ar[d]^{\prod_J p}\\
B\ar[r]_-{\Delta}\ar@{<-}[u(.8)]&\prod_JB
}\]
The canonical inclusion of wedge into products induces a $G$-equivariant map
\[{\bigvee\limits_{B}}^{J}X\longrightarrow \widetilde{\prod\limits_{B}}^{J}X\]
which factors through the categorical pullback.
The asymmetry between the homotopy pullback on the right and the categorical pushout on the left becomes homotopically irrelevant when either $s\colon B\rightarrow X$ is a cofibration or $p\colon X \rightarrow B$ is a fibration of $G$-spaces (in particular when $B$ is a point).
This construction extends levelwise to split maps of $G$-spectra $p\colon E\rightarrow B$, resulting in a $G$-equivariant map of $G$-spectra
${\bigvee\limits_{B}}^{J}E\longrightarrow \widetilde{\prod\limits_{B}}^{J}E$.

\begin{prop}\label{wedgesintoprod} Let $p\colon E\rightarrow B$ be a split $G$-map of symmetric $G$-spectra. Suppose that for every positive integer $n$ the spaces $E^{H}_n$ and $B^{H}_n$ are $(n|G/H|-c)$-connected for some integer $c$ independent on $n$. Then
the inclusion of wedges into products
\[{\bigvee\limits_{B}}^{J}E\longrightarrow \widetilde{\prod\limits_{B}}^{J}E\]
is a $G$-equivalence.
In particular for $B=\ast$ the map $\bigvee_JE\rightarrow \prod_JE$ is a $G$-equivalence.
\end{prop}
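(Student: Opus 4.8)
The plan is to prove the proposition level by level in the symmetric spectrum direction, via an equivariant Blakers--Massey estimate; the hypothesis on the connectivity of $E^{H}_n$ and $B^{H}_n$ is precisely what makes such a levelwise estimate converge in the colimit defining $\pi^{H}_\ast$. The point is that, in contrast to the absolute $\pi_\ast$-equivalence $\bigvee_J E\to\prod_J E$ recalled just before the statement, at each spectrum level $n$ the comparison map is merely a map of $(\Sigma_n\times G)$-spaces and will only be highly connected, not a weak equivalence; one must then check the connectivity grows fast enough to beat the suspension shift in $\pi^{H}_\ast$.

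First I would reduce to the case where $p$ is a levelwise $G$-fibration and its section $s$ a levelwise $G$-cofibration. This is harmless: the constructions ${\bigvee}_B^J(-)$ and $\widetilde{\prod}_B^J(-)$ are built levelwise out of homotopy pushouts and homotopy pullbacks, hence preserve $\pi_\ast$-equivalences, the connectivity hypothesis is invariant under $\pi_\ast$-equivalence, and, as the text observes, under this assumption the strict pushout computes the homotopy pushout and $\widetilde{\prod}$ the homotopy pullback. Both ${\bigvee}_B^J E$ and $\widetilde{\prod}_B^J E$ then carry canonical $G$-retractions onto $B$ compatible with $\Phi\colon {\bigvee}_B^J E\to\widetilde{\prod}_B^J E$, and I would pass to the cofibres of the two sections. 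Levelwise: quotienting the pushout $\bigvee_{B_n}^J E_n$ by $B_n$ collapses the $J$ copies of $s_n(B_n)$ inside $\bigvee_J E_n$, so this cofibre is $\bigvee_J\!\big(E_n/s_n(B_n)\big)$, while the cofibre of the section of $\widetilde{\prod}_{B_n}^J E_n$ is $\prod_J\!\big(E_n/s_n(B_n)\big)$ up to weak equivalence. Writing $F_n$ for the fibre of $p_n$, the split long exact sequences of the fibration and the cofibration together with the hypothesis give $\conn_{K}F_n\ge n|G/K|-O(1)$ for every $K\le G$ and $E_n/s_n(B_n)\simeq F_n$ in a range of degrees growing linearly in $n$. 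Under these identifications $\Phi_n$ becomes, on cofibres, the absolute inclusion ${\bigvee}_J F_n\to \prod_J F_n$ of the wedge into the product of a well pointed $G$-space.

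It therefore remains to bound the connectivity of ${\bigvee}_J Y\to\prod_J Y$ for a well pointed $G$-space $Y$ with $\conn_K Y\ge \gamma(K)$, applied to $Y=F_n$. On $K$-fixed points the target decomposes as a product indexed by the $K$-orbits of $J$, the orbit of $j$ contributing $Y^{K_j}$ with $K_j=\operatorname{Stab}_K(j)$. The $K$-fixed elements of $J$ contribute copies of $Y^{K}$, and the inclusion of the wedge into the product of those is $(2\gamma(K)+1)$-connected by ordinary Blakers--Massey; the remaining orbits contribute factors $Y^{K'}$ with $K'\subsetneq K$, and the further inclusion of a sub-product is $\min_{K'\subsetneq K}\gamma(K')$-connected. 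Hence ${\bigvee}_J Y\to\prod_J Y$ is $\mu$-connected with $\mu(K)\ge\min\!\big(2\gamma(K)+1,\ \min_{K'\subsetneq K}\gamma(K')\big)$. For $Y=F_n$, where $\gamma(K)\ge n|G/K|-O(1)$ and, crucially, $|G/K'|\ge|G/K|+1$ whenever $K'\subsetneq K$, this yields $\mu(K)\ge n|G/K|+n-O(1)$; the same estimate bounds the connectivity $\nu_n(K)$ of $\Phi_n$ on $K$-fixed points, since on cofibres of sections $\Phi_n$ agrees with ${\bigvee}_J F_n\to\prod_J F_n$ within a range of order $2n|G/K|$, which exceeds $n|G/K|+n$.

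Finally I would feed this into the colimit defining homotopy groups: for a $G$-map of symmetric $G$-spectra, $\pi^{H}_k$ is a sequential colimit over $n$ of maps of $H$-fixed mapping spaces out of $S^{n\rho+k}$, and the $K$-fixed cells of $\res_H S^{n\rho+k}$ have dimension at most $n|G/K|+k$ for $K\le H$; so if the level-$n$ map is $\nu_n$-connected with $\nu_n(K)-n|G/K|\to\infty$ for every $K\le H$, then $\pi^{H}_k$ of it is an isomorphism for all $k$. Here $\nu_n(K)\ge n|G/K|+n-O(1)$, so $\nu_n(K)-n|G/K|=n-O(1)\to\infty$, and we conclude that $\Phi$ is a $\pi_\ast$-equivalence, i.e.\ a $G$-equivalence; the case $B=\ast$ is the statement already recalled before the proposition. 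The main obstacle is the equivariant Blakers--Massey estimate of the third paragraph together with the bookkeeping in the last two: one must correctly account for the nontrivial $G$-orbits of $J$ and verify that the extra connectivity $n$ coming from the strict inequality $|G/K'|>|G/K|$ for proper subgroups indeed survives and outpaces the shift by $n|G/K|$ in the colimit. The reductions in the second paragraph (fibrancy, the split-retraction decomposition, and the identification of $\Phi_n$ on cofibres with the absolute wedge-into-product of $F_n$) are routine.
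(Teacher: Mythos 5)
Your overall strategy --- a levelwise connectivity estimate obtained from an orbit decomposition of $(\prod_J-)^K$ plus Blakers--Massey, followed by the observation that $\nu_n(K)-n|G/K|\to\infty$ because $|G/K'|\geq|G/K|+1$ for $K'\lneqq K$ --- is exactly the paper's (Lemma \ref{connwedgesitoprodtop} together with the colimit bookkeeping in the proof of Proposition \ref{wedgesintoprod}), and your estimate $\mu(K)\geq\min\{2\gamma(K)+1,\min_{K'\lneqq K}\gamma(K')\}$ is the paper's $\nu(H)$. The gap is in your second paragraph, the reduction to the absolute map $\bigvee_JF_n\to\prod_JF_n$ via cofibres of the sections. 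First, the claim that the cofibre of the section $B_n\to\widetilde{\prod}_{B_n}^{J}E_n$ is $\prod_J\big(E_n/s_n(B_n)\big)$ ``up to weak equivalence'' is false: already for $J$ of order two with trivial action and a trivial bundle $E=B\times F$, the homotopy pullback is $B\times F\times F$ and the cofibre of its section is the half-smash $(B\times F\times F)/(B\times\ast)$, whereas $\prod_J(E/B)$ is $\big((B\times F)/(B\times\ast)\big)^{\times 2}$; these agree only through roughly twice the connectivity of $F$. You do hedge this later (``within a range of order $2n|G/K|$''), and that range does suffice, but the identification as stated is an overstatement. Second, and more seriously, for maps of \emph{spaces} the connectivity of the induced map on cofibres of compatible sections does not in general bound the connectivity of the map itself; to pass from ``the map on cofibres is $\nu_n$-connected'' back to ``$\Phi_n$ is $\nu_n$-connected'' you need an additional Blakers--Massey/relative Hurewicz step, valid here only because all spaces in sight are highly connected with connectivity growing linearly in $n$, and this step is absent from your write-up. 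Without it the inference in your second and third paragraphs runs in the wrong direction.

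The paper avoids both issues by never leaving the relative setting: Lemma \ref{connwedgesitoprodtop} estimates the connectivity of ${\bigvee}_{B}^{J}X\to\widetilde{\prod}_{B}^{J}X$ directly on $H$-fixed points, factoring the map through an intermediate homotopy limit --- first enlarging the wedge to the product over the fixed set $J^H$ (Blakers--Massey applied inductively to the pushout square for $X^H\vee_{B^H}X^H\to X^H$), then adjoining the factors indexed by nontrivial orbits, whose contribution to the homotopy fibre is $\prod\hofib p^{H_j}$ and hence $(\min_{K\lneqq H}\conn p^K-1)$-connected. To repair your argument, either supply the missing highly-connected cofibre-to-map implication explicitly, or replace the cofibre reduction by a fibrewise comparison over $B$ (the maps of homotopy fibres over points of $B^H$ are the absolute wedge-into-product maps for the fibre of $p$), which is closer in spirit to what the paper does and avoids the half-smash discrepancy altogether.
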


\begin{rem} An example of a $G$-spectrum which satisfies the connectivity hypothesis above is the suspension spectrum $(X\wedge \mathbb{S}^G)_n=X\wedge S^{n\rho}$ of a well-pointed $G$-space $X$.
Proposition \ref{wedgesintoprod} holds without any connectivity assumption, by a relative version of the Wirthm\"{u}ller isomorphism theorem. In the present paper, we will use this result only in the presence of this strong connectivity hypothesis. Moreover the connectivity range of Lemma \ref{connwedgesitoprodtop} below will be used throughout the paper, and it is the motivation for ``the equivariant part'' of the definition of stably $G$-linear functors \ref{defstablin}.
\end{rem}

\begin{proof}[Proof of \ref{wedgesintoprod}]
In lemma \ref{connwedgesitoprodtop} below we show that the inclusion in spectrum degree $n$
\[\iota_n\colon{\bigvee\limits_{B}}^{J}E_n\longrightarrow \widetilde{\prod\limits_{B}}^{J}E_n\]
is $\nu_n$-connected, for $\nu_n(H)=\min\{2\conn p^{H}_n-1,\min_{K\lneqq H}\conn p^{K}_n\}$.
The homotopy fiber of $p$ satisfies the same connectivity hypothesis of $E$ and $B$. Thus there is a constant $c$ such that 
\[\nu_n(H)\geq \min\{2n|G/H|,\min_{K\lneqq H}n|G/K|\}-c\]
The the connectivity of the map induced by $\iota_n$ at the $n$-th stage of the homotopy colimit defining $\pi_{\ast}^H$ is therefore
\vspace{-.3cm}

\[\begin{array}{lll}\min_{K\leq H}\big(\nu_n(K)-\dim (S^{n\rho})^K\big)\\
\geq\min_{K\leq H}\big(\min\{2n|G/K|,\min_{L\lneqq K}n|G/L|\}-c-n|G/K|\big)\\
\geq \min_{K\leq H}\min\{n|G/K|,n\}-c\geq n-c
\end{array}\]
This goes to infinity with $n$, showing that the homotopy fiber of $\iota$ is weakly $G$-contractible.
\end{proof}

\begin{lemma}\label{connwedgesitoprodtop}
For every well-pointed $G$-space $X$, the canonical map ${\bigvee\limits_{B}}^{J}X\rightarrow \widetilde{\prod\limits_{B}}^{J}X$ is $\nu$-connected, for
\[\nu(H)=\min\{2\conn p^H-1,\min_{K\lneqq H}\conn p^K\}\]
\end{lemma}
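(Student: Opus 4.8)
The plan is to establish the connectivity estimate one subgroup at a time by passing to fixed points. A $G$-equivariant map of $G$-spaces is $\nu$-connected precisely when, for every subgroup $H\leq G$, the map of spaces obtained by taking $H$-fixed points is $\nu(H)$-connected, since $(-)^{H}$ commutes with homotopy fibers. Fix $H\leq G$. A harmless preliminary reduction replaces $X$ by the mapping cylinder of $s$, so that $s\colon B\to X$ becomes a $G$-cofibration; this alters neither the $G$-homotopy type of the comparison map nor any of the connectivities $\conn p^{K}$, and it turns the categorical pushout defining ${\bigvee\limits_{B}}^{J}X$ into a homotopy pushout, which $(-)^{H}$ then carries to the analogous pushout of fixed spaces.

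Next I identify the two fixed-point spaces combinatorially. Because $(\bigvee_{J}Y)^{H}=\bigvee_{J^{H}}Y^{H}$ and $(-)^{H}$ preserves pushouts along the cofibration $\bigvee_{J}B\to\bigvee_{J}X$, one gets $\big({\bigvee\limits_{B}}^{J}X\big)^{H}={\bigvee\limits_{B^{H}}}^{J^{H}}X^{H}$, the relative wedge over $B^{H}$ indexed by the ordinary finite set $J^{H}$ and built from the split map $p^{H}\colon X^{H}\to B^{H}$. Writing $J\cong\coprod_{\alpha}H/K_{\alpha}$ as an $H$-set, one has $(\prod_{J}Y)^{H}\cong\prod_{\alpha}Y^{K_{\alpha}}$, so applying $(-)^{H}$ to the defining homotopy pullback identifies $\big(\widetilde{\prod\limits_{B}}^{J}X\big)^{H}$ with the homotopy pullback of $B^{H}\to\prod_{\alpha}B^{K_{\alpha}}\leftarrow\prod_{\alpha}X^{K_{\alpha}}$. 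The orbits with $K_{\alpha}=H$ correspond bijectively to the elements of $J^{H}$; evaluating this homotopy pullback first over those orbits produces $\widetilde{\prod\limits_{B^{H}}}^{J^{H}}X^{H}$, leaving one to pull back the cospan indexed by the orbits with $K_{\alpha}\lneqq H$ along $\widetilde{\prod\limits_{B^{H}}}^{J^{H}}X^{H}\to B^{H}\to\prod_{K_{\alpha}\lneqq H}B^{K_{\alpha}}$. Tracing the definitions, on $H$-fixed points the inclusion of wedge into product hits only the coordinates indexed by $J^{H}$ and equals the section $s$ on the others, so the $H$-fixed map factors as
\[{\bigvee\limits_{B^{H}}}^{J^{H}}X^{H}\ \xrightarrow{\ a\ }\ \widetilde{\prod\limits_{B^{H}}}^{J^{H}}X^{H}\ \xrightarrow{\ b\ }\ \big(\widetilde{\prod\limits_{B}}^{J}X\big)^{H},\]
and it is enough to estimate $\conn a$ and $\conn b$ separately, since the connectivity of a composite is at least the minimum of the two.

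The map $a$ is a map of fibrations over $B^{H}$ which on homotopy fibers is the comparison $\bigvee_{J^{H}}F^{H}\to\prod_{J^{H}}F^{H}$, where $F^{H}=\hofib(p^{H})$, so $\conn F^{H}=\conn p^{H}-1$. I would establish the relative form of the classical estimate that the wedge-into-product map of finitely many $c$-connected pointed spaces is $(2c+1)$-connected, by induction on the number of wedge summands $m$: the inductive step compares the homotopy pushout square defining ${\bigvee\limits_{B^{H}}}^{[m]}X^{H}$ out of ${\bigvee\limits_{B^{H}}}^{[m-1]}X^{H}$ and $X^{H}$ (along the sections) with the iterated fiber product over $B^{H}$ defining $\widetilde{\prod\limits_{B^{H}}}^{[m]}X^{H}$, using the Blakers--Massey theorem; splitness of $p^{H}$ keeps the relevant legs $(\conn p^{H}-1)$-connected, and the base cases ($m\leq 1$) are equivalences, so no decay occurs and $a$ is $(2\conn p^{H}-1)$-connected. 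The map $b$ is also a map of fibrations over $B^{H}$, and on homotopy fibers it is the inclusion of $\prod_{J^{H}}F^{H}$ as a slice of $\prod_{J^{H}}F^{H}\times W$ with $W\simeq\prod_{K_{\alpha}\lneqq H}\hofib(p^{K_{\alpha}})$; thus $\hofib(b)\simeq\Omega W$, a product of based loop spaces $\Omega\hofib(p^{K})$ over the proper subgroups $K\lneqq H$ occurring as orbit stabilizers, and counting connectivities gives that $b$ is $\big(\min_{K\lneqq H}\conn p^{K}\big)$-connected (minimising over all proper $K$, not only those that occur, only weakens the bound). Taking the minimum of the two estimates yields $\nu(H)$, proving the lemma.

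The step I expect to be the main obstacle is the relative Blakers--Massey estimate for $a$. One has to run the induction on the number of wedge summands while checking at each stage that the sections $B^{H}\to{\bigvee\limits_{B^{H}}}^{[m-1]}X^{H}$ and $B^{H}\to X^{H}$, which serve as legs of the relevant homotopy cocartesian squares, are $(\conn p^{H}-1)$-connected (again because $p^{H}$ is split), so that the Blakers--Massey comparison never drops below $2\conn p^{H}-1$, and that forming homotopy fibers over $B^{H}$ interacts correctly with these squares. The equivariant input --- the fixed-point formulas for $G$-indexed wedges and products and the two-stage evaluation of the homotopy pullback --- is routine, but it is exactly what manufactures the proper-subgroup term $\min_{K\lneqq H}\conn p^{K}$ in $\nu(H)$.
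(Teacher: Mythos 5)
Your proposal follows essentially the same route as the paper's proof: pass to $H$-fixed points, identify the source with ${\bigvee\limits_{B^H}}^{J^H}X^H$ and the target with a two-stage homotopy pullback (the $J^H$-indexed part first, then the free-orbit part), factor $\iota^H$ as $b\circ a$ through $\widetilde{\prod\limits_{B^H}}^{J^H}X^H$, estimate $a$ by an induction on wedge summands via Blakers--Massey (using splitness of $p^H$ to keep the legs $\conn p^H$-connected), and estimate $b$ by a fiber computation over the proper-subgroup coordinates. All of that matches the paper, and the Blakers--Massey half is fine.

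The one step to flag is your treatment of $b$. You correctly observe that $b$ is a section of the projection $\big(\widetilde{\prod\limits_{B}}^{J}X\big)^H\to\widetilde{\prod\limits_{B^H}}^{J^H}X^H$, whose fiber is $W\simeq\prod_{K_\alpha\lneqq H}\hofib(p^{K_\alpha})$, so that $\hofib(b)\simeq\Omega W$. But from $\hofib(b)\simeq\Omega W$ with $W$ being $\big(\min_{K\lneqq H}\conn p^{K}-1\big)$-connected you only get that $b$ is $\big(\min_{K\lneqq H}\conn p^{K}-1\big)$-connected, not $\min_{K\lneqq H}\conn p^{K}$-connected as you assert; the last inference is a non sequitur from your own (correct) identification. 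Interestingly, the paper's proof asserts instead that $\hofib(b)$ is $W$ itself --- i.e.\ it takes the fiber of the projection rather than of its section, dropping a loop --- and that is how it reaches the stated $\nu(H)$. A test case makes the discrepancy concrete: for $G=J=\mathbb{Z}/2$ acting freely on $J$, $B=\ast$ and $H=G$, the map $\iota^H$ is the basepoint inclusion $\ast\to\big(\prod_JX\big)^{\mathbb{Z}/2}\cong X$, whose homotopy fiber is $\Omega X=\Omega W$, and which is exactly $(\conn p-1)$-connected rather than $\conn p$-connected. So your fiber computation is the right one, the second term of $\nu(H)$ should really read $\min_{K\lneqq H}\conn p^{K}-1$, and your proof as written establishes that (slightly weaker) bound. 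This off-by-one is immaterial everywhere the lemma is used --- in Proposition \ref{wedgesintoprod}, in $W(v,\kappa)$ and in the analyticity arguments the term is always paired with an unspecified additive constant --- but you should either correct the final inference for $b$ or note that you are proving the statement with the second term lowered by one.
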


\begin{proof}[Proof of \ref{connwedgesitoprodtop}]
Let us describe the map $\iota$ on $H$-fixed points
\[\iota^{H}\colon({\bigvee\limits_{B}}^{J}X)^H\longrightarrow (\widetilde{\prod\limits_{B}}^{J}X)^H\]
for every subgroup $H$ of $G$. Homotopy limits (which are defined by the Bousfield-Kan formula) and pushouts commute with fixed points. The source of $\iota^H$ is then homeomorphic to the space ${\bigvee\limits_{B^H}}^{J^H}X^H$.
The target of $\iota^H$ is the homotopy pullback
\[\xymatrix@R=13pt{(\widetilde{\prod\limits_{\substack{B\\ \phantom{a}\\ \phantom{a} }}}^{J}X)^H \ar[r]&(\prod_JX)^{H}\cong\prod\limits_{[j]\in J/H}X^{H_j}\cong (\prod\limits_{J^H}X^{H})\times\prod\limits_{\substack{[j]\in J/H\\|[j]|\geq 2}}X^{H_j}\\
B^{H}_{\substack{\phantom{B}\\\phantom{B}\\ \phantom{B}\\ \phantom{B} }}\ar[r]_-{\Delta}\ar@{<-}[u(.7)]&(\prod_JB)^{H}\cong\prod\limits_{[j]\in J/H}B^{H_j} \cong (\prod\limits_{J^H}B^{H})\times\prod\limits_{\substack{[j]\in J/H\\|[j]|\geq 2}}B^{H_j}\ar@{<-}@<-12ex>[u(.7)]
}\]
where $H_j=\{h\in H|hj=j\}$ is the stabilizer group of $j$ in $H$. The products on the right-hand side range over a set of representatives for the equivalence classes in the quotient $J/H$. We choose the same set of representatives for the products of $B$ and $X$. The map $\iota^H$ factors as
\[{\bigvee\limits_{B^H}}^{J^H}X^H\rightarrow
\widetilde{\prod\limits_{B^H}}^{J^H}\!\!\!X^H=
\holim\left(
\vcenter{\hbox{\xymatrix@R=13pt{ B^{H}\ar[d]\\\prod\limits_{J^H}B^H\\\prod\limits_{J^H}X^H\ar[u]}}}\right)\rightarrow
\holim\left(
\vcenter{\hbox{\xymatrix@R=10pt{B^{H}\ar@<-1ex>[d]\\(\prod\limits_{J^H}B^{H})\times\!\!\!\!\prod\limits_{\substack{[j]\in J/H\\|[j]|\geq 2}}B^{H_j}\\(\prod\limits_{J^H}X^{H})\times\!\!\!\!\prod\limits_{\substack{[j]\in J/H\\|[j]|\geq 2}}X^{H_j}\ar@<1ex>[u(.6)]}}}\right)
\cong
(\widetilde{\prod\limits_{B}}^{J}X)^H
\]
The first map is the canonical inclusion. It is $2\conn p^H-1$ connected by the Blakers-Massey theorem (see e.g. \cite[2.3]{calcII}), applied inductively to the homotopy pushout square of spaces (without any group action)
\[\xymatrix{X^H\vee_{B^H}X^H\ar[r]\ar[d]&X^H\ar[d]\\
X^H\ar[r]&B^H
}\]
Notice that the pinch map $X^H\vee_{B^H}X^H\rightarrow X^H$ is at least as connected as $p^H\colon X^H\rightarrow B^H$. The second map is induced on homotopy limits by the inclusions in the first product factors. Its homotopy fiber is equivalent to the homotopy limit
\[\holim\bigg( \ast\longrightarrow\prod\limits_{\substack{[j]\in J/H\\|[j]|\geq 2}}B^{H_j}\longleftarrow \prod\limits_{\substack{[j]\in J/H\\|[j]|\geq 2}}X^{H_j}\bigg)\cong\prod\limits_{\substack{[j]\in J/H\\|[j]|\geq 2}}\hofib p^{H_j}\]
which is $(\min_{K\lneqq H}\conn p^K-1)$-connected. Then $\iota^H$ is $\nu$-connected, where $\nu$ is the minimum of the two connectivities $\nu(H)=\min\{2\conn p^H-1,\min_{K\lneqq H}\conn p^K\}$.
\end{proof}


\subsection{Enriched homotopy functors and assembly maps}\label{secsetup}

Let $G$ be a finite group, and let $B$ be a pointed finite simplicial $G$-set. Consider the category $G$-$\mathcal{S}_B$ of finite retractive simplicial $G$-sets over $B$. An object of $G$-$\mathcal{S}_B$ is a triple $(X,p,s)$ of a finite simplicial $G$-set $X$, an equivariant simplicial map $p\colon X\rightarrow B$ and an equivariant section $s\colon B\rightarrow X$ of $p$. We remark that $s$ is in particular a cofibration in the fixed points model structure of pointed simplicial $G$-sets, see e.g. \cite[1.2]{Shipley}. A morphism of $G$-$\mathcal{S}_B$ is an equivariant map that commutes with both the projections and the sections. The category $G$-$\mathcal{S}_B$ admits an enrichment in $G$-$\Top_\ast$.
The space of morphisms $\map_B(X,Y)$ from $(X,p_X,s_X)$ to $(Y,p_Y,s_Y)$ is the geometric realization of the simplicial subset $\map_B(X,Y)_{\sbt}$ of the simplicial mapping space $\map(X,Y)_{\sbt}$ of relative maps. Its $p$-simplices are the simplicial maps $f\colon X\times\Delta[p]\to Y$ for which the squares
\[\xymatrix{X\times \Delta[p]\ar[r]^-f\ar[d]_{p_X\times \id}&Y\ar[d]^{p_Y}\\
B\times \Delta[p]\ar[r]&B
} \ \ \ \ \ \ \ \ \ \ \ \ \ \ \ \ \ \xymatrix{X\times \Delta[p]\ar[r]^-f&Y\\
B\times \Delta[p]\ar[u]^{s_X\times \id}\ar[r]&B\ar[u]_{s_Y}
}\]
commute. Here the bottom horizontal maps are projections, and $\Delta[p]$ is the simplicial $p$-simplex. The group $G$ acts simplicially on $\map_B(X,Y)_{\sbt}$ by conjugation, inducing a $G$-action on the geometric realization $\map_B(X,Y)$. If $G=\{1\}$ is the trivial group, we write $\mathcal{S}_B$ for the category $\{1\}$-$\mathcal{S}_B$.

We are interested in studying ($G$-$\Top_ \ast$)-enriched functors
\[\Phi\colon G\mbox{-}\mathcal{S}_B\longrightarrow \Sp^{\Sigma}_G\]
which arise from $\Top_ \ast$-enriched functors $\Psi\colon \mathcal{S}_B\rightarrow \Sp^{\Sigma}_G$ via the following construction. Given an object $(X,p,s)$ of $G$-$\mathcal{S}_B$, regard the action maps as endomorphisms $g\colon (X,p,s)\rightarrow (X,p,s)$ in $\mathcal{S}_B$. By functoriality, the $G$-spectrum $\Psi(X)$ inherits an extra $G$-action by the maps $\Psi(g)$, and the diagonal actions
\[\Psi(X)_n\stackrel{g}{\longrightarrow}\Psi(X)_n\stackrel{\Psi(g)}{\longrightarrow}\Psi(X)_n\]
define a new $G$-spectrum $\overline{\Psi}(X)$. This construction extends $\Psi$ to a functor $\Phi=\overline{\Psi}\colon G\mbox{-}\mathcal{S}_B\rightarrow \Sp^{\Sigma}_G$. The technical advantage of a functor $\Phi$ of this form, is that it can be evaluated at retractive spaces over $B$ that have only an action of a subgroup $H$ of $G$. Compose $\Psi\colon \mathcal{S}_B\rightarrow \Sp^{\Sigma}_G$ with the restriction functor to $H$-spectra $\Sp^{\Sigma}_H$ and extend it to a functor $H\mbox{-}\mathcal{S}_B\rightarrow \Sp^{\Sigma}_H$ in the above fashion. Since $\Psi$ is enriched, this sends $H$-equivariant simplicial homotopy equivalences to $H$-equivariant homotopy equivalences of symmetric $H$-spectra. An $H$-equivariant map in $H\mbox{-}\mathcal{S}_B$ is an $H$-equivalence if the underlying map of simplicial $H$-sets is a weak equivalence in the fixed point model structure.

\begin{defn}\label{htpyfctr}
An enriched functor $\Phi\colon G\mbox{-}\mathcal{S}_B\rightarrow \Sp^{\Sigma}_G$ is a homotopy functor if it is extended from a functor $\Psi\colon \mathcal{S}_B\rightarrow \Sp^{\Sigma}_G$ as explained above, such that the corresponding extension $H\mbox{-}\mathcal{S}_B\rightarrow \Sp^{\Sigma}_H$ of $\Psi$ sends weak equivalence of simplicial $H$-sets to $H$-equivalences of symmetric $H$-spectra, for every subgroup $H$ of $G$.
\end{defn}

\begin{ex}
The following are examples of homotopy functors $G\mbox{-}\mathcal{S}_\ast\rightarrow \Sp^{\Sigma}_G$.
\begin{itemize}
\item For a fixed $G$-spectrum $E$ in $\Sp^{\Sigma}_G$, the functor $E\wedge|-|\colon G\mbox{-}\mathcal{S}_\ast\rightarrow \Sp^{\Sigma}_G$ that sends a finite pointed simplicial set $X$ to the spectrum $E\wedge|X|$ with diagonal action, where $|X|$ is the geometric realization of $X$.
\item For a fixed simplicial $G$-set $K$, the functor $\map_\ast(K,-)\wedge\mathbb{S}^G\colon G\mbox{-}\mathcal{S}_\ast\rightarrow \Sp^{\Sigma}_G$ that sends $X$ to the suspension spectrum of the pointed mapping space $\map_\ast(K,X)$ with conjugation action.
\end{itemize}
An example of a functor $G\mbox{-}\mathcal{S}_\ast\rightarrow \Sp^{\Sigma}_G$ that is not the extension of a functor $\mathcal{S}_\ast\rightarrow \Sp^{\Sigma}_G$ is the the functor that sends a based simplicial $G$-set $X$ to the suspension spectrum of the orbit space of $X$ with trivial $G$-action $(X/G)\wedge \mathbb{S}^G$.
\end{ex}

\begin{ex}\label{extendfctrs} Most of the enriched homotopy functors that we will encounter are induced from functors $Set_{\ast}^f\to \Sp^{\Sigma}_G$ from the category $Set_{\ast}^f$ of finite pointed sets. Extend a functor $F\colon Set_{\ast}^f\to \Sp^{\Sigma}_G$ to an enriched functor $F\colon \mathcal{S}_\ast\rightarrow \Sp^{\Sigma}_G$ as follows. Given a finite pointed simplicial set $X$, we denote $F(X_k)_n$ the $n$-th space of the symmetric $G$-spectrum $F(X_k)$. The simplicial structure on $X$ induces a simplicial space structure on $F(X_{\sbt})_n$ by functoriality of $F$, that respects the $(\Sigma_n\times G)$-action. Denote $F(X)_n$ its thick geometric realization. The structure maps of the spectra $F(X_k)$ induce the structure maps of a $G$-spectrum $F(X)$, defined by
\[F(X)_n\wedge S^{m\rho}\cong|F(X_{\sbt})_n\wedge S^{m\rho}|\stackrel{|\sigma_n|}{\xrightarrow{\hspace*{1cm}}}F(X)_{n+m}\]
The symmetric structure is defined in a similar way.
This defines the functor $\mathcal{S}_\ast\rightarrow \Sp^{\Sigma}_G$ on objects.
The components of the map $F\colon |\map_\ast(X,Y)_{\sbt}|{\to} \Sp_{G}^{\Sigma}\big(F(X),F(Y)\big)$
defining $F$ on morphism spaces are the composites
\[ |\map_\ast(X,Y)_{\sbt}|{\longrightarrow} |G\mbox{-}\map_\ast\big(F(X)_n,F(Y)_n\big)_{\sbt}|\stackrel{|-|}{\xrightarrow{\hspace*{1cm}}} G\mbox{-}\Top_\ast\big(F(X)_n,F(Y)_n\big)\]
where the second map takes a map of simplicial spaces to its geometric realization, and the first map is the geometric realization of the map of simplicial spaces \[F_n\colon\map_\ast(X,Y)_{\sbt}\longrightarrow G\mbox{-}\map_\ast\big(F(X)_n,F(Y)_n\big)_{\sbt}\] given in simplicial degree $k$ by sending $f_{\sbt}\colon X\times\Delta[k]\rightarrow Y$ to the simplicial $G$-map $F_{n}(f_{\sbt})\colon F(X)_n\times\Delta[k]\rightarrow F(Y)_n$ defined in degree $p$ by
\[F_{n}(f_{\sbt})\big(z\in F(X_p)_n,\sigma\in \Delta[k]_p\big)=F\big(f_p(-,\sigma)\big)(z)\]
This defines a functor $F\colon\mathcal{S}_\ast\rightarrow \Sp^{\Sigma}_G$ which is further extended to $F\colon G\mbox{-}\mathcal{S}_\ast\rightarrow \Sp^{\Sigma}_G$ as explained before Definition \ref{htpyfctr}.
\end{ex}

We end the section by discussing the assembly map of an enriched homotopy functor. The category $G\mbox{-}\mathcal{S}_B$ has a symmetric monoidal structure defined by an internal smash product. The smash product of two objects $(X,p_X,s_X)$ and $(Y,p_Y,s_Y)$ is the retractive space $(X\wedge_B Y,p,s)$ defined as the pushout of simplicial $G$-sets
\[\xymatrix{X\vee_BY\ar[r]\ar[d]&X\times_BY\ar[d]\\
B\ar[r]&X\wedge_B Y
}\]
with the obvious maps $p$ and $s$ to and from $B$.
Notice that the coproduct $X\vee_B Y$ is defined using the sections $s_X$ and $s_Y$.
An enriched functor $\Phi\colon G\mbox{-}\mathcal{S}_B\rightarrow \Sp^{\Sigma}_G$ has an associated assembly map 
\[A^{X}_K\colon \Phi(X)\wedge |K|\longrightarrow \Phi(X\wedge_B(K\times B))\]
where $(X,p,s)$ is an object of $G\mbox{-}\mathcal{S}_B$ and $K$ is a finite pointed simplicial $G$-set (the smash product of a spectrum with a space is levelwise). It is adjoint to the composite
\[|K|\longrightarrow| \map_B\big(X,X\wedge_B(K\times B)\big)_{\sbt}|\stackrel{\Phi}{\longrightarrow} \Sp^{\Sigma}\big(\Phi(X),\Phi(X\wedge_B(K\times B))\big)\]
where the first map is the realization of the adjoint of the identity on $X\wedge_B (K\times B)$. It sends a simplex $k$ of $K$ to the map that sends $x$ to the element of the smash product determined by  $(x,k,p(x))\in X\times_B K\times B$.

\begin{rem}\label{assandposquare}
Consider the particular case where $K=S^1=\Delta[1]/\partial$ is the simplicial circle with trivial $G$-action, and suppose that $\Phi(B)$ is a weakly contractible $G$-spectrum. The adjoint of the assembly map is a map
\[\widetilde{A}^{X}_{S^1}\colon\Phi(X)\longrightarrow \Omega\Phi(X\wedge_B(S^1\times B))\]
Let $C_{p}=X\wedge_B(B\times I)$ be the mapping cylinder of the projection $p\colon X\rightarrow B$.
The universal map from $\Phi(X)$ into the homotopy limit of the rest of the square
\[\xymatrix{\Phi(X)\ar[d]\ar[r]&\Phi(C_{p})\ar[d]\\
\Phi(C_{p})\ar[r]&\Phi(X\wedge_B(S^1\times B))
}\]
is the top horizontal map in the $G$-homotopy commutative diagram
\[\xymatrix{
\Phi(X)\ar[r]\ar[d]_{\widetilde{A}^{X}_{S^1}}&\holim\big(\Phi(C_{p})\rightarrow \Phi(X\wedge_B\!(S^1\times B))\leftarrow \Phi(C_{p})\big)\\
\Omega\Phi(X\wedge_B(S^1\times B))\ar@{=}[r] &\holim\big(\ast\rightarrow \Phi(X\wedge_B\!(S^1\times B))\leftarrow \ast\big)\ar[u]_{\simeq}
}\]
A $G$-homotopy between the two maps in induced by a $G$-contraction $C_p\simeq B$. Therefore the square above is homotopy cartesian precisely when the adjoint assembly map for the circle is a $G$-equivalence. Moreover iterating this homotopy limit construction gives a map corresponding to the assembly map $A^{X}_{S^n}\colon \Phi(X)\wedge S^n\rightarrow \Phi(X\wedge_B(S^n\times B))$ for the $n$-sphere.
\end{rem}

\begin{rem}
In his first calculus paper \cite{calcI} Goodwillie works in the category $\mathcal{S}/_B$ of spaces (or simplicial sets) over $B$. The objects of $\mathcal{S}/_B$ are maps $p\colon X\to B$ that do not necessarily admit a section. The category $\mathcal{S}/_B$ does not have a zero object, and its suspension functor is not adjoint to an internal $\hom$ object. We find it technically convenient to work with the category of retractive spaces $\mathcal{S}_B$ (and its equivariant analogue $G\mbox{-}\mathcal{S}_B$) which enjoys these extra categorical properties. A disadvantage will emerge in Theorem $B$, that will apply only to split-surjective maps. This restriction will however not affect the applications, as we are ultimately interested in the map $X\to \ast$ (see Corollary \ref{corA}).
\end{rem}


\section{Elements of equivariant calculus}
Let $G$ be a finite group, let $G\mbox{-}\mathcal{S}_B$ be the category of finite retractive simplicial $G$-sets over $B$ from \S\ref{secsetup}, and let $\Sp^{\Sigma}_G$ be the category of symmetric $G$-spectra introduced in \S\ref{spectrasec}. Both categories are enriched over the category of pointed $G$-spaces $G$-$\Top_\ast$. In this section we develop a theory of calculus for ($G$-$\Top_\ast$)-enriched homotopy functors $\Phi\colon G\mbox{-}\mathcal{S}_B\to \Sp^{\Sigma}_G$ (see Definition \ref{htpyfctr}) based on Blumberg's definition of equivariant linearity \cite[3.3]{Blumberg}. The main result is Theorem $B$ proved in Section \ref{secanalytic}, analogous to Goodwillie's Corollary \cite[5.4]{calcII} that ``functors with zero derivative are locally constant''.

\subsection{$G$-linear functors}
The following definition of $G$-linearity for functors $G\mbox{-}\mathcal{S}_B\rightarrow \Sp^{\Sigma}_G$ is analogous to Blumberg's definition of $G$-linearity for endofunctors of pointed $G$-spaces from \cite[3.3]{Blumberg}, in the case when the group $G$ is finite.

\begin{defn}\label{Glin}
A homotopy functor $\Phi\colon G\mbox{-}\mathcal{S}_B\rightarrow \Sp^{\Sigma}_G$ is $G$-linear if
\begin {enumerate}[label=\emph{\arabic*})]
\item It is reduced, that is $\Phi(B)$ is a weakly $G$-contractible spectrum,
\item It sends homotopy cocartesian squares to homotopy cartesian squares,
\item For every finite $G$-set $J$ and $(X,p,s)$ in $G\mbox{-}\mathcal{S}_B$, the canonical map
\[\Phi(X\wedge_B(J_+\times B))\longrightarrow \prod_{J}\Phi(X)\]
is a $G$-equivalence.
\end{enumerate}
\end{defn}
The above map is adjoint to the composite
\[J\rightarrow |\map_B\big({\bigvee_{B}}^JX,X\big)_{\sbt}|\cong|\map_B\big(X\wedge_B(J_+\times B),X\big)_{\sbt}| \stackrel{\Phi}{\longrightarrow} \Sp^{\Sigma}\big(\Phi(X\wedge_B(J_+\times B)),\Phi(X)\big) \]
where the first map sends $j$ to the projection
\[\pr_j(x,i)=\left\{\begin{array}{ll}
x&\mbox{ if $i=j$}\\
sp(x)&\mbox{ if $i\neq j$}
\end{array}\right.\]

\begin{ex} For every symmetric $G$-spectrum $E$, the functor $E\wedge|-|\colon G\mbox{-}\mathcal{S}_\ast\rightarrow \Sp^{\Sigma}_G$ is $G$-linear. The condition on squares holds by Remark \ref{fibhofib}, and because the smash product of well-pointed spaces preserves cofibrations and pushouts. The last condition is a consequence of the Wirthm\"{u}ller isomorphism theorem (see also \ref{wedgesintoprod}).
\end{ex}

\begin{ex}\label{DoldThom}
Given a topological Abelian group $M$ with additive $G$-action and a finite pointed set $X$, define a pointed $G$-space
\[M(X)=\big(\bigoplus_{x\in X} (M\cdot x)\big)/_{M\cdot\ast}\]
where $G$ acts diagonally on the $M$-components.
This construction is functorial in $X$ by sending a map of sets $f\colon X\to Y$ to the map $f_\ast\colon M(X)\to M(Y)$ defined by the formula
\[f_\ast(\{m_x\})_y=\sum_{x\in f^{-1}(y)}m_x\]
It extends levelwise to a $(G\mbox{-}\Top_\ast)$-enriched homotopy functor $M(-)\colon G\mbox{-}\mathcal{S}_\ast\rightarrow G\mbox{-}\Top_\ast$ by the construction of Example \ref{extendfctrs}. The functor $M(-)$ is the equivariant Dold-Thom construction, and its equivariant homotopy groups are Bredon homology.
Adapting the definition of $G$-linearity to $G$-space valued functors, we say that a reduced homotopy functor  $F\colon G\mbox{-}\mathcal{S}_\ast\rightarrow G\mbox{-}\Top_\ast$ is $G$-linear if it sends homotopy cocartesian squares to homotopy cartesian squares, and if the canonical map
\[F(\bigvee_J X)\longrightarrow \prod_JF(X)\]
is a weak equivalence of $G$-spaces (in the fixed points model structure) for every finite $G$-set $J$. We show in Appendix \ref{confGlin} that $M(-)\colon G\mbox{-}\mathcal{S}_\ast\rightarrow G\mbox{-}\Top_\ast$ is a $G$-linear homotopy functor.
\end{ex}

\begin{prop}\label{linearsmashspectrum}
Let $\Phi\colon G\mbox{-}\mathcal{S}_B\rightarrow  \Sp^{\Sigma}_G$ be a $G$-linear homotopy functor. The assembly map
\[A^{X}_{K}\colon \Phi(X)\wedge |K|\longrightarrow \Phi(X\wedge_B(K\times B))\]
is a $G$-equivalence for every object $K$ of $G$-$\mathcal{S}_\ast$. In particular for $X=S^0\times B$ there is a  $G$-equivalence
\[\Phi(S^0\times B)\wedge |K|\stackrel{\simeq}{\longrightarrow} \Phi(K\times B)\]
\end{prop}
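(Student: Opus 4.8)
The plan is to prove the assembly map $A^X_K\colon \Phi(X)\wedge|K|\to\Phi(X\wedge_B(K\times B))$ is a $G$-equivalence by a double induction: first reducing to $K$ a finite pointed $G$-set via simplicial skeleta, and then reducing to $K=J_+$ a free-orbit-wedge via cell attachment and condition 2) of $G$-linearity. Concretely, I would first observe that for $K=J_+$ with $J$ a finite $G$-set, the assembly map is precisely the canonical map $\Phi(X)\wedge(J_+)\to\Phi(X\wedge_B(J_+\times B))$; since $\Phi(X)\wedge(J_+)=\bigvee_J\Phi(X)$ and the inclusion $\bigvee_J\Phi(X)\to\prod_J\Phi(X)$ is a $\pi_\ast$-equivalence (Wirthm\"uller, as recalled after Remark \ref{fibhofib}), condition 3) of Definition \ref{Glin} gives that $A^X_{J_+}$ is a $G$-equivalence.

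Next I would bootstrap from finite $G$-sets to arbitrary finite pointed simplicial $G$-sets $K$ by induction on the number of nondegenerate simplices, using the skeletal filtration. For the inductive step, attaching a nondegenerate $G$-orbit of $n$-simplices to the $(n-1)$-skeleton $K^{(n-1)}$ realizes $|K^{(n)}|$ as a homotopy pushout of the form
\[\xymatrix{(G/H)_+\wedge S^{n-1}\ar[r]\ar[d]&|K^{(n-1)}|\ar[d]\\ (G/H)_+\wedge D^n\ar[r]&|K^{(n)}|}\]
for appropriate subgroups $H$. Smashing with $\Phi(X)$ preserves homotopy cocartesian squares (smash with a fixed spectrum is a left adjoint, and the square is built from cofibrations of well-pointed spaces), and applying $\Phi$ to the corresponding homotopy cocartesian square in $G\mbox{-}\mathcal{S}_B$ yields a homotopy cartesian — hence, by Remark \ref{fibhofib}, also homotopy cocartesian — square of $G$-spectra by condition 2). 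Since the assembly map is natural and compatible with these pushout decompositions (the assembly construction commutes with the relevant colimits in $K$, as one checks from its explicit description as the realization of the adjoint of the identity), the map of squares is a $G$-equivalence on three corners by the inductive hypothesis and the $J_+$ case — applied to $K$ equal to $(G/H)_+\wedge S^{n-1}$, $(G/H)_+\wedge D^n\simeq (G/H)_+$, and $|K^{(n-1)}|$ — and therefore a $G$-equivalence on the fourth corner $|K^{(n)}|$. The base case $K^{(0)}$ is a wedge $(G/H_1)_+\vee\cdots\vee(G/H_r)_+$ of orbits, handled by the first step.

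The main obstacle I anticipate is not any single step but the bookkeeping needed to make the assembly map genuinely natural and colimit-compatible in the variable $K$: one must verify that $A^X_{-}$ is a natural transformation of functors $G\mbox{-}\mathcal{S}_\ast\to\Sp^{\Sigma}_G$ (with $X$ fixed) that commutes with geometric realization and with the pushouts of representable cells used above, and that the square of spectra obtained by applying $\Phi$ really is homotopy cocartesian (using $\Phi$ of a homotopy pushout is homotopy pullback, then Remark \ref{fibhofib} to flip it back). The special case $X=S^0\times B$ is then immediate: $S^0\times B$ is the unit of $\wedge_B$, so $(S^0\times B)\wedge_B(K\times B)\cong K\times B$, and $\Phi(S^0\times B)\wedge|K|\to\Phi(K\times B)$ is exactly $A^{S^0\times B}_K$, already shown to be a $G$-equivalence.
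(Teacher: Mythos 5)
Your strategy is essentially the one the paper uses: base case $K=J_+$ via condition 3) together with the Wirthm\"{u}ller equivalence $\bigvee_J\Phi(X)\to\prod_J\Phi(X)$, then skeletal induction using that $\Phi$ turns homotopy cocartesian squares into homotopy cartesian, hence by Remark \ref{fibhofib} homotopy cocartesian, squares of spectra. The one substantive difference is the organization of the inductive step. The paper uses the cofiber sequence $K^{(n-1)}\to K^{(n)}\to S^n\wedge J_+$, so the only new input is the assembly map for $S^n\wedge J_+$; this is reduced to $A^X_{S^n}$, which is proved by identifying its $n$-fold loop with the iterated adjoint assembly map and invoking Remark \ref{assandposquare}. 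You instead use the attaching-map pushout, whose extra corner is $(G/H)_+\wedge(\partial\Delta[n])_+$ rather than a point, and that corner exposes the flaw in your induction scheme: $(\partial\Delta[n])_+$ is not a subcomplex of $K$, and for $n\geq 2$ it can have \emph{more} nondegenerate simplices than $K^{(n)}$ itself (take $K=(G/H)_+\wedge(\Delta[n]/\partial)$, with $1+|G/H|$ nondegenerate simplices against $1+(2^{n+1}-2)|G/H|$ for the corner), so ``induction on the number of nondegenerate simplices'' does not close. The fix is to induct on dimension first: up to $G$-homotopy equivalence all corners other than $K^{(n)}$ are complexes of dimension $<n$ (using $(G/H)_+\wedge\Delta[n]_+\simeq (G/H)_+$), and with that change your argument goes through and is in fact a mild repackaging of the paper's sphere step --- the homotopy cartesianness of $\Phi$ applied to $(\partial\Delta[n])_+\to\Delta[n]_+\to S^n$ plays exactly the role that the mapping-cylinder square of Remark \ref{assandposquare} plays in the paper. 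The remaining bookkeeping you flag (naturality of $A^X_{-}$ in $K$, preservation of cofibrations and pushouts by $X\wedge_B(-\times B)$, and homotopy invariance in $K$ so that $\Delta[n]_+$ may be replaced by $S^0$) is genuine but routine, and the paper performs the same checks implicitly.
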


\begin{proof}
The proof is by induction on the skeleton of $K$. The base inductive step is when $K=J_+$ is a finite pointed $G$-set. In this case there is a commutative diagram
\[\xymatrix{\Phi(X)\wedge J_+\ar[r]^-{A^{X}_{J_+}}\ar@{=}[d]&\Phi(X\wedge_B(J_+\times B))\ar[d]^\simeq\\
\bigvee_J\Phi(X)\ar[r]_{\simeq}&\prod_J\Phi(X)
}\]
where the bottom map is a $G$-equivalence by the Wirthm\"{u}ller isomorphism theorem (see also Proposition \ref{wedgesintoprod}).
Suppose inductively that the assembly map is an equivalence for the $(n-1)$-skeleton of $K$. The inclusion of the $(n-1)$-skeleton into the $n$-skeleton gives a cofiber sequence 
\[K^{(n-1)}\longrightarrow K^{(n)}\longrightarrow S^n\wedge J_+\]
for some finite $G$-set $J$, inducing a homotopy cocartesian square
\[\xymatrix{K^{(n-1)}\times B\ar[d]\ar[r]& K^{(n)}\times B\ar[d]\\
B\ar[r]&(S^n\wedge J_+)\times B
}\]
in $ G\mbox{-}\mathcal{S}_B$ (the horizontal maps are cofibrations with isomorphic cofibers). Since $\Phi$ is $G$-linear, the image of this square defines a fiber sequence
\[\Phi\big(X\wedge_B(K^{(n-1)}\times B)\big)\longrightarrow \Phi\big(X\wedge_B(K^{(n)}\times B)\big)\longrightarrow \Phi\big(X\wedge_B((S^n\wedge J_+)\times B)\big)\]
This sequence receives an assembly map from the (co)fiber sequence of symmetric $G$-spectra
\[\Phi(X)\wedge|K^{(n-1)}|\longrightarrow \Phi(X)\wedge |K^{(n)}|\longrightarrow \Phi(X)\wedge |S^n\wedge J_+|\]
In view of the long exact sequence in homotopy groups induced by these fiber sequences, it is enough by the inductive hypothesis to show that the assembly map for an indexed wedge of spheres $S^n\wedge J_+$ is an equivalence. This is the top horizontal map in the diagram
\[\xymatrix{ \Phi(X)\wedge |S^n\wedge J_+|\ar@{=}[d]\ar[r]&\Phi\big(X\wedge_B((S^n\wedge J_+)\times B )\big)\ar@{=}[d]\\
\bigvee_J\big( \Phi(X)\wedge |S^n|\big)\ar[r]\ar[d]^{\simeq}&
\Phi\big(X\wedge_B(S^n\times B)\wedge_B(J_+\times B)\big)\ar[d]^\simeq\\
\prod_J\big( \Phi(X)\wedge |S^n|\big)\ar[r]_-{\prod_J A}&\prod_J \Phi\big(X\wedge_B(S^n\times B)\big)
}\]
The bottom map $\prod_J A$ is the product of the assembly maps for $S^n$. It is an equivalence as $A$ fits in a commutative diagram
\[\xymatrix@C=30pt{ \Omega^n(\Phi(X)\wedge |S^n|)\ar[r]^-{\Omega^n A}& \Omega^n\Phi(X\wedge_B(S^n\times B))\\
\Phi(X)\ar[u]^\simeq\ar[ur]_\simeq
}\]
The diagonal map is adjoint to the assembly map, and it is a $G$-equivalence by linearity of $\Phi$ (see Remark \ref{assandposquare}).
\end{proof}
In particular when $B$ is the point, a $G$-linear functor $\Phi\colon G\mbox{-}\mathcal{S}_\ast\rightarrow  \Sp^{\Sigma}_G$ is determined by its value on the $0$-sphere.

\begin{cor}\label{corlinearsmashspectrum}
A $G$-linear homotopy functor $\Phi\colon G\mbox{-}\mathcal{S}_\ast\rightarrow  \Sp^{\Sigma}_G$ is naturally equivalent to the functor $\Phi(S^0)\wedge|-|\colon G\mbox{-}\mathcal{S}_\ast\rightarrow  \Sp^{\Sigma}_G$.
\end{cor}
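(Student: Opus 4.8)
The plan is to take the promised natural equivalence to be the assembly map itself, specialized to $B=\ast$ and $X=S^0$. First I would unwind the conventions of Section~\ref{secsetup} in the case $B=\ast$: there $G\mbox{-}\mathcal{S}_\ast$ is the category of finite pointed simplicial $G$-sets, the relative smash $\wedge_B$ is the ordinary smash product, and for any $K$ there is an isomorphism $S^0\wedge_\ast(K\times\ast)\cong K$, natural in $K$. Under this identification the assembly map of the enriched functor $\Phi$ becomes a map $A_K\colon \Phi(S^0)\wedge|K|\to\Phi(K)$, where $\Phi(S^0)$ denotes $\Phi$ evaluated at the retractive object $S^0=S^0\times\ast$, and the levelwise smash $\Phi(S^0)\wedge|K|$ is the functor appearing in the statement.

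Next I would verify that $K\mapsto A_K$ is a natural transformation of functors $G\mbox{-}\mathcal{S}_\ast\to\Sp^{\Sigma}_G$ from $\Phi(S^0)\wedge|-|$ to $\Phi(-)$. This is formal from the description of the assembly map as the adjoint of the composite $|K|\to|\map_\ast(S^0,K)_{\sbt}|\xrightarrow{\Phi}\Sp^{\Sigma}\big(\Phi(S^0),\Phi(K)\big)$, where the first map is the realization of the adjoint of the identity on $K$: a morphism $f\colon K\to K'$ in $G\mbox{-}\mathcal{S}_\ast$ induces $\Phi(f)$, and the naturality square commutes because postcomposition with $f$ agrees with precomposition by $f$ at the level of the defining adjoints of the identities on $K$ and $K'$. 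Equivalently, one invokes that the assembly map of any enriched functor is automatically natural in the space variable.

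Finally, the last assertion of Proposition~\ref{linearsmashspectrum}, applied with $B=\ast$, states precisely that for a $G$-linear $\Phi$ the map $A_K\colon\Phi(S^0)\wedge|K|\to\Phi(K)$ is a $G$-equivalence for every finite pointed simplicial $G$-set $K$. Hence $A_{(-)}$ is a natural transformation which is objectwise a $\pi_\ast$-equivalence, which is exactly the statement that $\Phi$ is naturally equivalent to $\Phi(S^0)\wedge|-|$. The only point requiring care is the bookkeeping around $B=\ast$ — ensuring that the isomorphism $S^0\wedge_\ast(K\times\ast)\cong K$ is natural in $K$, so that it transports $A_{(-)}$ to a transformation landing in $\Phi(-)$ rather than in $\Phi\big((-)\times\ast\big)$ — but this is forced by the conventions already fixed in Section~\ref{secsetup}, and there is no further homotopical input needed beyond Proposition~\ref{linearsmashspectrum}, whose proof already contains all the real content.
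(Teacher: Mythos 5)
Your proposal is correct and is exactly the argument intended by the paper, which states the corollary with no separate proof as an immediate specialization of Proposition \ref{linearsmashspectrum} to $B=\ast$ and $X=S^0$, the natural equivalence being the assembly map. The naturality check and the identification $S^0\wedge_\ast(K\times\ast)\cong K$ that you spell out are the only (formal) points left implicit in the paper.
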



\subsection{Stable $G$-excision and the $G$-differential}

We generalize the definitions of stable excision and of the differential from Goodwillie calculus (\cite{calcI} and \cite{calcII}) to our equivariant setting. We prove that the differential of a stably $G$-linear functor is $G$-linear, and we compute the connectivity of the $G$-linear approximation map. Here $G$ denotes as usual a finite group.

We remind the reader that an $n$-cube in a category $C$ is a functor $\chi\colon\mathcal{P}(\underline{n})\rightarrow C$, where $\underline{n}=\{1,\dots,n\}$ is the set with $n$-elements and $\mathcal{P}(\underline{n})$ is the poset category of subsets of $\underline{n}$ ordered by inclusion. When $C$ is the category $G\mbox{-}\mathcal{S}_B$, we say that $\chi$ is strongly homotopy cocartesian if all of its $2$-dimensional faces are homotopy cocartesian squares. When $C$ is the category of $G$-spectra $\Sp^{\Sigma}_G$, we say that 
$\chi$ is $\nu$-homotopy cartesian, with respect to a function $\nu\colon \{H\leq G\}\rightarrow \mathbb{Z}$, if the canonical map
\[\chi_{\emptyset}\longrightarrow\holim_{\mathcal{P}(\underline{n})\backslash{\emptyset}}\chi\]
is $\nu$-connected (see Definition \ref{spGconn}).
We denote $e_i\colon\chi_{\emptyset}\rightarrow\chi_{\{i\}}$ the initial maps of $\chi$, for all $i$ in $\underline{n}$.

Given a finite $G$-set $J$ and a $G$-map $p\colon E\rightarrow B$ of symmetric $G$-spectra we recall from \S\ref{spectrasec} that  $\widetilde{\prod\limits_{B}}^{J}E$ is defined as the homotopy pullback
\[\xymatrix@=15pt{\widetilde{\prod\limits_{B}}^{J}E\ar[r]\ar[d]&\prod_JE\ar[d]^{\prod_J p}\\
B\ar[r]_-{\Delta}&\prod_JB
}\]

\begin{defn}\label{defstablin}
Let $\Phi\colon G\mbox{-}\mathcal{S}_B\rightarrow \Sp^{\Sigma}_G$ be a homotopy functor, and let $\kappa,v\colon \{H\leq G\}\rightarrow\mathbb{Z}$ and $c\colon \{H\leq G\}\rightarrow\mathbb{Q}$ be functions which are invariant on conjugacy classes.
\begin{enumerate}[label=\emph{\arabic*})]
\item We say that $\Phi$ satisfies $E^{G}_n(c,\kappa)$ if it sends strongly homotopy cocartesian $(n+1)$-cubes $\chi\colon\mathcal{P}(\underline{n+1})\rightarrow \Sp^{\Sigma}_G$ with $\kappa(H)\leq\conn e_{i}^H$ to $\nu$-homotopy cartesian cubes, where $\nu$ is the function
\[\nu(H)=\sum_{i=1}^{n+1}\min_{K\leq H}\left(\conn e_{i}^K-c(K)\right)\]
\item We say that $\Phi$ satisfies $W(v,\kappa)$ if for every finite $G$-set $J$ and $(X,p,s)$ in $G\mbox{-}\mathcal{S}_B$ with connectivity  $\kappa(H)\leq\conn p^H$, the canonical map
\[\Phi(X\wedge_B(J_+\times B))\longrightarrow \widetilde{\prod\limits_{\Phi(B)}}^{J}\Phi(X)\]
is $\vartheta$-connected, where $\vartheta$ is the function
\[\vartheta(H)=\min\{2\conn p^H,\min_{K\lneqq H}\conn p^K\}-v(H)\]
\end{enumerate}
We say that $\Phi$ is stably $G$-excisive if it satisfies $E^{G}_1(c,\kappa)$ and $W(v,\kappa)$ for some functions  $c,\kappa,v$. We call $v$ the additivity function of $\Phi$. We say that $\Phi$ is stably $G$-linear if it is stably $G$-excisive and $\Phi(B)$ is weakly $G$-contractible.
\end{defn}

The conditions $E^{G}_n(c,\kappa)$ for $n\geq 2$ will play a role later in the paper, when we will consider $G$-analytic functors.

\begin{rem}\label{remstablin}\begin{enumerate}[label=\emph{\arabic*})]
\item If $G=\{1\}$ is the trivial group, condition $E^{\{1\}}_n(c,\kappa)$ is equivalent to the classical condition $E_n((n+1)c,\kappa)$ of \cite[1.8]{calcI}. The difference in the constants comes form the fact that in Goodwillie's definition $c$ appears outside the sum. In our context, it is going to be convenient to allow $c$ to vary with the subgroup. This is why $c$ can take rational values.
\item When $G=\{1\}$ is the trivial group, the property $W(v,\kappa)$ follows directly from $E^{\{1\}}_1(c,\kappa)$ (with $v=2c$). When $J$ has two elements, it follows from $E^{\{1\}}_1(c,\kappa)$ for the homotopy cocartesian square
\[\xymatrix@C=35pt{X\vee_BX\ar[r]^-{\id\vee_Bp}\ar[d]_{p\vee_B\id}&X\ar[d]^{p}\\
X\ar[r]_-{p}&B
}\]
The condition for lager $J$ can be proved inductively on the cardinality of $J$. This shows that stable $G$-excision for the trivial group agrees with Goodwillie's definition of stable excision from \cite{calcI} and \cite{calcII}.
\item The connectivity range in the condition $W(v,\kappa)$ is the same as the connectivity of the inclusion
\[{\bigvee\limits_{B}}^{J}X\longrightarrow \widetilde{\prod\limits_{B}}^{J}X\]
of spaces, in Lemma \ref{connwedgesitoprodtop}. Together with the Blakers-Massey theorem (cf. \cite[2.3]{calcII}) applied on fixed points, this shows that the forgetful functor $G\mbox{-}\mathcal{S}_B\rightarrow G\mbox{-}\mathcal{S}_\ast$ is stably $G$-excisive. 
\end{enumerate}
\end{rem}

Given a homotopy functor $\Phi\colon G\mbox{-}\mathcal{S}_B\rightarrow \Sp^{\Sigma}_G$, let
$\widetilde{\Phi}\colon G\mbox{-}\mathcal{S}_B\rightarrow \Sp^{\Sigma}_G$ be the associated reduced homotopy functor
\[\widetilde{\Phi}(X)=\hofib\big(\Phi(X)\stackrel{\Phi(p)}{\longrightarrow}\Phi(B)\big)\]
where $p$ is seen as a morphism $(X,p,s)\rightarrow (B,\id,\id)$ in $ G\mbox{-}\mathcal{S}_B$. On morphisms $\widetilde{\Phi}$ sends a map $f\colon (X,p_X,s_X)\rightarrow (Y,p_Y,s_Y)$ to the map induced on homotopy fibers
\[\xymatrix{\widetilde{\Phi}(X)\ar[r]\ar@{-->}[d]_{\widetilde{\Phi}(f)}&\Phi(X)\ar[d]^{\Phi(f)}\ar[r]^-{\Phi(p_X)}& \Phi(B)\ar@{=}[d]\\
\widetilde{\Phi}(Y)\ar[r]&\Phi(Y)\ar[r]_-{\Phi(p_Y)}& \Phi(B)
}\]

\begin{lemma}\label{reducedphi}
If $\Phi$ is stably $G$-excisive, $\widetilde{\Phi}$ is stably $G$-linear.
\end{lemma}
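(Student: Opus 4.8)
The plan is to check conditions 1), 2), and 3) of stable $G$-linearity (Definition~\ref{defstablin}) for $\widetilde{\Phi}$ directly from the corresponding conditions for $\Phi$, using the fact that $\widetilde{\Phi}$ is obtained from $\Phi$ by a levelwise homotopy fibre over the constant value $\Phi(B)$, and that homotopy fibres commute with the homotopy limits appearing in the definition. Reducedness of $\widetilde{\Phi}$ is immediate: $\widetilde{\Phi}(B)=\hofib\big(\Phi(B)\xrightarrow{\id}\Phi(B)\big)$ is weakly $G$-contractible, with no hypothesis needed on $\Phi(B)$. This is why the statement only asks $\Phi$ to be stably $G$-excisive and not stably $G$-linear.

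For condition $E^G_1(c,\kappa)$, I would take a strongly homotopy cocartesian square $\chi$ in $G\mbox{-}\mathcal{S}_B$ with initial maps sufficiently connected, and compare the square $\widetilde{\Phi}\circ\chi$ with $\Phi\circ\chi$. Since $\widetilde{\Phi}(X)$ is the fibre of $\Phi(X)\to\Phi(B)$ and $\Phi(B)$ is constant on the square, there is a levelwise fibre sequence of squares $\widetilde{\Phi}\circ\chi\to\Phi\circ\chi\to\const_{\Phi(B)}$. The total homotopy fibre of $\widetilde{\Phi}\circ\chi$ agrees with that of $\Phi\circ\chi$, because the total homotopy fibre of the constant square $\const_{\Phi(B)}$ is weakly $G$-contractible (for a square, the iterated homotopy fibre of a constant diagram vanishes) — concretely, the canonical map $\chi_{\emptyset}\to\holim_{\mathcal{P}(\underline{2})\setminus\emptyset}\chi$ for $\widetilde{\Phi}\circ\chi$ and for $\Phi\circ\chi$ have $\pi_\ast$-equivalent homotopy fibres, since $\holim$ commutes with $\hofib$ and the $\holim$ of the constant punctured-square diagram on $\Phi(B)$ is again $\Phi(B)$. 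Hence $\widetilde{\Phi}$ inherits $E^G_1(c,\kappa)$ with the same functions; and the same argument word for word gives $E^G_n(c,\kappa)$ for all $n$, since the total homotopy fibre of a constant $(n{+}1)$-cube is weakly $G$-contractible.

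For condition $W(v,\kappa)$, given a finite $G$-set $J$ and $(X,p,s)$ with $\kappa(H)\le\conn p^H$, I would compare the map
\[
\widetilde{\Phi}\big(X\wedge_B(J_+\times B)\big)\longrightarrow \widetilde{\prod\limits_{\widetilde{\Phi}(B)}}^{J}\widetilde{\Phi}(X)
\]
with the corresponding map for $\Phi$. On the source, $\widetilde{\Phi}$ is the fibre over $\Phi(B)$; on the target, since $\widetilde{\Phi}(B)=\ast$ up to $\pi_\ast$-equivalence, $\widetilde{\prod}_{\widetilde{\Phi}(B)}^{J}\widetilde{\Phi}(X)$ is $\pi_\ast$-equivalent to $\prod_J\widetilde{\Phi}(X)$, and $\prod_J$ commutes with $\hofib$, so the target is the fibre of $\prod_J\Phi(X)\to\prod_J\Phi(B)$. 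Unravelling $\widetilde{\prod}_{\Phi(B)}^J\Phi(X)$ as the homotopy pullback of $\prod_J\Phi(X)\to\prod_J\Phi(B)\xleftarrow{\Delta}\Phi(B)$ and taking fibres of the two horizontal-to-$\Phi(B)$ legs, one sees that the map for $\widetilde{\Phi}$ has homotopy fibre $\pi_\ast$-equivalent to that of the map for $\Phi$. Thus $\widetilde{\Phi}$ satisfies $W(v,\kappa)$ with the same $v$ and $\kappa$, and in particular keeps the same additivity function.

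The main obstacle is purely bookkeeping: one must be careful that the comparisons above are genuinely natural in the square / in $(X,p,s)$, so that the fibre-sequence-of-diagrams arguments apply levelwise in $\Sp^\Sigma_G$, and that the replacement of $\widetilde{\prod}_{\widetilde{\Phi}(B)}^J$ by $\prod_J$ (using weak $G$-contractibility of $\widetilde{\Phi}(B)$ together with Remark~\ref{fibhofib}, which ensures homotopy pullbacks preserve $\pi_\ast$-equivalences) does not shift connectivity. No new connectivity estimate is required — the point is simply that passing to the reduced functor neither creates nor destroys any of the total-fibre connectivity, so all of $\Phi$'s structure functions $c,\kappa,v$ transfer verbatim to $\widetilde{\Phi}$.
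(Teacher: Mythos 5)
Your proposal is correct and follows essentially the same route as the paper: the paper likewise treats $E^G_1(c,\kappa)$ as immediate (by the fibre-sequence-of-squares observation you spell out) and establishes $W(v,\kappa)$ by exhibiting the canonical map for $\widetilde{\Phi}$ as the map of vertical homotopy fibres of the corresponding map for $\Phi$ over an equivalence $\Phi(B)\xrightarrow{\simeq}\holim\big(\Phi(B)\to\prod_J\Phi(B)\leftarrow\prod_J\Phi(B)\big)$, so that the connectivity $\vartheta$ transfers unchanged. The only difference is cosmetic: you replace the target $\widetilde{\prod}_{\widetilde{\Phi}(B)}^J\widetilde{\Phi}(X)$ by $\prod_J\widetilde{\Phi}(X)$ using contractibility of $\widetilde{\Phi}(B)$, whereas the paper writes the same identification directly as a $\holim$ over the punctured square.
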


\begin{proof}
It is immediate to see that $E^{G}_1(c,\kappa)$ for $\Phi$ implies $E^{G}_1(c,\kappa)$ for $\widetilde{\Phi}$. The map $\widetilde{\Phi}(X\wedge_B(J_+\times B))\rightarrow \prod_J\widetilde{\Phi}(X)$ is the map of vertical homotopy fibers in the diagram
\[\xymatrix@R=13pt{
\widetilde{\Phi}(X\wedge_B(J_+\times B))\ar[d] \ar[r] &\holim\bigg(\ar@<-3ex>[d(.8)]\ast\ar[r]&\ast&\prod_J\widetilde{\Phi}(X)\ar[l] \bigg)
\\
\Phi(X\wedge_B(J_+\times B))\ar[d] \ar[r] &\holim\bigg(\Phi(B)\ar@<-3ex>[d(.8)]\ar[r]^-\Delta&\prod_J\Phi(B)&\prod_J\Phi(X)\ar[l] \bigg)
\\
\Phi(B) \ar[r]^-\simeq& \holim\bigg(\Phi(B)\ar[r]^-\Delta&\prod_J\Phi(B)&\prod_J\Phi(B)\ar@{=}[l]\bigg)
}
\]
The middle map is $\vartheta$-connected by condition $W(v,\kappa)$ for $\Phi$, and therefore so is the map on homotopy fibers.
\end{proof}

For a finite $G$-set 
$I$, let $S^{\mathbb{R}[I]}$ be the $I$-fold smash product of simplicial circles
\[S^{\mathbb{R}[I]}=S^1\wedge S^1\wedge\dots\wedge S^1 \]
where $G$ acts by permuting the smash components.
This is our simplicial model for the one point compactification of the permutation representation $\mathbb{R}[I]$ spanned by $I$. We denote its suspension in the category $G\mbox{-}\mathcal{S}_B$ by
\[S^{\mathbb{R}[I]}_BX=X\wedge_B(S^{\mathbb{R}[I]}\times B)\]
Let $nG$ be the disjoint union of $n$ copies of $G$ with diagonal action by left multiplication. The corresponding suspension is denoted
\[S^{\mathbb{R}[nG]}=S^{n\rho}\]

\begin{defn}\label{defdifferential}
The differential of a reduced enriched homotopy functor $\Phi\colon G\mbox{-}\mathcal{S}_B\rightarrow \Sp^{\Sigma}_G$  is the functor $D\Phi\colon G\mbox{-}\mathcal{S}_B\rightarrow \Sp^{\Sigma}_G$
defined by
\[D\Phi(X)=\hocolim_{n\in \mathbb{N}}\Omega^{n\rho}\Phi(S^{n\rho}_BX)\]
with structure maps adjoint to the assembly map
\[\Phi\big(S^{n\rho}_BX\big)\wedge |S^\rho|\longrightarrow \Phi\big(S^{n\rho}_BX\wedge_B( S^\rho\times B)\big)\cong \Phi\big(S^{(n+1)\rho}_BX\big)  \]
of \S\ref{secsetup}. If $\Phi$ is not reduced, we define $D\Phi=D\widetilde{\Phi}\colon G\mbox{-}\mathcal{S}_B\rightarrow \Sp^{\Sigma}_G$.
\end{defn}

\begin{prop}\label{derstablinislin}
The differential of a stably $G$-excisive functor is $G$-linear.
\end{prop}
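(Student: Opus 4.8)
The plan is to reduce to a reduced functor and then stabilise the two estimates $E^{G}_{1}(c,\kappa)$ and $W(v,\kappa)$ of Definition~\ref{defstablin}. Since $D\Phi=D\widetilde{\Phi}$ by Definition~\ref{defdifferential}, and $\widetilde{\Phi}$ is stably $G$-linear by Lemma~\ref{reducedphi}, I may assume $\Phi=\widetilde{\Phi}$ is reduced and satisfies $E^{G}_{1}(c,\kappa)$ and $W(v,\kappa)$ for some $c,\kappa,v$, and then I must verify the three conditions of Definition~\ref{Glin} for $D\Phi(X)=\hocolim_{n}\Omega^{n\rho}\Phi(S^{n\rho}_{B}X)$. (That $D\Phi$ is again an enriched homotopy functor is routine, as $S^{n\rho}_{B}(-)$, $\Omega^{n\rho}$ and filtered homotopy colimits all preserve the relevant equivalences.) Two bookkeeping facts will be used throughout: the relative suspension $S^{n\rho}_{B}(-)=(-)\wedge_{B}(S^{n\rho}\times B)$ is a left adjoint, hence preserves homotopy cocartesian squares, and it raises the $H$-connectivity of a map by at least $n|G/H|$ up to an $n$-independent constant, since $(S^{n\rho})^{H}\cong S^{n|G/H|}$; dually, $\Omega^{n\rho}$ sends a $\nu$-connected $G$-spectrum to a $\mu$-connected one with $\mu(H)=\min_{K\le H}(\nu(K)-n|G/K|)$. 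Reducedness of $D\Phi$ is then immediate: the zero object $(B,\id,\id)$ is absorbing for $\wedge_{B}$, so $S^{n\rho}_{B}B\cong B$, hence $\Phi(S^{n\rho}_{B}B)\simeq\Phi(B)$ is weakly $G$-contractible, and therefore so is $D\Phi(B)$.

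For the cube condition I would take a homotopy cocartesian square $\chi$ in $G\mbox{-}\mathcal{S}_{B}$. Each $S^{n\rho}_{B}\chi$ is again homotopy cocartesian, hence strongly homotopy cocartesian as a $2$-cube, and the $H$-connectivity of its initial maps $e_{i,n}$ grows like $\conn_{H}e_{i}+n|G/H|$. As $\kappa$ is a fixed function on the finitely many subgroups of $G$, the hypothesis $\kappa(H)\le\conn_{H}e_{i,n}$ of $E^{G}_{1}(c,\kappa)$ holds for every $H$ and $i$ once $n$ is large, so $\Phi(S^{n\rho}_{B}\chi)$ is $\nu_{n}$-homotopy cartesian with $\nu_{n}(H)=\sum_{i=1}^{2}\min_{K\le H}(\conn_{K}e_{i,n}-c(K))\ge 2n|G/H|-C$ for a constant $C$ independent of $n$. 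Applying $\Omega^{n\rho}$, which commutes with the homotopy limit over the punctured square, makes $\Omega^{n\rho}\Phi(S^{n\rho}_{B}\chi)$ into a $\mu_{n}$-homotopy cartesian cube with $\mu_{n}(H)\ge\min_{K\le H}(\nu_{n}(K)-n|G/K|)\ge n|G/H|-C$, which tends to infinity with $n$. Since $\pi^{K}_{\ast}$ of a filtered homotopy colimit of symmetric $G$-spectra is the colimit of the groups $\pi^{K}_{\ast}$, and filtered homotopy colimits commute with the finite homotopy limit computing the total homotopy fibre of a square, it follows that $D\Phi(\chi)$ has weakly $G$-contractible total fibre, i.e.\ is homotopy cartesian (equivalently, cocartesian, by Remark~\ref{fibhofib}).

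For additivity I would fix $(X,p,s)$ and a finite $G$-set $J$. Associativity and commutativity of $\wedge_{B}$ yield $S^{n\rho}_{B}\big(X\wedge_{B}(J_{+}\times B)\big)\cong(S^{n\rho}_{B}X)\wedge_{B}(J_{+}\times B)$, so $D\Phi\big(X\wedge_{B}(J_{+}\times B)\big)=\hocolim_{n}\Omega^{n\rho}\Phi\big((S^{n\rho}_{B}X)\wedge_{B}(J_{+}\times B)\big)$. The projection $S^{n\rho}_{B}X\to B$ has $H$-connectivity growing like $\conn_{H}p+n|G/H|$, so for $n$ large $W(v,\kappa)$ applies to $S^{n\rho}_{B}X$ and $J$, and the canonical map
\[\Phi\big((S^{n\rho}_{B}X)\wedge_{B}(J_{+}\times B)\big)\longrightarrow\widetilde{\prod\limits_{\Phi(B)}}^{J}\Phi(S^{n\rho}_{B}X)\]
is $\vartheta_{n}$-connected with $\vartheta_{n}(H)\ge 2n|G/H|-C'$, using $|G/K|\ge 2|G/H|$ for $K\lneqq H$. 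After $\Omega^{n\rho}$ and passage to the colimit this becomes a $\pi_{\ast}$-equivalence, exactly as in the cube case. Finally $\Phi(B)$, and hence $\prod_{J}\Phi(B)$, is weakly $G$-contractible, so $\widetilde{\prod\limits_{\Phi(B)}}^{J}\Phi(S^{n\rho}_{B}X)\to\prod_{J}\Phi(S^{n\rho}_{B}X)$ is a $\pi_{\ast}$-equivalence, and since $\Omega^{n\rho}$ and filtered homotopy colimits commute with finite products, $\hocolim_{n}\Omega^{n\rho}\prod_{J}\Phi(S^{n\rho}_{B}X)\simeq\prod_{J}D\Phi(X)$. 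It then remains to check, using the compatibility of the assembly maps defining the structure maps of $D\Phi$ with the projections $\pr_{j}$, that the resulting composite is the canonical map of Definition~\ref{Glin}, which is bookkeeping.

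The step I expect to be the main obstacle is the connectivity arithmetic, in particular verifying that the \emph{relative} suspension $S^{n\rho}_{B}(-)$ really shifts $H$-connectivities by $\sim n|G/H|$ for all subgroups simultaneously---the fixed points of $\wedge_{B}$ mix contributions from different subgroups, as in the analysis of Lemma~\ref{connwedgesitoprodtop}---together with the matching estimate for $\Omega^{n\rho}$. Once these are secured, the remainder is the formal observation that connectivities tending to infinity trivialise everything in the colimit. A secondary point to pin down is the interchange of filtered homotopy colimits with the finite homotopy limits over punctured squares, which is legitimate for symmetric $G$-spectra with $\pi_{\ast}$-equivalences because such homotopy limits induce long exact sequences in $\pi^{K}_{\ast}$ and filtered colimits of abelian groups are exact.
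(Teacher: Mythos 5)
Your proposal is correct and follows essentially the same route as the paper: reduce to the reduced case via Lemma \ref{reducedphi}, observe that relative suspension by $S^{n\rho}$ raises $H$-connectivities by roughly $n|G/H|$ so that $E^G_1(c,\kappa)$ and $W(v,\kappa)$ apply for large $n$, loop down by $\Omega^{n\rho}$ losing only $n|G/K|$ per subgroup, and conclude that the connectivity estimates diverge in the homotopy colimit. The connectivity arithmetic you flag as the main obstacle is carried out in the paper exactly along the lines you sketch, and your extra remarks (reducedness via $S^{n\rho}_B B\cong B$, replacing $\widetilde{\prod}_{\Phi(B)}^J$ by $\prod_J$ using weak contractibility of $\Phi(B)$) are correct supplements to what the paper leaves implicit.
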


\begin{proof} By Lemma \ref{reducedphi} we can assume that $\Phi$ is reduced. Moreover $D\Phi$ is obviously a reduced homotopy functor. Let $\chi$ be a homotopy cocartesian square in $ G$-$\mathcal{S}_B$ with initial maps $e_i\colon\chi_\emptyset\rightarrow \chi_i$ for $i=0,1$. Notice that if $e_i$ is $k_i$-connected and $H$ is a subgroup of $G$, the relative suspension $S^{\mathbb{R}[I]}_Be_i$ on $H$-fixed points is
\[k_i(H)+\conn (S^{\mathbb{R}[I]})^H+1\]
connected.
Moreover the $H$-fixed points space of the sphere $S^{\mathbb{R}[I]}$ is isomorphic to $S^{\mathbb{R}[I/H]}$,
which is $(|I/H|-1)$-connected. In particular $(S^{n\rho})^H$ is $(n|G/H|-1)$-connected. Thus by choosing $n$ sufficiently large the initial maps of $S^{n\rho}_B\chi$ become $\kappa_n$-connected, for
\[\kappa_n(H)=k_{i}(H)+n|G/H|\geq \kappa(H)\]
Condition $E^{G}_1(c,\kappa)$ ensures that the square $\Phi(S^{n\rho}_B\chi)$ is $\nu_n$-homotopy cartesian for
\[\begin{array}{lll}\nu_n(H)&=\sum\limits_{i=0,1}\min_{K\leq H}\big(k_{i}(K)+n|G/K|-c(K)\big)
\\&\geq 2n|G/H|+\sum\limits_{i=0,1}\min_{K\leq H}\big(k_{i}(K)-c(K)\big)\\
&\geq 2n|G/H|-2(\overline{c}(H)+1)
\end{array}\]
where $\overline{c}(H):=\max_{K\leq H}c(K)$. The $n$-th stage of the homotopy colimit defining $D\Phi(\chi)$ is then $\nu$-homotopy cartesian, for
\[\begin{array}{lll}\nu(H)&=\min_{K\leq H}\big(\nu_n(K)-\dim(S^{n\rho})^K\big)
\\&\geq \min_{K\leq H}\big(2n|G/K|-2(\overline{c}(K)+1)-n|G/K|\big)
\\&\geq n|G/H|-2\min_{K\leq H}\big((\overline{c}(K)+1\big)
\end{array}\]
This becomes arbitrarily large with $n$, and $D\Phi(\chi)$ is homotopy cartesian.

By a similar argument, we can choose $n$ sufficiently large so that the map
\[\Phi(S^{n\rho}_BX\wedge_B(J_+\times B))\longrightarrow \prod_J\Phi(S^{n\rho}_BX)\]
is $\vartheta_n$-connected, for
\[\begin{array}{lll}\vartheta_n(H)&\!\!\!\!\!=\min\big\{2(\conn p^H+n|G/H|),\min_{K\lneqq H}(\conn p^K+n|G/K|)\big\}-v(H)
\\&\!\!\!\!\!\geq\min\big\{2n|G/H|,\min_{K\lneqq H}n|G/K|\big\}-v(H)
\end{array}\]
Let us denote $\overline{v}=\max_{K\leq H}v(K)$.
The map $\Omega^{n\rho}\Phi\big(S^{n\rho}_BX\wedge_B(J_+\times B)\big)\to \prod_J\Omega^{n\rho}\Phi(S^{n\rho}_BX)$
is then $\nu_n$-connected, for
\[\begin{array}{llll}\nu_n(H)&=\min_{L\leq H}(\vartheta_n(L)-n|G/L|)\\
&\geq\min_{L\leq H}\big(\min\{2n|G/L|,\min_{K\lneqq L}n|G/K|\}-v(L)-n|G/L|\big)\\
&\geq\min_{L\leq H}\big(\min\{n|G/L|,\min_{K\lneqq L}n|G/K|-n|G/L|\}\big)-\overline{v}\\
&\geq\min_{L\leq H}\big(\min\{n|G/L|,n\}\big)-\overline{v}\geq n-\overline{v}
\end{array}\]
Since homotopy colimits preserve connectivity, the map $D\Phi(X\wedge_B(J_+\times B))\to \prod_JD\Phi(X)$
is $(n-\overline{v})$-connected, for every $n$.
\end{proof}

There is a canonical map $\Phi\to D\Phi$, as the functor $\Phi$ is the first term of the homotopy colimit sequence defining $D\Phi$. In order to estimate the connectivity of this ``approximation map'' $\Phi\to D\Phi$ we need an extra connectivity assumption on $\Phi$.

\begin{defn}\label{presconn}
Let $\Phi\colon G\mbox{-}\mathcal{S}_B\rightarrow \Sp^{\Sigma}_G$ be a homotopy functor.
 We say that $\Phi$ preserves connectivity above a function  $\kappa\colon \{H\leq G\}\rightarrow \mathbb{N}$  if there is a function $\lambda\colon \{H\leq G\}\rightarrow \mathbb{Z}$ such that
\[\conn_H\Phi(X)\geq \min_{K\leq H}\big(\conn p^K\big)+\lambda(H)\]
for every subgroup $H$ of $G$ and every $(X,p,s)$ in $G\mbox{-}\mathcal{S}_B$ with $\kappa(H)\leq \conn p^H$. We say that $\lambda$ is the connectivity function of $\Phi$.
\end{defn}

\begin{prop}\label{connlinapprox}
 Let  $\Phi\colon G\mbox{-}\mathcal{S}_B\rightarrow \Sp^{\Sigma}_G$ be a reduced homotopy functor that satisfies $E^{G}_1(c,\kappa)$ and that preserves connectivity. For every retractive space $(X,p,s)$ in $G\mbox{-}\mathcal{S}_B$ with $\kappa(H)\leq\conn p^H$ the canonical map $\Phi(X)\rightarrow D\Phi(X)$ is $\nu$-connected, for
\[\nu(H)=\min\left\{2\min_{K\leq H}\big(\conn p^K-c(K)\big),\min_{K\lneqq H}\big(\conn p^K+\lambda(K)\big)\right\}\]
where $\lambda$ is the connectivity function of $\Phi$.
\end{prop}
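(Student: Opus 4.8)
The plan is to estimate the connectivity of the map $\Phi(X) \to D\Phi(X) = \hocolim_n \Omega^{n\rho}\Phi(S^{n\rho}_B X)$ by analyzing each map in the telescope. First I would observe that since homotopy colimits preserve connectivity, it suffices to bound the connectivity of the canonical map $\Phi(X) \to \Omega^{\rho}\Phi(S^{\rho}_B X)$, and more generally of each structure map $\Omega^{n\rho}\Phi(S^{n\rho}_B X) \to \Omega^{(n+1)\rho}\Phi(S^{(n+1)\rho}_B X)$; the connectivity of $\Phi(X) \to D\Phi(X)$ will be the infimum (or rather the liminf) of these. The key reduction, following \cite[1.7]{calcI}, is that the map $\Phi(X) \to \Omega^{\rho}\Phi(S^{\rho}_B X)$ factors through the adjoint assembly map, whose failure to be an equivalence is controlled by how far the square
\[\xymatrix{\Phi(X)\ar[r]\ar[d]&\Phi(C_p)\ar[d]\\ \Phi(C_p)\ar[r]&\Phi(S^{\rho}_B X)}\]
is from being homotopy cartesian (cf. Remark \ref{assandposquare}), where $C_p$ is the mapping cylinder of $p$. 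This square is strongly homotopy cocartesian in $G\mbox{-}\mathcal{S}_B$, and its initial maps $C_p \leftarrow X$ — equivalently the maps $X \to C_p$ — on $H$-fixed points have connectivity related to $\conn p^H$.

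Next I would feed this square into condition $E^G_1(c,\kappa)$. The square $\Phi$ applied to the cocartesian square above becomes $\nu'$-homotopy cartesian where $\nu'(H) = \sum_{i=1}^{2}\min_{K\le H}(\conn e_i^K - c(K))$. I need to compute $\conn e_i^K$: the cofiber of $X \to C_p$ is $S^1 \wedge (B/\ldots)$-type and reflects the relative structure; more precisely the relevant connectivity is $\conn p^K$ up to a shift, and after the relative suspension by $S^{\rho}$ I get something like $\conn p^K + \dim(S^\rho)^K$. Iterating, at stage $n$ the initial maps of $S^{n\rho}_B\chi$ have $H$-fixed-point connectivity roughly $\conn p^H + n|G/H|$ (exactly as in the proof of Proposition \ref{derstablinislin}), so $E^G_1(c,\kappa)$ applies once $n$ is large enough to exceed $\kappa$, and the square is $\nu'_n$-cartesian with $\nu'_n(H) \approx 2\min_{K\le H}(\conn p^K + n|G/K| - c(K))$. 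Taking $\Omega^{n\rho}$ loses $\dim(S^{n\rho})^K = n|G/K|$ at each subgroup $K \le H$, leaving the first candidate term $2\min_{K\le H}(\conn p^K - c(K))$ for the connectivity contribution from excision.

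The second term in $\nu(H)$ — namely $\min_{K\lneqq H}(\conn p^K + \lambda(K))$ — comes from the asymmetry in the definition of $\widetilde{\prod}$ versus the genuine fixed points, i.e. from the terms in the wedge-into-product comparison of Lemma \ref{connwedgesitoprodtop} indexed by orbits $[j]$ with $|[j]| \geq 2$; those involve the proper subgroups $K \lneqq H$. Here the hypothesis that $\Phi$ preserves connectivity enters: the contribution of such a non-trivially-stabilized summand is bounded below using $\conn_K \Phi(-) \geq \min_{L\le K}\conn p^L + \lambda(K)$, and after suspension/looping this becomes $\conn p^K + \lambda(K)$ for $K \lneqq H$. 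I would assemble these two bounds, check that both go to infinity with $n$ except for the constant terms (so the telescope map stabilizes and the liminf is achieved), and conclude that $\Phi(X) \to D\Phi(X)$ is $\nu$-connected with $\nu$ as stated. The main obstacle will be the bookkeeping in the second term: correctly identifying which proper-subgroup connectivities $\conn p^K$ and which value of the connectivity function $\lambda$ govern the orbit-summands of size $\geq 2$, and verifying that the "preserves connectivity" hypothesis is exactly strong enough to handle the relatively suspended objects $S^{n\rho}_B X$ uniformly in $n$ — this is where one must be careful that $\kappa(H) \le \conn p^H$ for $X$ implies the corresponding inequality for all the suspensions, which follows since suspension only increases connectivity.
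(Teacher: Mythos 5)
Your plan for the first term of $\nu(H)$ is in the right spirit, but the proposal has a genuine gap in two places. First, you attribute the second term $\min_{K\lneqq H}\bigl(\conn p^K+\lambda(K)\bigr)$ to the wedge-into-product comparison of Lemma \ref{connwedgesitoprodtop} and the $\widetilde{\prod}$ construction. That cannot be the mechanism: the proposition does not assume the condition $W(v,\kappa)$ at all (only $E^G_1(c,\kappa)$ and connectivity preservation), and no indexed wedges or products appear anywhere in the comparison $\Phi(X)\to\Omega^{n\rho}\Phi(S^{n\rho}_BX)$. The actual source of the proper-subgroup term is the equivariant cell structure of the regular representation sphere. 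The key idea you are missing is to factor the map $\Phi(X)\to\Omega^{n\rho}\Phi(S^{n\rho}_BX)$, for a fixed subgroup $H$, through $\Omega^{n|G/H|}\Phi\bigl(S^{n|G/H|}_BX\bigr)$ using the inclusion of the $H$-fixed sphere $S^{n|G/H|}=(S^{n\rho})^H\hookrightarrow S^{n\rho}$. The left leg (assembly for the trivially-acted sphere) is controlled by $E^G_1(c,\kappa)$ exactly as in the classical case and yields $2\min_{K\leq H}(\conn p^K-c(K))$; the restriction map $\Omega^{n\rho}\to\Omega^{n|G/H|}$ has fiber $\map_\ast\bigl(S^{n\rho}/S^{n|G/H|},\Phi(S^{n\rho}_BX)\bigr)$, and since the quotient $S^{n\rho}/S^{n|G/H|}$ contains no cells with full stabilizer $H$, its $\pi^H_\ast$-connectivity is governed by $\conn_K\Phi(S^{n\rho}_BX)-n|G/K|$ over proper subgroups $K\lneqq H$ — this is where ``preserves connectivity'' enters and where the second term comes from; and the remaining map $\iota$ (induced by the sphere inclusion inside $\Omega^{n|G/H|}$) is shown to have connectivity diverging with $n$, so it does not contribute.

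Second, even for the first term your reduction is not quite sound as written: a single homotopy pushout square (the mapping-cylinder square of Remark \ref{assandposquare}) controls delooping by the \emph{trivial} circle $S^1$ only, not by $S^\rho$. Your square with $\Phi(S^\rho_BX)$ in the corner is not the square of Remark \ref{assandposquare}, and applying $E^G_1$ to it and then ``taking $\Omega^{n\rho}$'' does not directly estimate the map $\Phi(X)\to\Omega^{n\rho}\Phi(S^{n\rho}_BX)$, because the loops in the nontrivial directions of $\rho$ are not accounted for by the cartesianness of that square. This is precisely why the fixed-sphere factorization is needed: on the $H$-trivial part $S^{n|G/H|}$ the classical iterated argument applies in $\pi^H_\ast$, and the nontrivial part must be treated separately via the cell filtration of $S^{n\rho}/S^{n|G/H|}$.
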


\begin{rem}\label{rightestimate}
\begin{enumerate}[label=\emph{\arabic*}),leftmargin=.5cm]
\item We are in fact going to prove that the map $\Phi(X)\rightarrow D\Phi(X)$ is $\nu_n$-connected for every sufficiently large $n$, where
\[\nu_n(H)=\min\left\{2\min_{K\leq H}\big(\conn p^K-c(K)\big),\min_{K\lneqq H}\big(\conn_K\Phi(S^{n\rho}_BX)-n|G/K|\big)\right\}\]
The estimate of Proposition \ref{connlinapprox} is obtained by using once more that $\Phi$ preserves connectivity, as
\[\begin{array}{l}\min_{K\lneqq H}\big(\conn_K\Phi(S^{n\rho}_BX)-n|G/K|\big)\geq\\
\min_{K\lneqq H}\big(
(\min_{L\leq K}\conn (S^{n\rho}_Bp)^L)+\lambda(K)-n|G/K|\big)=\\
\min_{K\lneqq H}\big(
\min_{L\leq K}(\conn p^L+n(|G/L|-|G/K|))+\lambda(K)\big)\geq\\
\min_{K\lneqq H}\big(
\min\{\conn p^K,\min_{L\lneqq K}(\conn p^L+n)\}+\lambda(K)\big)=\\
\min_{K\lneqq H}\big(
\conn p^K+\lambda(K)\big)
\end{array}
\]
The last equality holds for $n$ sufficiently large. In the proof of Theorem $B$ below this more refined estimate is going to be a key ingredient.

\item If $\Phi$ is $G$-linear, the map $\Phi\rightarrow D\Phi$ is an equivalence, even though the connectivity range of Proposition \ref{connlinapprox} is finite. For every $n$, there is a commutative diagram
\[\xymatrix{\Phi(X)\ar[dr]_{\simeq}\ar[r]&\Omega^{n\rho}\Phi(S^{n\rho}_BX)\\
&\Omega^{n\rho}(\Phi(X)\wedge S^{n\rho})\ar[u]^{\simeq }_{\ref{linearsmashspectrum}}
}\]
where the diagonal map is an equivalence as representation spheres are invertible in $G$-spectra. Hence $D\Phi$ is a homotopy colimit of equivalences, and the map from the initial object of the sequence $\Phi\to D\Phi$ is an equivalence. A similar statement for $G$-linear functors to $G$-spaces instead of $G$-spectra is proved in \cite[3.5]{Blumberg} and \cite[1.4]{Shima} using different methods (there is no analogue of Proposition \ref{linearsmashspectrum} if the target is not the category of $G$-spectra).

\end{enumerate}
\end{rem}

\begin{proof}[Proof of \ref{connlinapprox}.] We study the connectivity of $\Phi(X)\rightarrow D\Phi(X)$ in $\pi^{H}_\ast$, for a fixed subgroup $H\leq G$.
The map $\Phi(X)\rightarrow \Omega^{n\rho}\Phi(S^{n\rho}_BX)$ fits into a commutative diagram
\[\xymatrix{\Phi(X)\ar[r]\ar[d]& \Omega^{n\rho}\Phi(S^{n\rho}_BX)\ar[d]^{\res}\\
 \Omega^{n|G/H|}\Phi\big(S^{n|G/H|}_BX\big)\ar[r]_-{\iota} & \Omega^{n|G/H|}\Phi(S^{n\rho}_BX)
}\]
where both $\iota$ and $\res$ are induced by the inclusion $S^{n|G/H|}=(S^{n\rho})^H\rightarrow S^{n\rho}$ of the fixed points sphere.
The connectivity of the left vertical map in $\pi^{H}_\ast$ is 
\[2\min_{K\leq H}\left(\conn p^K-c(K)\right)\]
since $\Phi$ satisfies $E^{G}_1(c,\kappa)$ and $S^{n|G/H|}$ has trivial $H$-action.
The right vertical map fits into a fiber sequence
\[\map_\ast\big(S^{n\rho}/S^{n|G/H|},\Phi(S^{n\rho}_BX)\big)\longrightarrow\Omega^{n\rho}\Phi(S^{n\rho}_BX)\longrightarrow\Omega^{n|G/H|}\Phi(S^{n\rho}_BX)\]
of symmetric $G$-spectra (mapping spaces are taken levelwise). The connectivity of the mapping space in $\pi^{H}_\ast$, and hence of the restriction map, is at least
\[\min_{K\in cell(S^{n\rho}/S^{n|G/H|})}\left(\conn_K\Phi(S^{n\rho}_BX)-\dim S^{n|G/K|}/S^{n|G/H|}\right)
\]
where the minimum is taken over the collection of subgroups $K$ of $H$ with the property that the $H$-$CW$-complex $S^{n\rho}/S^{n|G/H|}$ contains a $K$-equivariant cell (one of the form $H/K\times D^k$). Since $S^{n|G/H|}$ is the $H$-fixed points of $S^{n\rho}$, the quotient cannot contain an $H$-equivariant cell (this would be a cell with trivial $H$-action), and the minimum above is greater than
\[\min_{K\lneqq H}\left(\conn_K\Phi(S^{n\rho}_BX)-\dim S^{n|G/K|}/S^{n|G/H|}\right)=\min_{K\lneqq H}\left(\conn_K\Phi(S^{n\rho}_BX)-n|G/K|\right)\]

Let us finally compute the connectivity of $\iota$. The cofiber of the inclusion  $S^{n|G/H|}\rightarrow S^{n\rho}$ is $G$-equivalent to $S^{n\rho}/S^{n|G/H|}$. The homotopy cocartesian square
\[\xymatrix{S^{n|G/H|}_BX\ar[d]\ar[r]&S^{n\rho}_BX\ar[d]\\
B\ar[r]&X\wedge_B\big(S^{n\rho}/S^{n|G/H|}\times B\big)
}\]
induces a sequence $\Phi\big(S^{n|G/H|}_BX\big)\to \Phi(S^{n\rho}_BX)\to \Phi\big(X\wedge_B(S^{n\rho}/S^{n|G/H|}\times B)\big)$. By stable linearity, this induces a long exact sequence in $\pi^{H}_\ast$ up to degree
\[\nu'(H)=2\min_{K\leq H}\left(\conn p^K-c(K)\right)+2n|G/H|\]
Looped down by $\Omega^{n|G/H|}$, it induces a long exact sequence in $\pi^{H}_\ast$ up to degree
\[\nu(H)=2\min_{K\leq H}\left(\conn p^K-c(K)\right)+n|G/H|\]
This range can be made arbitrarily large with $n$. Therefore we can choose $n$ sufficiently large so that $\iota$ is as connected as $\Omega^{n|G/H|}\Phi\big(X\wedge_B(S^{n\rho}/S^{n|G/H|}\times B)\big)$ in $\pi^{H}_\ast$. Since $\Phi$ preserves connectivity this is at least
$\min_{K\leq H}\big(\conn p^K+\conn S^{n|G/K|}/S^{n|G/H|}\big)+\lambda(H)-n|G/H|$ connected. The $K=H$ term of the minimum is infinite, and the connectivity becomes
\[\begin{array}{lll}\min_{K\lneqq H}\big(\conn p^K+n|G/K|-1\big)+\lambda(H)-n|G/H|\\
\geq\min_{K\lneqq H}\big(\conn p^K+n(|G/H|+1)-1\big)+\lambda(H)-n|G/H|\\
=\min_{K\leq H}\big(\conn p^K-1\big)+n+\lambda(H)
\end{array}\]
This diverges with $n$, and therefore we can choose $n$ sufficiently large so that $\iota$ does not contribute to the connectivity of $\Phi(X)\rightarrow D\Phi(X)$.
\end{proof}

We end the section by discussing differentials for non-relative functors.
Let $F\colon G\mbox{-}\mathcal{S}_\ast\rightarrow \Sp^{\Sigma}_G$ be a homotopy functor, and let $B$ in $G\mbox{-}\mathcal{S}_\ast$ be a pointed finite simplicial $G$-set. Define a homotopy functor $\widetilde{F}_B\colon G\mbox{-}\mathcal{S}_B\rightarrow \Sp^{\Sigma}_G$ by taking the homotopy fiber
\[\widetilde{F}_B(X)=\hofib\big(F(X)\stackrel{F(p)}{\longrightarrow}F(B)\big)\]
on objects, and by sending a morphism $f\colon (X,p_X,s_X)\rightarrow (Y,p_Y,s_Y)$ to the map induced on homotopy fibers
\[\xymatrix{\widetilde{F}_B(X)\ar[r]\ar@{-->}[d]_{\widetilde{F}_B(f)}&F(X)\ar[d]^{F(f)}\ar[r]^-{F(p_X)}& F(B)\ar@{=}[d]\\
\widetilde{F}_B(Y)\ar[r]&F(Y)\ar[r]_-{F(p_Y)}& F(B)
}\]

\begin{defn}\label{reldiff}
Let $F\colon G\mbox{-}\mathcal{S}_\ast\rightarrow \Sp^{\Sigma}_G$ be a homotopy functor. The differential of $F$ at $B$ in $G\mbox{-}\mathcal{S}_\ast$ is the differential of $\widetilde{F}_B$
\[D_BF:=D\widetilde{F}_B\colon G\mbox{-}\mathcal{S}_B\longrightarrow \Sp^{\Sigma}_G\]
\end{defn}

\begin{defn}\label{reladd}
We say that $F\colon G\mbox{-}\mathcal{S}_\ast\rightarrow \Sp^{\Sigma}_G$ is relatively additive if for every $B$ in $G\mbox{-}\mathcal{S}_\ast$ there are functions $v_B,\kappa_B\colon\{H\leq G\}\rightarrow\mathbb{Z}$ such that  for every $(X,p,s)$ in $G\mbox{-}\mathcal{S}_B$ with connectivity $\conn p^H\geq \kappa_B(H)$, the canonical map
\[F(X\wedge_B(J_+\times B))\longrightarrow \stackrel[F(B)]{}{\widetilde{\prod}}^{\!\!\!J} F(X)\]
is $\vartheta_B$-connected, for $\vartheta_B(H)=\min\{2\conn p^H,\min_{K\lneqq H}\conn p^K\}-v_B(H)$.
\end{defn}

\begin{rem}\label{linreladd}
A $G$-linear functor $F\colon G\mbox{-}\mathcal{S}_\ast\rightarrow \Sp^{\Sigma}_G$ is automatically relatively additive. Indeed, the cofiber sequence $B\rightarrow X\wedge_B(J_+\times B)\rightarrow \bigvee_{J}X/B$ induces a map of fiber sequences
\[\xymatrix{
F(B)\ar@{=}[d]\ar[r]& F(X\wedge_B(J_+\times B))\ar[d]\ar[r]& F(\bigvee_{J}X/B)\ar[d]^{\simeq}\\
F(B)\ar[r]&\stackrel[F(B)]{}{\widetilde{\prod}}^{\!\!\!J} F(X)\ar[r]&\prod_JF(X/B)
}\]
where the right vertical map is a $G$-equivalence by $G$-linearity. The right bottom map is
\[\holim\bigg(F(B)\rightarrow \prod_JF(B)\leftarrow\prod_JF(X)\bigg)\longrightarrow \holim\bigg(\ast\rightarrow \ast\leftarrow\prod_JF(X/B)\bigg)\]
whose homotopy fiber is indeed $\holim\bigg(F(B)\rightarrow\prod_JF(B)\leftarrow\prod_J F(B)\bigg)\simeq F(B)$.
\end{rem}

\begin{prop}\label{forgetstablylinear}
Suppose that  $F\colon G\mbox{-}\mathcal{S}_\ast\rightarrow \Sp^{\Sigma}_G$ is a stably $G$-linear and relatively additive homotopy functor. Then
 the functor $\widetilde{F}_B\colon G\mbox{-}\mathcal{S}_B\rightarrow \Sp^{\Sigma}_G$ is stably $G$-linear. In particular $D_BF$ is a $G$-linear functor.
\end{prop}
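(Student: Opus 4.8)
The plan is to observe that $\widetilde{F}_B$ is the reduction $\widetilde{F\circ U}$ of the composite $F\circ U$, where $U\colon G\mbox{-}\mathcal{S}_B\to G\mbox{-}\mathcal{S}_\ast$ is the forgetful functor sending $(X,p,s)$ to the underlying pointed simplicial $G$-set of $X$. Since $F$ is defined on $G\mbox{-}\mathcal{S}_\ast$, the symbol $F(X)$ in Definition \ref{reldiff} tacitly means $F(UX)$, so $\widetilde{F}_B(X)=\hofib\big((F\circ U)(X)\to (F\circ U)(B,\id,\id)\big)=\widetilde{F\circ U}(X)$ in the notation preceding Lemma \ref{reducedphi}. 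The functor $F\circ U$ is again a homotopy functor: it is the extension, in the sense of Definition \ref{htpyfctr}, of the non-equivariant composite $\mathcal{S}_B\to\mathcal{S}_\ast\xrightarrow{F}\Sp^{\Sigma}_G$, and $U$ carries $H$-equivalences to $H$-equivalences by the very definition of $H$-equivalence in $H\mbox{-}\mathcal{S}_B$. Hence it suffices to show that $F\circ U$ is stably $G$-excisive: then Lemma \ref{reducedphi} gives that $\widetilde{F}_B$ is stably $G$-linear, and since $\widetilde{F}_B$ is reduced, $D_BF=D\widetilde{F}_B$ is the differential of a stably $G$-excisive functor, hence $G$-linear by Proposition \ref{derstablinislin}.

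For the condition $E^{G}_1(c,\kappa)$ of $F\circ U$, I would first record that $U$ preserves pushouts and sends cofibrations of $G\mbox{-}\mathcal{S}_B$ to cofibrations of $G\mbox{-}\mathcal{S}_\ast$ in the fixed point model structure, hence preserves homotopy pushouts; thus $U$ takes a strongly homotopy cocartesian square to a strongly homotopy cocartesian square. As the connectivity of a map in $G\mbox{-}\mathcal{S}_B$ is by convention that of its underlying $G$-map, the initial maps satisfy $\conn(Ue_i)^H=\conn e_i^H$. Applying $E^{G}_1(c,\kappa)$ for $F$ — which is part of the hypothesis that $F$ is stably $G$-linear — to the square $U\chi$ then produces precisely $E^{G}_1(c,\kappa)$ for $F\circ U$.

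The condition $W(v,\kappa)$ for $F\circ U$ is, in turn, nothing but the relative additivity of $F$ at $B$: unwinding Definition \ref{defstablin} for $\Phi=F\circ U$, with $\Phi(B)=F(B)$ and $\Phi(X)=F(X)$ ($F$ evaluated on underlying simplicial $G$-sets), the requirement ``$W(v_B,\kappa_B)$ for $F\circ U$'' is verbatim Definition \ref{reladd}. Replacing $\kappa$ and $\kappa_B$ by their pointwise maximum $\kappa'$, which only tightens the hypotheses of both conditions, we conclude that $F\circ U$ satisfies $E^{G}_1(c,\kappa')$ and $W(v_B,\kappa')$, so it is stably $G$-excisive, and we are done.

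The proof is essentially a matter of matching definitions. The one step that requires an argument is that $U$ transports strongly homotopy cocartesian squares along with the connectivities of their initial maps, and I expect this to be routine, since pushouts, cofibrations, and connectivities in $G\mbox{-}\mathcal{S}_B$ are all detected on underlying $G$-simplicial sets; the only thing to be careful about is keeping the two bookkeeping systems of connectivity functions ($\kappa$ from $E^{G}_1$, $\kappa_B$ from relative additivity) aligned.
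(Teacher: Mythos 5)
Your proposal is correct and follows essentially the same route as the paper: factor $\widetilde{F}_B=\widetilde{F\circ U}$ through the forgetful functor $U$, reduce to showing $F\circ U$ is stably $G$-excisive via Lemma \ref{reducedphi}, note that $U$ preserves homotopy cocartesian squares and connectivity so that $E^G_1(c,\kappa)$ transfers, and identify $W(v_B,\kappa_B)$ for $F\circ U$ with the relative additivity of $F$. The extra bookkeeping you include (that $F\circ U$ is a homotopy functor, and the alignment of $\kappa$ with $\kappa_B$) is fine and consistent with the paper's argument.
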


\begin{proof}
Consider the composite functor $G\mbox{-}\mathcal{S}_B\stackrel{U}{\longrightarrow} G\mbox{-}\mathcal{S}_\ast\stackrel{F}{\longrightarrow} \Sp^{\Sigma}_G$
where $U$ is the forgetful functor. As $\widetilde{F}_B=\widetilde{F\circ U}$, it is enough by Lemma \ref{reducedphi} to show that $F\circ U$ is stably $G$-excisive. The condition on squares for $F\circ U$ follows immediately from the condition on square for $F$, as $U$ preserves homotopy cocartesian squares and connectivity. It remains to show that $F\circ U$  satisfies $W(v_B,\kappa_B)$ for some functions $v_B,\kappa_B\colon\{H\leq G\}\rightarrow \mathbb{Z}$, but this is precisely the relative additivity condition for $F$.
\end{proof}

\begin{prop}\label{Fpresconn}
Let  $F\colon G\mbox{-}\mathcal{S}_\ast\rightarrow \Sp^{\Sigma}_G$ be a homotopy functor that satisfies $E_{1}^G(c,\kappa)$ and $W(v,\kappa)$ for some functions $c,v,\kappa\colon\{H\leq G\}\rightarrow \mathbb{Q}$.
If the reduced functor $\widetilde{F}_\ast\colon G\mbox{-}\mathcal{S}_\ast\rightarrow \Sp^{\Sigma}_G$ preserves connectivity above $\kappa$, so does $\widetilde{F}_B\colon G\mbox{-}\mathcal{S}_B\rightarrow \Sp^{\Sigma}_G$ for every $B$ in $G\mbox{-}\mathcal{S}_\ast$.
\end{prop}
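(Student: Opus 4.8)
The idea is to reduce the relative statement to the absolute one already in hand. Write $p\colon X\to B$ and $s\colon B\to X$ for the structure maps of a retractive space $(X,p,s)$, regarded as morphisms of $G\mbox{-}\mathcal{S}_\ast$, and let $q\colon X\to X/B$ collapse $s(B)$. Taking horizontal homotopy fibres in the commutative square of $G$-spectra with rows $F(p)$ and $\id_{F(\ast)}$ and columns $F(X)\to F(\ast)$ and $F(B)\to F(\ast)$ yields a natural equivalence $\widetilde{F}_B(X)\simeq\hofib\big(\widetilde{F}_\ast(X)\xrightarrow{\widetilde{F}_\ast(p)}\widetilde{F}_\ast(B)\big)$. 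Because $ps=\id_B$, the map $\widetilde{F}_\ast(p)$ is split by $\widetilde{F}_\ast(s)$; since in $\Sp^{\Sigma}_G$ fibre sequences are cofibre sequences (Remark \ref{fibhofib}), the splitting gives $\widetilde{F}_\ast(X)\simeq\widetilde{F}_B(X)\vee\widetilde{F}_\ast(B)$ with $\widetilde{F}_\ast(s)$ the summand inclusion, hence $\hofib(\widetilde{F}_\ast(s))\simeq\Omega\widetilde{F}_B(X)$ and $\conn_H\widetilde{F}_B(X)=\conn_H\widetilde{F}_\ast(s)$ for all $H$. So it suffices to bound the connectivity of the single map $\widetilde{F}_\ast(s)$.

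For this I would use the homotopy cocartesian square of $G\mbox{-}\mathcal{S}_\ast$
\[\xymatrix@R=13pt{B\ar[r]^-{s}\ar[d]&X\ar[d]^-{q}\\ \ast\ar[r]&X/B}\]
(cocartesian because $s$ is a cofibration), whose initial maps $s$ and $B\to\ast$ have connectivities $\conn s^H=\conn p^H-1$ and $\conn(B\to\ast)^H=\conn B^H+1$. By Lemma \ref{reducedphi} the reduced functor $\widetilde{F}_\ast$ inherits $E^{G}_1(c,\kappa)$ from $F$, so, provided $\conn p^H\ge\kappa(H)+1$ (and $B$ is connected enough that $B\to\ast$ is $\kappa$-connected, automatic when $\kappa\le 0$), the image square is $\nu$-homotopy cartesian with
\[\nu(H)=\min_{K\le H}\big(\conn p^K-1-c(K)\big)+\min_{K\le H}\big(\conn B^K+1-c(K)\big)\ge\min_{K\le H}\conn p^K+d_B(H),\]
where $d_B$ is a conjugacy-invariant function depending only on $B$ and $c$. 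As $\widetilde{F}_\ast(\ast)=\ast$, the canonical map out of $\chi_\emptyset=\widetilde{F}_\ast(B)$ has target $\hofib\big(\widetilde{F}_\ast(X)\xrightarrow{\widetilde{F}_\ast(q)}\widetilde{F}_\ast(X/B)\big)$ and is $\nu$-connected, and composing it with the fibre inclusion $\hofib(\widetilde{F}_\ast(q))\to\widetilde{F}_\ast(X)$ recovers $\widetilde{F}_\ast(s)$; since that inclusion is $\conn_H\widetilde{F}_\ast(X/B)$-connected, we obtain $\conn_H\widetilde{F}_\ast(s)\ge\min\{\nu(H),\conn_H\widetilde{F}_\ast(X/B)\}$.

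It remains to feed in the hypothesis. The $K$-fixed points $(X/B)^K$ are the cofibre of the cofibration $s^K\colon B^K\to X^K$, hence $(\conn p^K-1)$-connected, so as an object of $G\mbox{-}\mathcal{S}_\ast$ the space $X/B$ satisfies $\conn((X/B)\to\ast)^K\ge\conn p^K$; in particular $\conn p^H\ge\kappa(H)$ makes $X/B$ meet the connectivity hypothesis of ``$\widetilde{F}_\ast$ preserves connectivity above $\kappa$'', giving $\conn_H\widetilde{F}_\ast(X/B)\ge\min_{K\le H}\conn p^K+\lambda(H)$ with $\lambda$ the connectivity function of $\widetilde{F}_\ast$. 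Combining the three estimates, for every $(X,p,s)$ with $\conn p^H\ge\kappa_B(H):=\kappa(H)+1$ we get $\conn_H\widetilde{F}_B(X)=\conn_H\widetilde{F}_\ast(s)\ge\min_{K\le H}\conn p^K+\min\{d_B(H),\lambda(H)\}$, which is exactly the assertion that $\widetilde{F}_B$ preserves connectivity above $\kappa_B$ with connectivity function $\lambda_B=\min\{d_B,\lambda\}$ (and $\kappa_B$ is independent of $X$).

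The main obstacle is the middle step: getting the comparison between $\widetilde{F}_B(X)$ and $\widetilde{F}_\ast(X/B)$ quantitatively right using only the finite-range excision $E^{G}_1(c,\kappa)$. One must exploit the split structure so that the bounded contribution of $\conn B$ — coming from the basepoint vertex of the cocartesian square — enters only as a constant rather than as a term competing with $\conn p$, and one must keep track of the unit shifts among the connectivities of $p$, $s$ and $q$ so that they are absorbed into $\lambda_B$ instead of degrading the bound. Everything else is the standard bookkeeping of long exact sequences in equivariant homotopy groups.
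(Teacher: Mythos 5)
Your argument is correct and follows the same overall strategy as the paper's proof of Proposition \ref{Fpresconn} — reduce to the reduced functor via the iterated-fibre identification $\widetilde{F}_B\simeq(\widetilde{\widetilde{F}_\ast})_B$, apply $E^{G}_1(c,\kappa)$ to a homotopy cocartesian square relating $X$, $B$ and a cofibre, and feed the connectivity-preservation hypothesis into the cofibre term — but the intermediate choices differ. The paper cones off $X$: it compares $\hofib\widetilde{F}_\ast(p)$ with $\Omega\widetilde{F}_\ast(\hoc(p))$ in a range and reads off $\conn_H\widetilde{F}_B(X)=\conn_H\widetilde{F}_\ast(p)-1$. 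You cone off $B$: you estimate $\conn_H\widetilde{F}_\ast(s)$ via the cofibre $X/B$ and then use the stable splitting $\widetilde{F}_\ast(X)\simeq\widetilde{F}_\ast(B)\vee\hofib\widetilde{F}_\ast(p)$ to translate this back into $\conn_H\widetilde{F}_B(X)$; that extra splitting step is sound for spectra and is the only genuinely new ingredient. Both routes yield a bound $\min_{K\leq H}\conn p^K+\lambda_B(H)$ with $\lambda_B$ independent of $X$, which is all Definition \ref{presconn} asks for; your $\lambda_B$ picks up a $\min_K\conn B^K$ term that the paper's does not, but this is harmless since the connectivity function of $\widetilde{F}_B$ may depend on $B$. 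The one caveat is the one you flag yourself: the auxiliary initial map of your square is $B\to\ast$, of connectivity $\conn B^H+1$, and $E^{G}_1(c,\kappa)$ is only stated for squares whose initial maps are $\kappa$-connected. The paper's auxiliary map is $X\to\ast$, whose connectivity is at least $\min\{\conn B^H+1,\conn p^H\}$, so its square is never worse-positioned than yours, but neither argument verifies this hypothesis for arbitrary $\kappa$ and $B$; this is a shared imprecision rather than a defect particular to your route. Finally, $\conn s^H=\conn p^H-1$ should be $\geq$ (the section of a $\conn p^H$-connected map need not be exactly $(\conn p^H-1)$-connected), but since you only use it as a lower bound nothing is affected.
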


\begin{proof}
For every $(X,p,s)$ in $G\mbox{-}\mathcal{S}_B$ there is a natural $G$-equivalence $\widetilde{F}_B(X)\simeq (\widetilde{\widetilde{F}_\ast})_B(X)$. It is induced by the diagram
\[\xymatrix@=17pt{(\widetilde{\widetilde{F}_\ast})_B(X)\ar[r]\ar[d]&\widetilde{F}_\ast(X)\ar[d]\ar[r]&\widetilde{F}_\ast(B)\ar[d]\\
\widetilde{F}_B(X)\ar[r]\ar[d]&F(X)\ar[d]\ar[r]^-{F(p)}&F(B)\ar[d]\\
\ast\ar[r]&F(\ast)\ar@{=}[r]&F(\ast)
}\]
All the rows and the columns are fiber sequences, and therefore the top left vertical map is a $G$-equivalence. It is then enough to show that $(\widetilde{\widetilde{F}_\ast})_B$ preserves connectivity above $\kappa$. Given a retractive space $(X,p,s)$ in $G\mbox{-}\mathcal{S}_B$ with $\kappa(H)\leq\conn p^H$
the sequence
\[\widetilde{F}_\ast(X)\stackrel{\widetilde{F}_\ast(p)}{\longrightarrow} \widetilde{F}_\ast(B)\longrightarrow \widetilde{F}_\ast(\hoc(p))\]
induces a long exact sequence in $\pi^{H}_\ast$ up to degree
\[\begin{array}{ll}\nu(H)&= \min_{K\leq H}\left(\conn X^K+1-c(K)\right)+\min_{K\leq H}\left(\conn p^K-c(K)\right)\\
&\geq\min_{K\leq H}\conn p^K-2\max_{K\leq H}c(K)
\end{array}\]
 by condition $E^{G}_1(c,\kappa)$. Since $ \widetilde{F}_\ast$ preserves connectivity there is a function $\lambda$ such that
\[\conn_H \widetilde{F}_\ast(\hoc(p))\geq\min_{K\leq H}\conn \hoc(p)^K+\lambda(H)\geq \min_{K\leq H}\conn{p}^K+\lambda(H)\]
By setting $\overline{c}(H)=\max_{K\leq H}c(K)$ we get
\[\begin{array}{ll}\conn_H(\widetilde{\widetilde{F}_\ast})_B(X)&=\conn_H( \widetilde{F}_\ast(X)\longrightarrow  \widetilde{F}_\ast(B))-1\\
 &\geq \min_{K\leq H}\conn{p}^K-1+\min\{\lambda(H),-2\overline{c}(H)\}
\end{array}\]
\end{proof}


\subsection{$G$-analytic functors}\label{secanalytic}

We generalize Goodwillie's notion of analytic functors from \cite{calcII} to the $G$-equivariant setting, for a finite group $G$. We prove Theorem $B$ below, showing that $G$-analytic functors with trivial differentials (in fact derivatives) send highly connected split-surjective maps to $G$-equivalences. This is the equivariant analogue of Corollary \cite[5.4]{calcII}: ``functors with trivial derivative are locally constant''.

\begin{defn}\label{Ganalytic}
Let $\Phi\colon G\mbox{-}\mathcal{S}_B\rightarrow \Sp^{\Sigma}_G$ be a homotopy functor and $\rho\colon \{H\leq G\}\rightarrow \mathbb{Z}$ a function which is invariant on conjugacy classes.
We say that $\Phi$ is $G$-$\rho$-analytic if there are functions $q,v\colon \{H\leq G\}\rightarrow \mathbb{Z}$ such that for every $n\geq 1$ the functor $\Phi$ satisfies conditions $W(v,\rho+1)$ and $E_{n}^{G}(\rho-\frac{q}{n+1},\rho+1)$ of Definition \ref{defstablin}.
\end{defn}

\begin{rem}
Let us point out the difference in the choice of constants from Goodwillie's definition of $\rho$-analytic functors of \cite[4.2]{calcII} to the present definition for the trivial group $G=\{1\}$.
Functors that are $\{1\}$-$\rho$-analytic in our sense are precisely the classical $\rho$-analytic functors, but with a different constant $q$. The comparison with Goodwillie's $E_{n}(c,\kappa)$ condition is
\[E_{n}^{\{1\}}\big(\rho-\frac{q}{n+1},\rho+1\big)=E_{n}\big((n+1)\rho-q,\rho+1\big)=E_{n}\big(n\rho-(q-\rho),\rho+1\big)\]
Let us also remind that $E_{1}^{\{1\}}\big(\rho-\frac{q}{2},\rho+1\big)$ implies $W(2\rho-q,\rho+1)$ for the trivial group (cf. Remark \ref{remstablin}).
\end{rem}

\begin{ex}\label{analyticmap}
Let $Z$ be a finite $G$-CW-complex and suppose that the dimension of the fixed points $Z^G$ is at least one. Then the functor \[\mathbb{S}^G\wedge \map(Z,|-|)\colon G\mbox{-}\mathcal{S}_\ast\longrightarrow \Sp^{\Sigma}_G\]
is $G$-$\rho$-analytic, where $\rho$ is the dimension function $\rho(H)=\dim Z^H$.
An argument completely analogous to the non-equivariant case of \cite[4.5]{calcII} shows that our functor satisfies $E_{n}^{G}(\rho,0)$. Let us show the property $W(2\rho,0)$. Given a finite pointed simplicial $G$-set $X$, there is a commutative diagram
\[\xymatrix{\mathbb{S}^G\wedge\map(Z,|\bigvee_JX|)\ar[rr]\ar[d]&&\prod_J\mathbb{S}^G\wedge\map(Z,|X|)\\
\mathbb{S}^G\wedge\map(Z,\prod_J|X|)\ar[r]_-{\cong}&\mathbb{S}^G\wedge\big(\prod_J\map(Z,|X|)\big)\ar[ur]&\mathbb{S}^G\wedge\big(\bigvee_J\map(Z,|X|)\big)\ar[l]\ar[u]_{\simeq}
}\]
We need to calculate the connectivity of the top horizontal map. Since smashing with $\mathbb{S}^G$ preserves connectivity, it is enough to calculate the connectivity of the maps
\[\bigvee_J\map(Z,|X|)\rightarrow \prod_J\map(Z,|X|)\ \ \ \ \ \mbox{and}\ \ \ \ \ \  \map(Z,\bigvee_J|X|)\rightarrow \map(Z,\prod_J|X|)\]
By Lemma \ref{connwedgesitoprodtop} the first map is $\nu$-connected, for
\[\begin{array}{llll}\nu(H)&=
\min\big\{2\conn \map(Z,|X|)^H,\min_{K\lneqq H}\conn \map(Z,|X|)^K\big\}\\
&\geq\min\big\{2\min_{K\leq H}\{\conn X^K-\dim Z^K\},\min_{K\lneqq H}\min_{L\leq K}\{\conn X^L-\dim Z^L\}\big\}\\
&=\min\{2(\conn X^H-\dim Z^H),\min_{K\lneqq H}\conn X^K-\dim Z^K\}\\
&\geq \min\{2\conn X^H,\min_{K\lneqq H}\conn X^K\}-2\dim Z^H
\end{array}
\]
The last inequality holds because $\dim Z^K\geq\dim Z^H$ for $K\leq H$. Similarly, the second map is $\vartheta$-connected, for
\[\begin{array}{llll}\vartheta(H)
&=\min_{K\leq H}\big\{\conn (\bigvee_JX\rightarrow \prod_JX)^K-\dim Z^K\big\}\\
&\geq \min_{K\leq H}\big\{\min\{2\conn X^K,\min_{L\lneqq K}\conn X^L
\}\big\}-\dim Z^H\\
&\geq \min\big\{2\conn X^H,\min_{K\lneqq H}\conn X^K
\big\}-\dim Z^H
\end{array}
\]
The top horizontal map is then as connected as the minimum of these two quantities, which is precisely the range of $W(2\rho,0)$.
\end{ex}

\begin{thmB} Let $G$ be a finite group, and let $F\colon G\mbox{-}\mathcal{S}_\ast\rightarrow\Sp^{\Sigma}_G$ be an enriched homotopy functor (see Definition \ref{htpyfctr}) satisfying the following conditions.
\begin{enumerate}[label=\emph{\arabic*)}]
\item $F$ is $G$-$\rho$-analytic for a function $\rho\colon\{H\leq G\}\rightarrow \mathbb{Z}$,
\item $\widetilde{F}_\ast$ preserves connectivity above $\rho+1$ (cf. Definition \ref{presconn}),
\item $F$ is relatively additive (cf. Definition \ref{reladd}),
\item the spectrum  $D_BF(B\vee S^0)$ is weakly $G$-contractible for every $B$ in $G\mbox{-}\mathcal{S}_\ast$ (cf. \ref{reldiff}).
\end{enumerate}
Then for every split-surjective map $f\colon X\rightarrow B$ of finite pointed simplicial $G$-sets satisfying $\rho(H)+1\leq\conn f^H$, the induced map
\[F(f)\colon F(X)\longrightarrow F(B)\]
 is a $\pi_\ast$-equivalence of symmetric $G$-spectra.
\end{thmB}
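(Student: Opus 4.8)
The plan is to imitate Goodwillie's induction in \cite[\S5]{calcII}, carried out one subgroup at a time along a filtration of $G$ by size. Fix a split-surjection $f\colon X\to B$ with $\conn f^H\geq\rho(H)+1$ for all $H$. The key auxiliary quantity is, for a fixed integer $j$, the supremum $c(j)$ of all functions $\nu\colon\{H\le G\}\to\mathbb Z$ such that: for \emph{every} split-surjection $f\colon X\to B$ with $\conn f^H\ge j$ for all $H$, the map $\widetilde F_B(X)\to\ast$ — equivalently, the map $F(X)\to F(B)$ — is $\nu$-connected. The theorem asserts $c(\rho+1)=+\infty$. As in Goodwillie, one first establishes a finite bound on $c$ coming directly from $E_1^G$ and connectivity preservation, and then a \emph{self-improving} estimate: under the analyticity hypotheses one shows $c(j)$ at a subgroup $H$ is bounded below by something strictly larger than $c(j)$ itself — using the values of $c$ at proper subgroups $K\lneqq H$ as an inductive input — forcing $c(j)(H)=+\infty$ by iteration. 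Because the connectivity of a $G$-map is a function on subgroups, the induction is genuinely nested: one first proves the statement for $H=\{1\}$ (this is essentially the non-equivariant Goodwillie argument applied to underlying spectra), and then for each $H$ assuming it at all proper subgroups.

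The engine of the self-improvement is the ``cube of spaces'' construction. Given $f\colon X\to B$ that is $\rho+1$-connected, one forms, for each $n$, the $(n{+}1)$-cube built from iterated relative ``doubling'' of $X$ over $B$ — concretely the cube whose initial edges are copies of $f$ and which is strongly homotopy cocartesian with all initial maps $\conn$-controlled by $\conn f$; applying $F$ and using $E_{n+1}^G(\rho-\tfrac{q}{n+2},\rho+1)$ gives a $\nu$-cartesian cube of spectra. One compares this with the \emph{trivial} cube, whose total fiber is computed by the hypothesis that $D_BF(B\vee S^0)$ — and hence, using relative additivity together with Proposition~\ref{forgetstablylinear} so that $\widetilde F_B$ is stably $G$-linear and Proposition~\ref{derstablinislin} so that $D_BF$ is $G$-linear, the value of $D_BF$ on any retractive $G$-set over $B$ — is weakly $G$-contractible. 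The difference between the two cubes is measured by the maps $\widetilde F_{B'}(X')\to\ast$ for retractive spaces $X'\to B'$ that are \emph{more highly connected} than $f$, or defined over a base with smaller isotropy, so their connectivity is estimated using $c$ at a better argument or at proper subgroups — this is exactly what produces the strict gain. The refined connectivity estimate of Remark~\ref{rightestimate}(1) for the map $\widetilde F_B\to D_B F$, which feeds in $\conn_K$ of $\widetilde F_B$ at \emph{proper} $K\lneqq H$, is the technical device that makes the inductive use of proper subgroups work.

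A clean way to package the numerics is the standard ``rational recursion'' of Goodwillie: one shows an inequality of the form $c(\rho+1)(H)\ge \min\{\,2\,c(\rho+1)(H) - (\text{const}),\ \min_{K\lneqq H}(\text{something}\to\infty),\ (\text{ranges}\to\infty \text{ in } n)\,\}$, where the first term dominates precisely when $c(\rho+1)(H)$ is finite, yielding a contradiction unless it is $+\infty$. Here one uses $W(v,\rho+1)$ (i.e.\ relative additivity with controlled range) to replace relative products by honest products up to high connectivity, so that doubling over $B$ behaves additively, and one uses that $\widetilde F_\ast$ preserves connectivity above $\rho+1$ (hypothesis 2, promoted to $\widetilde F_B$ by Proposition~\ref{Fpresconn}) to guarantee that the intermediate retractive spaces appearing in the cube stay in the range where all the estimates apply. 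The last sentence of the theorem, that $F(X)$ is $G$-contractible when $X$ is $\rho$-connected, follows by taking $B=\ast$: then $X\to\ast$ is split-surjective and $\rho+1$-connected after one suspension, or more directly $X\to\ast$ with $\conn X^H\ge\rho(H)+1$ lands in the hypothesis, and reducedness of $\widetilde F_\ast$ together with $F(\ast)=D_\ast F(\ast)$ being contractible gives $F(X)\simeq\ast$.

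\textbf{Main obstacle.} The genuinely hard part is the bookkeeping of the \emph{equivariant} connectivity functions through the cube construction: keeping track, subgroup by subgroup, of how $\conn e_i^K$ degrades under iterated relative doubling and relative suspension, and verifying that the loss is always by a bounded amount independent of $n$ and of how many times one doubles, so that the $E_{n+1}^G$ ranges — which grow like $n|G/H|$ minus a bounded correction — genuinely diverge. Equivalently: organizing the nested induction on $|H|$ so that at stage $H$ one only ever invokes $c$ at proper subgroups or at strictly larger connectivity arguments, never circularly. Once the combinatorics of these functions is set up correctly, the homotopy-theoretic content is a direct transcription of Goodwillie's proof with ``$\holim$ over punctured cube'' read in $\Sp^\Sigma_G$ and all connectivities read as functions on the subgroup lattice.
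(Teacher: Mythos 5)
Your plan follows the paper's proof in all essentials: the nested induction on subgroups $L\leq G$ ordered by size, with base case the non-equivariant argument of \cite[5.4]{calcII}; the doubled cube obtained by iterated pushouts along one initial edge, whose new edge admits a fold retraction that is one unit more connected, analyzed using the higher conditions $E_{n+1}^{G}$; the propagation of the contractibility of $D_BF(B\vee S^0)$ to $D_BF(X)$ for every retractive $X$ via $G$-linearity of $D_BF$ (Propositions \ref{forgetstablylinear} and \ref{derstablinislin}, Lemma \ref{inductionderarpoint}); and the refined estimate of Remark \ref{rightestimate}, whose proper-subgroup terms become infinite by the inductive hypothesis. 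The one point where your packaging would not close as literally stated is the object of the self-improving recursion. You run it directly on the optimal connectivity $c(j)$ of the maps $F(f)$ and posit an inequality $c\geq\min\{2c-\mathrm{const},\dots\}$; but the linear-approximation estimate (Proposition \ref{connlinapprox}, Remark \ref{rightestimate}) bounds the connectivity of $F(f)$ in terms of $\conn f$ and the constant appearing in $E^{G}_1(c,\kappa)$, not in terms of $c(j)$ itself, so the connectivity of $F(f)$ cannot be fed back into its own estimate. The paper instead runs the improvement on the functions $r$ in the conditions $E^{G}_n(\rho-\frac{r}{n+1},\rho+1)$, simultaneously for all $n\geq 1$ (improving the $n$-cube condition uses the $(n+1)$-cube condition at the previous stage), pushing $r(L)$ to $+\infty$ one unit at a time via the iterated cubes $\mathcal{Y}^{(m)}$, which are roughly $(2m+\mathrm{const})$-cartesian against a required $(c+m)$; only after $r(L)=\infty$ does Remark \ref{rightestimate} give that $F(f)$ is an $L$-equivalence. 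With the recursion transferred to the excision constants, your plan coincides with the paper's argument.
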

Theorem $B$ for the projection map $X\rightarrow\ast$ immediately gives the following.
\begin{cor}\label{corA}
Suppose that $F\colon G\mbox{-}\mathcal{S}_\ast\rightarrow\Sp^{\Sigma}_G$ satisfies the conditions of Theorem $B$, and that it is reduced. Then it sends every finite pointed simplicial $G$-set $X$ with $\rho(H)\leq\conn X^H$ to a weakly $G$-contractible spectrum.
\end{cor}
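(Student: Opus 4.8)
The plan is simply to feed the terminal map $f\colon X\to\ast$ into Theorem $B$. The hypotheses of the corollary say that $F$ satisfies conditions $1)$–$4)$ of Theorem $B$ and in addition is reduced, so the entire content lies in checking that $f\colon X\to\ast$ meets the requirements of Theorem $B$ and then reading off what the conclusion gives. First I would observe that $f$ is a split-surjective equivariant map of finite pointed simplicial $G$-sets: the unique pointed map $\ast\to X$ is a $G$-equivariant section of $f$. Thus $f$ belongs to the class of maps handled by Theorem $B$, provided the connectivity bound holds.

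The only point requiring a moment's care is the translation between the two connectivity conventions in play. For each subgroup $H\leq G$ the fixed-point map $f^H\colon X^H\to\ast$ has homotopy fibre $X^H$, so it is $(\conn X^H+1)$-connected; that is, $\conn f^H=\conn X^H+1$. Consequently the corollary's assumption $\rho(H)\leq\conn X^H$ for every $H$ is literally the assumption $\rho(H)+1\leq\conn f^H$ demanded by Theorem $B$.

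Applying Theorem $B$ to $f$ then shows that $F(f)\colon F(X)\to F(\ast)$ is a $\pi_\ast$-equivalence of symmetric $G$-spectra, hence induces isomorphisms on $\pi^K_\ast$ for every subgroup $K\leq G$. Since $F$ is reduced, $F(\ast)$ is weakly $G$-contractible, i.e.\ $\pi^K_\ast F(\ast)=0$ for all $K\leq G$, and therefore $\pi^K_\ast F(X)=0$ for all $K\leq G$; that is, $F(X)$ is weakly $G$-contractible. There is no genuine obstacle in this argument: all the analytic input is already packaged into Theorem $B$, and what remains is only the bookkeeping just described.
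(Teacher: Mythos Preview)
Your proof is correct and follows exactly the same approach as the paper, which simply states that Theorem $B$ applied to the projection map $X\to\ast$ immediately gives the corollary. You have merely unpacked the one-line justification by making explicit the splitting, the connectivity translation $\conn f^H=\conn X^H+1$, and the use of the reduced hypothesis.
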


The proof of Theorem $B$ uses a lemma relating the differential of $F$ and its derivative. If $F$ is stably $G$-linear and relatively additive, the functor $D_BF(B\vee(-))\colon G\mbox{-}\mathcal{S}_\ast\rightarrow\Sp^{\Sigma}_G$ is $G$-linear for every $B$ in $G\mbox{-}\mathcal{S}_\ast$, and it is therefore determined by its value at $S^0$ (cf. Corollary \ref{corlinearsmashspectrum}). The spectrum $D_BF(B\vee S^0)$ is called the derivative of $F$ at $B$ in Goodwillie calculus (cf. \cite{calcI}) and sometimes denoted $\partial_BF$.

\begin{lemma}\label{inductionderarpoint}
Let $F\colon G\mbox{-}\mathcal{S}_\ast\rightarrow\Sp^{\Sigma}_G$ be a relatively additive $G$-$\rho$-analytic homotopy functor. If the spectrum  $D_BF(B\vee S^0)$ is weakly $G$-contractible, then  $D_BF(X)$ is  weakly $G$-contractible for every $(X,p,s)$ in $ G\mbox{-}\mathcal{S}_B$. 
\end{lemma}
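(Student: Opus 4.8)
\emph{The plan.} I would first show that $D_BF$ is $G$-linear, then evaluate it on all of $G\mbox{-}\mathcal{S}_B$ by an induction over a relative cell structure, feeding in the hypothesis through Corollaries \ref{corlinearsmashspectrum} and Proposition \ref{linearsmashspectrum}. A $G$-$\rho$-analytic functor satisfies $E^{G}_1(\rho-\tfrac{q}{2},\rho+1)$ and $W(v,\rho+1)$, so it is stably $G$-excisive; since $F$ is moreover relatively additive, Proposition \ref{forgetstablylinear} gives that $\widetilde{F}_B$ is stably $G$-linear, hence that $D_BF=D\widetilde{F}_B$ is $G$-linear. Precomposing with $B\vee(-)\colon G\mbox{-}\mathcal{S}_\ast\to G\mbox{-}\mathcal{S}_B$, which preserves the zero object, homotopy cocartesian squares, and the fiberwise indexed wedges $(-)\wedge_B(J_+\times B)$, shows that $D_BF(B\vee(-))$ is $G$-linear on $G\mbox{-}\mathcal{S}_\ast$; by Corollary \ref{corlinearsmashspectrum} it is naturally equivalent to $D_BF(B\vee S^0)\wedge|-|$, so the hypothesis that $D_BF(B\vee S^0)$ is weakly $G$-contractible yields $D_BF(B\vee K)\simeq\ast$ for every pointed finite simplicial $G$-set $K$.

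Next I would filter a general $X$ in $G\mbox{-}\mathcal{S}_B$ by the simplicial skeleta relative to $B$, $B=X_0\subseteq X_1\subseteq\dots\subseteq X_m=X$, where $X_k$ adjoins to $X_{k-1}$ the $G$-orbits of the non-degenerate $k$-simplices of $X$ not in $B$. Collapsing $X_{k-1}$ onto $B$ inside $X_k$ produces a homotopy cocartesian square in $G\mbox{-}\mathcal{S}_B$ with bottom row $B\to Q_k$, where $Q_k/B$ is a wedge of orbit $k$-spheres; applying the $G$-linear functor $D_BF$ and using $D_BF(B)\simeq\ast$ turns this into a fiber sequence $D_BF(X_{k-1})\to D_BF(X_k)\to D_BF(Q_k)$. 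Since the attaching maps of the adjoined cells become null-homotopic after projecting to $B$ (they extend over the contractible cones that furnish the retraction to $B$), $Q_k$ is equivalent over $B$ to a fiberwise smash $T_k\wedge_B(S^k\times B)$, with $T_k$ the $0$-dimensional retractive space recording the ``supports'' of the cells; by Proposition \ref{linearsmashspectrum} one then has $D_BF(Q_k)\simeq D_BF(T_k)\wedge S^k$. Hence, if $D_BF$ vanishes on retractive spaces $T$ with $\dim(T/B)=0$, all the $D_BF(Q_k)$ are weakly $G$-contractible, all the maps $D_BF(X_{k-1})\to D_BF(X_k)$ are $\pi_\ast$-equivalences, and $D_BF(X)\simeq D_BF(B)\simeq\ast$.

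The remaining reduction — vanishing of $D_BF$ on a $T$ with $\dim(T/B)=0$ — is the step I expect to be the main obstacle. Such a $T$ has the form $B\sqcup D$ for a finite $G$-set $D$ equipped with a structure map $D\to B$; when this map has image in the basepoint component, $T$ is $G$-equivalent over $B$ to $B\vee D_+$ and we are done by the first paragraph, but in general $T$ is ``$S^0$ supported on a proper sub-$G$-set of $B$'' and is genuinely new. Here I would run a secondary induction on the number of simplices of the base: writing $T=i_!(E)$ for the extension-by-basepoint functor $i_!$ along the inclusion $i\colon G/H\hookrightarrow B$ of an orbit in the image of $D\to B$ and $E$ the evident split object over $G/H$, one uses that $i_!$ commutes with the fiberwise suspension $S^{n\rho}_{(-)}$ and that the analyticity of $F$ makes it carry the relevant highly connected pushout squares to highly homotopy cartesian ones; this identifies $D_BF\circ i_!$ with the differential of $F$ taken at the smaller base $G/H$ (equivalently $(G/H)_+$), applied to the corresponding split object. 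Iterating lowers the base until one reaches $B=\ast$, where $0$-dimensionality of $T/\ast$ forces $T\simeq\ast\vee K$ and Corollary \ref{corlinearsmashspectrum} applies. This argument consumes the weak contractibility of $D_{B'}F(B'\vee S^0)$ for the auxiliary bases $B'$ arising along the way — that is, the ``for every $B$'' strengthening of the hypothesis, which is precisely condition 4) of Theorem $B$ under which the lemma is invoked.
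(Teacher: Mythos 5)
Your first two paragraphs are precisely the paper's proof. The paper likewise deduces from Proposition \ref{forgetstablylinear} that $D_BF=D\widetilde{F}_B$ is $G$-linear, then kills $D\widetilde{F}_B\big(B\vee(S^n\wedge J_+)\big)$ for all $n$ and all finite $G$-sets $J$ — by an explicit induction over $n$ using the disc square $B\vee S^{n-1}\to B\vee D^n$ together with the indexed wedge-to-product axiom, which is the same content as your appeal to Corollary \ref{corlinearsmashspectrum} for the functor $D_BF(B\vee(-))$ — and then runs an induction over the relative $G$-skeleton of $(X,B)$, turning each cocartesian square into a fiber sequence with contractible ends.

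The divergence is your third paragraph, which has no counterpart in the paper: the paper takes the base case of the skeletal induction to be $X^{(0)}=B\vee J_+$ and the subquotients to be $B\vee(S^{n+1}\wedge J_+)$, i.e.\ it places every relative cell at the basepoint of $B$. You are right to flag that a relative $0$-cell of $(X,B)$ is a priori a disjoint orbit with an arbitrary structure map to $B$, and that the pushout $X^{(n+1)}\cup_{X^{(n)}}B$ is $B$ with orbit-spheres wedged on along points of $B$ that need not lie over the basepoint; the hypothesis $D_BF(B\vee S^0)\simeq\ast$ does not formally control such objects (for a general $G$-linear functor on $G\mbox{-}\mathcal{S}_B$, Proposition \ref{linearsmashspectrum} determines the values on $K\times B$ from $\Phi(S^0\times B)$, not from $\Phi(B\vee S^0)$, and these are different objects of $G\mbox{-}\mathcal{S}_B$). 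So you have located a real elision — but your repair does not close it as written. Concretely: (a) the iteration over smaller bases does not terminate as claimed, since after one step the base is $(G/H)_+$ and the object is again concentrated on the non-basepoint part $G/H$, so ``iterate until $B=\ast$'' makes no progress; what is actually needed there is the identification of retractive objects over $(G/H)_+$ supported on $G/H$ with pointed $H$-objects via the fiber over $eH$, reducing to a derivative at a point of the restricted functor — a change-of-group step the paper's framework only partially supports; (b) the equivalence $D_BF\circ i_!\simeq D_{(G/H)_+}F(\cdot)$ requires the stable-excision estimate for the squares $G/H\to S^{n\rho}_{G/H}E$, $G/H\to B$ to improve linearly with $n$, which should be spelled out; and (c) you consume the contractibility of $D_{B'}F(B'\vee S^0)$ for auxiliary $B'$, so you would be proving a variant of the lemma (sufficient for Theorem $B$, where condition 4) is quantified over all $B$) rather than the lemma as literally stated for a single $B$.
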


\begin{proof}
We recall that under these hypotheses $D_BF=D\widetilde{F}_B$ is $G$-linear (see Proposition \ref{forgetstablylinear}).
We start by proving the lemma for equivariant spheres.
By hypothesis $D\widetilde{F}_B(B\vee S^0)$ is weakly $G$-contractible. By induction, $D\widetilde{F}_B(B\vee S^n)$ is also weakly $G$-contractible since by $G$-linearity of $D\widetilde{F}_B$ the diagram
\[\xymatrix{D\widetilde{F}_B(B\vee S^{n-1})\ar[r]\ar[d]&D\widetilde{F}_B(B\vee D^n)\simeq \ast\ar[d]\\
\ast\simeq D\widetilde{F}_B(B\vee D^n)\ar[r]&D\widetilde{F}_B(B\vee S^{n})
}\]
is homotopy (co)cartesian. 
If $X=B\vee(S^{n}\wedge J_+)$ for a finite $G$-set $J$, the map 
\[D\widetilde{F}_B(B\vee(S^{n}\wedge J_+))=D\widetilde{F}_B\big((B\vee S^n)\wedge_B(J_+\times B)\big)\longrightarrow \prod_JD\widetilde{F}_B(B\vee S^n)\]
is a  $G$-equivalence by $G$-linearity of $D\widetilde{F}_B$, with weakly $G$-contractible target. Therefore $D\widetilde{F}_B(B\vee(S^{n}\wedge J_+))$ is also weakly $G$-contractible.

Now that the lemma is proved for spheres, we prove that $D\widetilde{F}_B(X)$ is weakly $G$-contractible by induction on the relative $G$-skeleton of the pair $(X,B)$.
The base induction step is when $X=B\vee J_+$ for a finite $G$-set $J$, which is the $n=0$ case already proved above.
Now suppose inductively that the image of the $n$-skeleton is weakly $G$-contractible. By $G$-linearity of $D\widetilde{F}_B$ the sequence
\[D\widetilde{F}_B(X^{(n)})\longrightarrow D\widetilde{F}_B(X^{(n+1)}) \longrightarrow D\widetilde{F}_B\big(B\vee(S^{(n+1)}\wedge J_+)\big) \]
induced by the homotopy cocartesian square
\[\xymatrix{X^{(n)}\ar[r]\ar[d]& X^{(n+1)}\ar[d]\\
B\ar[r]&B\vee(S^{(n+1)}\wedge J_+)
}\]
is a fiber sequence with first and last terms weakly $G$-contractible.
\end{proof}

\begin{rem}
In the classical theory of calculus of functors one needs to require $F$ to satisfy the ``limit axiom'', in order to carry out the induction argument of Lemma \ref{inductionderarpoint}. Proving this axiom for explicit examples can require a considerable amount of work, see e.g. \cite[\S 2]{McCarthy}. Here we do not need to worry about this condition, since our theory considers only finite simplicial sets. The analogous equivariant condition would be that $F$ commutes with filtered homotopy colimits of pointed simplicial $G$-sets.
\end{rem}

\begin{proof}[Proof of Theorem $B$]
We generalize Goodwillie's proof of \cite[5.4]{calcII} to our equivariant setting.
As $F$ is $G$-$\rho$-analytic, it satisfies $W(v,\rho+1)$ and $E_{n}^{G}(\rho-\frac{q}{n+1},\rho+1)$ for certain functions $q,v\colon \{H\leq G\}\rightarrow \mathbb{Z}$ and for every $n\geq 1$.
We prove the following statement $\mathcal{I}(L)$ by induction on the size of the subgroups $L$ of $G$.
\begin{quote} $\mathcal{I}(L)$: For every $n\geq 1$ the functor $F$ satisfies $E_{n}^G(\rho-\frac{r}{n+1},\rho+1)$ for the function
\[r(H)=\left\{\begin{array}{ll}q(H)&\mbox{for }H\not\le L\\
\infty&\mbox{for }H\leq L
\end{array}\right.\]
Moreover $F(f)$ is an $L$-equivalence for any split-surjective $G$-map $f$ satisfying $\rho(H)+1\leq \conn f^H$ for subgroups $H\leq L$. 
\end{quote}
The statement $\mathcal{I}(G)$ contains in particular our theorem.

The base induction step $\mathcal{I}(\{1\})$ is essentially the  proof of Corollary \cite[5.4]{calcII}. One needs to make sure that all the constructions  of  \cite{calcII} carry a $G$-action, but the final statement is about a $\{1\}$-equivalence (the induction step below also goes through the proof of \cite{calcII} once more).

Now assume inductively that $\mathcal{I}(K)$ holds for subgroups $K$ of $G$ of size $|K|\leq l$, and let $L$ be a subgroup of $G$ with $l+1$ elements. For proper subgroups $K\lneqq L$ the value $r(K)$ is already infinite as $\mathcal{I}(K)$ is satisfied. Hence we need to improve the value of $r$ at the group $L$ itself. We do it inductively by proving that if $F$ satisfies  $E_{n}^G(\rho-\frac{r}{n+1},\rho+1)$ for all $n\geq 1$ and a function $r$ with $r(H)=\infty$ on groups $H\lneqq L$, then it satisfies $E_{n}^G(\rho-\frac{r'}{n+1},\rho+1)$ for all $n\geq 1$ with
\[r'(H)=\left\{\begin{array}{ll}r(H)&\mbox{for }H\neq L\\
r(H)+1&\mbox{for }H= L
\end{array}\right.\]
This will prove the first part of $\mathcal{I}(L)$.
Let $\chi\colon \mathcal{P}(\underline{n+1})\rightarrow  G\mbox{-}\mathcal{S}_\ast$ be a strongly cocartesian $(n+1)$-cube with initial maps $e_i$ satisfying $\rho(H)+1\leq\conn e_{i}^H$. Since $F$ is a homotopy functor we can assume that all the $e_i$'s are $G$-cofibrations.
We need to show that the cube $F(\chi)$ is at least
\[c(L):=\sum_{i=1}^{n+1}\min_{K\leq L}\big(\conn e_{i}^K-(\rho(K)-\frac{r'(K)}{n+1})\big)\]
cartesian in $\pi_{\ast}^L$. Since $r'(H)=r(H)=\infty$ for subgroups $H\lneqq L$ this is
\[c(L)=\sum_{i=1}^{n+1}\big(\conn e_{i}^L-(\rho(L)-\frac{r'(L)}{n+1})\big)=-(n+1)\rho(L)+r'(L)+\sum_{i=1}^{n+1}\conn e_{i}^L\]
Let $Z\colon\!\mathcal{P}(\underline{n\!+\!2})\rightarrow  G\mbox{-}\mathcal{S}_\ast$ be the strongly cocartesian $(n+2)$-cube with initial maps $e_1,e_1,e_2,\dots,e_{n+1}$, defined by taking iterated pushouts along $e_1$. The cube $Z$ defines a map of $(n+1)$-cubes $Z\colon\chi\rightarrow \overline{\chi}$ in the direction of the repeated map $e_1$. By \cite[1.6 i)]{calcII} the cube $F(\chi)$ is $c$-cartesian if both $F(Z)$ and $F(\overline{\chi})$ are. By assumption $F$ satisfies $E_{n+1}^G(\rho-\frac{r}{n+2},\rho+1)$, and therefore $F(Z)$ is $\nu$-cartesian, where the value of $\nu$ at $L$ is
\[\begin{array}{lll}\nu(L)&=\min_{K\leq L}\big(\conn e_{1}^K-(\rho(K)-\frac{r(K)}{n+2})\big)+\\
&+\sum_{i=1}^{n+1}\min_{K\leq L}\big(\conn e_{i}^K-(\rho(K)-\frac{r(K)}{n+2})\big)\\
&=\conn e_{1}^L-(\rho(L)-\frac{r(L)}{n+2})+\sum_{i=1}^{n+1}\big(\conn e_{i}^L-(\rho(L)-\frac{r(L)}{n+2})\big)\\
&=\sum_{i=1}^{n+1}(\conn e_{i}^L)+\conn e_{1}^L-(n+2)\rho(L)+r(L)\\
&\geq\sum_{i=1}^{n+1}(\conn e_{i}^L)-(n+1)\rho(L)+r(L)+1=c(L)
\end{array}\]
The inequality holds since  $\conn e_{1}^L\geq\rho(L)+1$. Therefore it is enough to show that $F(\overline{\chi})$ is $c$-cartesian. Since $\overline{\chi}$ is defined from $\chi$ by iterating pushouts, its initial maps $\overline{e_i}$ are $G$-cofibrations and they satisfy
\[\rho(H)+1\leq\conn e_{i}^H\leq\conn \overline{e}_{i}^H\]
Moreover $\overline{e}_1$ has a canonical $G$-equivariant retraction $p^{(1)}$ defined by the fold map, as illustrated by the following diagram for the case $n=1$:
\[\begin{tabular}{>{$}l<{$}}\chi\left\{\begin{tabular}{l}\\ \\ \\ \\ \\
\end{tabular}\right.
\\
\\
\\
\end{tabular}
\vcenter{\hbox{\xymatrix@=16pt{A\ \ \ \ar@{>->}[rr]^{e_1}
\ar[dd]_{e_2}
\ar@{>->}[dr]^*!/d1pt/{\labelstyle e_1}&&B\ \ \ar@<.5ex>@{>->}[dr]\ar@{-}[d]\\
&B\ \ \ar@{>->}[rr]_<<<<<<<<<<<<<<<{\overline{e}_{1}}\ar[dd]_<<<<<{\overline{e}_2}&\ar[d]&
B\amalg_AB\ar@<.2ex>@/_1.5pc/[ll]_>>>>*!/d1pt/{\labelstyle  p^{(1)}}\ar[dd]&\\
C\ar@{-}[r]\ar[dr]&\ar[r]&D\ar@<.5ex>[dr]\\
&D\ar[rr]&&D\amalg_CD\ar@<.2ex>@/_1.5pc/[ll]
}}}\hspace{-.7cm}
\begin{tabular}{>{$}l<{$}}\\
\\
\left.\begin{tabular}{l}\\ \\ \\ \\ \\
\end{tabular}\right\}\overline{\chi}
\end{tabular}
\]
The retraction is natural, and hence it defines a retraction for the map of $n$-cubes defined by $\overline{e}_1$.
Call the $(n+1)$-cube defined by the retractions $\mathcal{Y}^{(1)}$:
\[\xymatrix{B\ar[d]_-{\overline{e}_2}\ar[r]^-{\overline{e}_1}&B\amalg_A B\ar[r]^-{p^{(1)}}\ar[d]&B\ar[d]\\
D\ar[r]_{\underbrace{\phantom{aaaaaaaaa}}_{\mathlarger{\mathlarger{\overline{\chi}}}}}&D\amalg_CD
\ar[r]_{\underbrace{\phantom{aaaaaaaaa}}_{\mathlarger{\mathlarger{\mathcal{Y}^{(1)}}}}}&D
}
\]
The cube $\mathcal{Y}^{(1)}$ is strongly cocartesian by \cite[1.8 iv)]{calcII}.
Moreover $F(\overline{\chi})$ (and hence $F(\chi)$) is $c$-cartesian if we can prove that $F(\mathcal{Y}^{(1)})$ is $(c+1)$-cartesian. This is because as maps of $n$-cubes $\mathcal{Y}^{(1)}\circ \overline{\chi}=\id$ (see \cite[1.8 iii)]{calcII}). What we gain by replacing $F(\chi)$ with $F(\mathcal{Y}^{(1)})$ is that the connectivity of its first initial map $p^{(1)}$ is
\[\conn (p^{(1)})^H=\conn \overline{e}_{1}^H+1\geq \conn e_{1}^H+1\]
since $p^{(1)}$ is a retraction for $\overline{e}_{1}$.
For the other indices $i>1$, the connectivity of the $i$-th initial map of $F(\mathcal{Y}^{(1)})$ is greater or equal to the connectivity of $\overline{e}_i$, and hence greater or equal to the connectivity of $e_i$. Let us calculate how cartesian $F(\mathcal{Y}^{(1)})$ by exploiting that $p^{(1)}$ is more connected than $e_1$.
As a map of $n$-cubes, $F(\mathcal{Y}^{(1)})$ is pointwise as connected as the map $ F(p^{(1)})$.
Recall that if the map $F(p^{(1)})$ is $\nu$-connected, then $F(\mathcal{Y}^{(1)})$ is $(\nu-n)$-cartesian. Hence we need to determine the connectivity of $F(p^{(1)})$. For every proper subgroup $K\lneqq L$ the map $F(p^{(1)})$ is a $K$-equivalence by the property $\mathcal{I}(K)$, which is satisfied by the inductive hypothesis ($p^{(1)}$ is split surjective). We use the linear approximation to determine how connected $F(p^{(1)})$ is at the group $L$. Let us denote by $A$ and $B$ the source and the target of $p^{(1)}$ respectively, and we remark that $(A,p^{(1)},\overline{e}_1)$ defines an object of $G$-$\mathcal{S}_B$. Remark \ref{rightestimate} gives an estimate for the connectivity of the map
\[\hofib F(p^{(1)})=\widetilde{F}_B(A)\longrightarrow D\widetilde{F}_B(A)=D_BF(A)\]
and by Lemma \ref{inductionderarpoint} the target is weakly $G$-contractible. Hence the map $F(p^{(1)})$ is (by \ref{rightestimate})
\[\nu^{(1)}(L)= \min\left\{2\big(\conn (p^{(1)})^L-\frac{(\rho-r)(L)}{2}\big),\min_{K\lneqq H}\conn_K\widetilde{F}_B(S^{k\rho}_B p^{(1)})-k|G/K|\right\}\]
connected in $\pi_{\ast}^L$, for a sufficiently large choice of $k$. By the inductive hypothesis the map $F(S^{k\rho}_B p^{(1)})$ is a $K$-equivalence for subgroups $K\lneqq L$. The second term of the minimum above is then infinite, and $ F(p^{(1)})$ is
\[\nu^{(1)}(L)= 2\conn (p^{(1)})^L-(\rho-r)(L)=2\big(\conn \overline{e}_{1}^L+1\big)-(\rho-r)(L)\]
connected in $\pi^{L}_\ast$. This shows that $F(\mathcal{Y}^{(1)})$ is $(\nu^{(1)}-n)$-cartesian, where we define $\nu^{(1)}(K)=\infty$ on proper subgroups $K\lneqq L$. This is not necessarily larger than $c+1$, which is what we were trying to show. However, one can repeat this whole construction by replacing $\chi$ with $\mathcal{Y}^{(1)}$. If we keep iterating this procedure we obtain a sequence of $(n+1)$-cubes $\mathcal{Y}^{(m)}$ with the property that $F(\chi)$ is $c$-cartesian if $F(\mathcal{Y}^{(m)})$ is $(c+m)$-cartesian, and with $F(\mathcal{Y}^{(m)})$ at least $(\nu^{(m)}-n)$-cartesian for the function
\[\nu^{(m)}(K)=\left\{\begin{array}{ll} 2\big(\conn \overline{e}_{1}^K+m\big)-(\rho-r)(L)&\mbox{for }K=L\\
\infty&\mbox{for }K\lneqq L
\end{array}\right.\]
This quantity can be made bigger than $c(K)+m$ by choosing $m$ sufficiently large, and therefore $F(\chi)$ is $c$-cartesian. This proves the first part of the condition $\mathcal{I}(L)$.

It remains to show that $F(f)$ is an $L$-equivalence for a highly connected split-surjective map $f$. By the first part of $\mathcal{I}(L)$ the functor $F$ satisfies $E_{1}^G(\rho-\frac{r}{2},\rho+1)$ with $r$ infinite on all subgroups $H\leq L$. Since $D\widetilde{F}_B$ is weakly $G$-contractible, we find from Remark \ref{rightestimate} that $F(f)$ is $\nu$-connected, for
\[\nu(H)=\min_{K\lneqq H}\conn_K\widetilde{F}_B(S^{k\rho}_Bf)-k|G/K|
\]
on subgroups $H\leq L$. By the condition $\mathcal{I}(K)$, the map $\widetilde{F}_B(S^{k\rho}_Bf)$ is a $K$-equivalence, and therefore $\nu(H)$ is infinite for every  $H\leq L$. This shows that $F(f)$ is an $L$-equivalence, ending the proof.
\end{proof}


\section{$\mathbb{Z}/2$-equivariant calculus and Real algebraic $K$-theory}\label{analKR}

The goal of this section is to construct our main example of $\mathbb{Z}/2$-analytic functor from Real algebraic $K$-theory, and calculate its $\mathbb{Z}/2$-equivariant derivative.
We recall from \cite{Wall} that a Wall antistructure is a triple $(A,w,\epsilon)$ where $A$ is a ring, $w\colon A^{op}\rightarrow A$ is a ring map and $\epsilon\in A^\times$ is a unit, with the property that $w^2$ is conjugation by $ \epsilon$. The Real $K$-theory of $(A,w,\epsilon)$ is a symmetric $\mathbb{Z}/2$-spectrum $\KR(A,w,\epsilon)$ defined by Hesselholt and Madsen in \cite{IbLars}, with underlying spectrum equivalent to $\K(A)$. We recall its construction in details in Section \ref{KRsec} below.

\begin{defn}\label{bimodule}
A bimodule over a Wall antistructure $(A,w,\epsilon)$ is an $A$-bimodule $M$ together with an additive map $h\colon M\rightarrow M$ which satisfies the following conditions
\[\begin{array}{lll}
h(a\cdot m)=h(m)\cdot w(a)\\
h(m\cdot a)=w(a)\cdot h(m)\\
h^{2}(m)=\epsilon\cdot m\cdot\epsilon^{-1}
\end{array}\]
for every $a$ in $A$ and every $m$ in $M$.
\end{defn}

From a bimodule $(M,h)$ over $(A,w,\epsilon)$ we define a Wall antistructure $\big(A\ltimes M,w\ltimes h,(\epsilon,0)\big)$, where the semi-direct product ring $A\ltimes M$ has underlying Abelian group $A\oplus M$ and multiplication
\[(a,m)\cdot (b,n)=(a\cdot b,a\cdot n+m\cdot b)\]
and the map $w\ltimes h$ is the direct sum of $w$ and $h$.
Given a finite pointed set $X$, define a bimodule $M(X)=\big(\bigoplus_{x\in X}M\cdot x\big)/_{M\cdot\ast}$ over $(A,w,\epsilon)$ with involution $h(X)$ induced diagonally by $h\colon M\rightarrow M$. This gives a new antistructure $\big(A\ltimes M(X),w\ltimes h(X),(\epsilon,0)\big)$, and by letting $X$ vary a functor
\[\widetilde{\KR}\big(A\ltimes M(-)\big):=\hofib\Big(\KR\big(A\ltimes M(-),w\ltimes h(-),(\epsilon,0)\big)\longrightarrow \KR(A,w,\epsilon)\Big)\]
from pointed finite sets to $\Sp_{\mathbb{Z}/2}^{\Sigma}$. Extending this levelwise to pointed finite simplicial $\mathbb{Z}/2$-sets as in Example \ref{extendfctrs} we obtain a functor $\widetilde{\KR}\big(A\ltimes M(-)\big)\colon\mathbb{Z}/2\mbox{-}\mathcal{S}_\ast\rightarrow
\Sp^{\Sigma}_{\mathbb{Z}/2}$.
The aim of this section is to prove the following theorems. 

\begin{thmA}\label{KRanal}
For every pointed finite $\mathbb{Z}/2$-set $X$, the equivariant derivative $D_\ast\widetilde{\KR}\big(A\ltimes M(X)\big)$ is equivalent to the Real MacLane homology of $A$ with coefficients in the Dold-Thom construction $M(X\wedge S^{1,1})$, as defined in \ref{defMacLane}.
\end{thmA}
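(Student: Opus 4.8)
The plan is to combine the general identification of differentials from equivariant calculus with a direct computation of Real MacLane homology, following the strategy of the classical identification of the derivative of $\widetilde{\K}\big(A\ltimes M(-)\big)$ with MacLane homology in \cite{DM} and \cite{McCarthy}. First I would note that $D_\ast\widetilde{\KR}\big(A\ltimes M(X)\big)$ is by definition $\hocolim_n \Omega^{n\rho}\widetilde{\KR}\big(A\ltimes M(X\wedge S^{n\rho})\big)$, where $S^{n\rho}=S^{n,n}\wedge S^n$ is the $n$-fold regular representation sphere for $\mathbb{Z}/2$. The key input is that $\widetilde{\KR}$ of a split square-zero extension of a Wall antistructure admits a description, after one loop, in terms of a Real analogue of the cyclic bar construction on the category $\mathcal{P}_A$ with coefficients twisted by $M$; concretely, Hesselholt--Madsen's $\KR$ is built from a Real $S_\bullet$-construction with a duality-compatible structure, and the relative term $\widetilde{\KR}\big(A\ltimes M(Y)\big)$ splits off a summand which, as $Y$ becomes highly connected, is approximated by a ``linear in $M(Y)$'' piece -- exactly the Real MacLane homology complex $\HR(A;M(Y))$ of Definition \ref{defMacLane}.

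The main steps, in order, are: (1) establish a natural map $\HR\big(A;M(X\wedge S^{n\rho})\big)\to \widetilde{\KR}\big(A\ltimes M(X\wedge S^{n\rho})\big)$ (or its adjoint/looped version), coming from the inclusion of the first Real-Hochschild-degree stratum into the full Real $\K$-theory spectrum, mimicking \cite[3.2]{DM}; (2) use the stable $\mathbb{Z}/2$-excision estimates of Theorem $C$ -- i.e. the fact that $\widetilde{\KR}\big(A\ltimes M(-)\big)$ satisfies $E_1^{\mathbb{Z}/2}(c,\kappa)$ and $W(v,\kappa)$ -- to control the connectivity of this map on $X\wedge S^{n\rho}$, so that it becomes $(n\rho)$-approximately an equivalence in the range that survives to the colimit; (3) pass to the homotopy colimit $\hocolim_n\Omega^{n\rho}(-)$ on both sides. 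On the $\HR$ side, since $\HR(A;-)$ is $\mathbb{Z}/2$-linear in its bimodule argument and $M(-)$ is $\mathbb{Z}/2$-linear (Example \ref{DoldThom}, Appendix \ref{confGlin}), the composite $\HR\big(A;M(-)\big)$ is $\mathbb{Z}/2$-linear, so by Remark \ref{rightestimate}(2) (invertibility of representation spheres) and Corollary \ref{corlinearsmashspectrum} the colimit $\hocolim_n\Omega^{n\rho}\HR\big(A;M(X\wedge S^{n\rho})\big)$ is just $\HR\big(A;M(S^{1,1})\big)\wedge |X|$ -- this is precisely the form of the answer in Theorem $A$ of the introduction; (4) conclude that $D_\ast\widetilde{\KR}\big(A\ltimes M(X)\big)\simeq \HR\big(A;M(S^{1,1})\big)\wedge|X|$, which is the asserted equivalence once one identifies the bare statement here with the introductory Theorem $A$.

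The main obstacle I expect is step (1)--(2): producing the comparison map with the \emph{duality} structure correct, and checking that the non-Hochschild-degree-one strata of the Real $S_\bullet$-construction contribute only in connectivities that diverge fast enough under $\Omega^{n\rho}$ for all subgroups $H\leq\mathbb{Z}/2$ simultaneously -- the fixed-point stratum is the delicate one, since the $\mathbb{Z}/2$-fixed points of $S^{n\rho}$ are only $S^n$, so one gains only $n$ (not $2n$) loops there and must verify the higher strata vanish accordingly. This is where Theorem $C$'s explicit $\rho$ with $\rho(1)=-1$, $\rho(\mathbb{Z}/2)=0$ is used: it is exactly tuned so that the relevant connectivity estimates of Proposition \ref{connlinapprox} and the argument in the proof of Theorem $B$ go through. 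A secondary, more bookkeeping-level obstacle is matching Hesselholt--Madsen's model of $\KR$ in \cite{IbLars} with the MacLane-homology model of Definition \ref{defMacLane}; this is the Real analogue of the comparison between the $S_\bullet$-construction and the category-with-bimodule model of $\THH$ used in \cite[\S3]{DM}, and I would isolate it as a separate lemma rather than reprove it inline.
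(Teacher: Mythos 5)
Your approach is essentially the paper's: the proof of Theorem A is precisely the combination of (i) an identification of $\widetilde{\KR}\big(A\ltimes M(Y)\big)$ with a coefficients-system model $\widetilde{\KR}\big(A;M(S^{1,1})(Y)\big)$ (Theorem \ref{relKRandKR}), and (ii) the statement that the inclusion of wedges into direct sums $\widetilde{\KR}\big(S;N(Y)\big)\to\HR\big(S;N(Y)\big)$ becomes a $\pi_\ast$-equivalence after $\hocolim_n\Omega^{n\rho}(-)$, with the colimit then evaluated by $\mathbb{Z}/2$-linearity of $\HR$ (Proposition \ref{Drel}). Two corrections. First, your step (3) is internally inconsistent about where the Real circle enters: by linearity, $\hocolim_n\Omega^{n\rho}\HR\big(A;M(X\wedge S^{n\rho})\big)$ is $\HR(A;M)\wedge|X|$, \emph{not} $\HR\big(A;M(S^{1,1})\big)\wedge|X|$. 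The $S^{1,1}$ is produced by the comparison (i), whose output is $M(S^{1,1}\wedge Y)$ rather than $M(Y)$ (it arises from the nerve direction in the swallowing argument, exactly as the $S^1$ does in \cite{DM}); correspondingly the comparison map in your step (1) must have $\HR\big(A;M(S^{1,1}\wedge Y)\big)$ on the MacLane side, and the map the paper uses goes from $K$-theory to MacLane homology (wedges into sums), not the reverse. Second, the step you defer as ``bookkeeping-level'' is in fact the technical bulk of the argument: Theorem \ref{relKRandKR} rests on the classification of split square-zero extensions of exact categories with duality, essential surjectivity of $S^{2,1}_{\sbt}$ applied to the section (Proposition \ref{splitPA}), and an explicitly duality-compatible swallowing homotopy (Proposition \ref{swallow}). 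By contrast, the connectivity control you locate in steps (1)--(2) does not come from Theorem C's excision estimates but from the elementary wedge-versus-product estimate of Lemma \ref{connwedgesitoprodtop}, which gives connectivity $\big(2\conn X+1,\min\{2\conn X^{\mathbb{Z}/2},\conn X\}+1\big)$ for the trace-like map; your worry about the fixed-point stratum gaining only $n$ rather than $2n$ loops is exactly the point checked at the end of the proof of Proposition \ref{Drel}, and it works out as you predict.
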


Here $S^{1,1}=\Delta[1]/\partial$ is the simplicial circle with levelwise involutions $\big(0\leq i_0\leq\dots\leq i_p\leq 1\big)\mapsto \big(0\leq 1-i_p\leq\dots\leq 1-i_0\leq 1\big)$
The simplicial set $M(S^{1,1})$ is isomorphic to the nerve of $M$, and the levelwise involution sends $(m_1,\dots,m_p)$ to $(h(m_p),\dots,h(m_1))$. We remark that this involution is not simplicial.

\begin{thmC}
The functor $\widetilde{\KR}\big(A\ltimes M(-)\big)\colon\mathbb{Z}/2\mbox{-}\mathcal{S}_\ast\rightarrow\Sp^{\Sigma}_{\mathbb{Z}/2}$ is a $\mathbb{Z}/2$-$\rho$-analytic (see Definition \ref{Ganalytic}) reduced enriched homotopy functor, where $\rho$ is the function
\[\rho(H)=\left\{\begin{array}{cc}-1&\mbox{ for }H=\{1\}\\
0&\mbox{ for }H=\mathbb{Z}/2
\end{array}\right.\]
Moreover it is relatively additive and it preserves connectivity, in the sense of Definitions \ref{reladd} and \ref{presconn} respectively.
\end{thmC}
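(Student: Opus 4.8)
The plan is to reduce every assertion to connectivity estimates for the model of $\KR$ recalled in Section \ref{KRsec}, namely the Real $S_\bullet$-construction applied to the split square-zero extension $A\ltimes M(X)$. Two of the claims are immediate. The functor is reduced because $M(\ast)=0$, so $\widetilde{\KR}\big(A\ltimes M(\ast)\big)=\hofib(\id)$ is $\mathbb{Z}/2$-contractible; and it is an enriched homotopy functor in the sense of Definition \ref{htpyfctr} because it is built by the levelwise procedure of Example \ref{extendfctrs} out of a functor on finite pointed sets, and because, as recorded in Example \ref{DoldThom}, the equivariant Dold--Thom construction $M(-)$ sends weak equivalences of simplicial $H$-sets to weak equivalences for every subgroup $H\leq\mathbb{Z}/2$, which together with the homotopy invariance of the Real $S_\bullet$-construction yields the required invariance after restriction to $H$.

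For the quantitative statements — condition $W(v,\rho+1)$, the conditions $E^{\mathbb{Z}/2}_n(\rho-\tfrac{q}{n+1},\rho+1)$ of Definition \ref{Ganalytic}, relative additivity (Definition \ref{reladd}), and preservation of connectivity (Definition \ref{presconn}) — one works degreewise in the simplicial directions defining $\KR$, so that after delooping it suffices to control, for each fixed multidegree, the functor obtained by evaluating the reduced Real $S_\bullet$-construction; in that fixed degree the functor is assembled from nerves of categories of finitely generated projective $A$-modules glued along iterated relative Dold--Thom constructions on $M(X)$, with a $\mathbb{Z}/2$-action mixing the duality on $\mathcal{P}_A$ with the involution $h$ on $M$. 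Forgetting the $\mathbb{Z}/2$-action, this functor coincides with McCarthy's relative algebraic $K$-theory functor $\widetilde{\K}\big(A\ltimes M(-)\big)$ of \cite{McCarthy}, so the $H=\{1\}$ instances of all four conditions — with the constant $\rho(\{1\})=-1$ in the normalization of Definition \ref{Ganalytic} — are exactly the analyticity, stable excision, additivity and connectivity estimates proved there, building on \cite{calcII} and \cite{calcIII}.

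The genuinely new input is the analysis of the $\mathbb{Z}/2$-fixed points, governed by $\rho(\mathbb{Z}/2)=0$. Here one needs a Real (equivalently hermitian, Grothendieck--Witt-theoretic) version of McCarthy's stability estimates for the Real $S_\bullet$-construction of a split square-zero extension: first, a connectivity bound showing that $\big(\widetilde{\KR}\big(A\ltimes M(X)\big)\big)^{\mathbb{Z}/2}$ is $(c+\conn M(X)^{\mathbb{Z}/2})$-connected for a constant $c$ independent of $X$; and second, a Real additivity theorem for $\KR$ on fixed points. Given these, one runs Goodwillie's cube argument (Section 1 of \cite{calcII}) on the $\mathbb{Z}/2$-fixed points of the Real $S_\bullet$-construction: feeding the connectivities of the $\mathbb{Z}/2$-fixed maps of a strongly homotopy cocartesian cube into the relative Blakers--Massey estimates yields $E^{\mathbb{Z}/2}_n$ with the asserted constant, and $W(v,\rho+1)$ together with relative additivity then follow either formally — as in Remark \ref{remstablin}, using that the relative Dold--Thom construction carries indexed wedges to indexed products — or from the same additivity input. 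The two constants differ ($\rho(\{1\})=-1$ but $\rho(\mathbb{Z}/2)=0$) for a combinatorial reason: the duality on the Real $S_\bullet$-construction identifies its two simplicial directions, so the $\mathbb{Z}/2$-fixed points see only the diagonal and lose one simplicial degree of connectivity; this is precisely the phenomenon reflected in the identification of $M(S^{1,1})^{\mathbb{Z}/2}$ with the nerve of $M$ equipped with the twisted involution $(m_1,\dots,m_p)\mapsto(h(m_p),\dots,h(m_1))$, which, as remarked after Theorem A, is not simplicial.

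Finally, preservation of connectivity (Definition \ref{presconn}) combines the $H=\{1\}$ bound of \cite{McCarthy} with the fixed-point bound just described, together with the Bredon-homology connectivity estimate $\conn M(X)^{\mathbb{Z}/2}\geq\min\{\conn X^{\{1\}},\conn X^{\mathbb{Z}/2}\}$ for the equivariant Dold--Thom construction. The main obstacle is the Real additivity theorem and the resulting cube-cartesianness estimate on the genuine $\mathbb{Z}/2$-fixed points with the precise constant $\rho(\mathbb{Z}/2)=0$: this is not a formal consequence of the non-equivariant statements, but requires setting up a hermitian $S_\bullet$-construction (or analysing the genuine fixed points of the Real one through their homotopy-fixed-point and geometric-fixed-point pieces), proving additivity there, and tracking connectivity through the non-simplicial duality involution, where the bookkeeping is delicate.
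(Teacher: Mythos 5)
There is a genuine gap: the $\mathbb{Z}/2$-fixed-point estimates, which you correctly identify as the new content, are not actually proved — you state that they "require setting up a hermitian $S_\bullet$-construction, proving additivity there, and tracking connectivity through the non-simplicial duality involution," and you flag this as the main obstacle rather than resolving it. Moreover, the route you propose (a Real additivity theorem for $\KR$ itself on genuine fixed points) is both harder than necessary and not how the argument goes. The paper's proof first invokes Theorem \ref{relKRandKR}, which identifies $\widetilde{\KR}\big(A\ltimes M(X)\big)$ with the Real $K$-theory with coefficients $\widetilde{\KR}\big(A;M(S^{1,1}\wedge X)\big)$; this is the decisive step, because the coefficient model is literally a wedge, over the simplices of the Real $S^{2,1}_{\sbt}$-construction of $A$ alone, of equivariant Dold--Thom constructions $N_s(-)$. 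After that reduction no additivity theorem for $\KR$ is needed: Theorem \ref{KRanalytic} establishes $E^{\mathbb{Z}/2}_n(0,-1)$, relative additivity and preservation of connectivity for $\widetilde{\KR}(S;N(-))$ by passing to fixed points via edgewise subdivision, where $\widetilde{\KR}(S;N(\chi))_n^{\mathbb{Z}/2}$ decomposes as a wedge of the fixed points $N_s(\chi)^{\mathbb{Z}/2}$, and then applying the ordinary (non-equivariant) dual Blakers--Massey theorem to these spaces together with the $G$-linearity of the Dold--Thom construction from Appendix \ref{confGlin} and the $1$-reducedness of the $S$-construction (which supplies the extra $(2n,n)$ of connectivity needed to survive $\Omega^{n\rho}$).

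Your diagnosis of where the constants $\rho(1)=-1$ versus $\rho(\mathbb{Z}/2)=0$ come from is essentially right in spirit, but the mechanism is cleaner than you suggest: the coefficient functor $\widetilde{\KR}(S;N(-))$ is $\mathbb{Z}/2$-$\rho_0$-analytic with $\rho_0=(0,0)$, and precomposition with $S^{1,1}\wedge(-)$ shifts the underlying connectivity by $1$ (since $\conn(e\wedge S^{1,1})=\conn e+1$) while leaving the fixed-point connectivity unchanged (since $(S^{1,1})^{\mathbb{Z}/2}=S^0$), which is exactly the passage from $(0,0)$ to $(-1,0)$. Without Theorem \ref{relKRandKR} and the wedge/edgewise-subdivision analysis of the coefficient model, the fixed-point half of every quantitative claim in Theorem C remains unestablished in your write-up.
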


The section is organized as follows. Section \ref{KRsec} is a recollection of constructions from \cite{IbLars}, containing in particular the definition of the Real algebraic $K$-theory spectrum of $(A,w,\epsilon)$. In Section \ref{MacLane} we define Real algebraic $K$-theory with coefficients and Real MacLane homology. In Section \ref{relKRanal} we prove that Real $K$-theory with coefficients defines a $\mathbb{Z}/2$-analytic functor (\ref{KRanalytic}), whose derivative is Real MacLane homology (\ref{Drel}). Finally in Section \ref{splitting} we finish the proofs of Theorems $A$ and $C$, by showing that $\widetilde{\KR}(A\ltimes M)$ is equivalent to the Real $K$-theory of $A$ with coefficients in $M(S^{1,1})$.

\subsection{Hesselholt and Madsen's Real algebraic $K$-theory functor}\label{KRsec}

This section is a recollection of constructions from \cite{IbLars}.
Let $\Delta R$ be the smallest subcategory of finite sets containing $\Delta$ and the maps $\omega_p\colon [p]\rightarrow [p]$ defined by
\[\omega_p(i)=p-i\]
for every $p\geq 0$.
Here we denoted $[p]=\{0,\dots,p\}$.
A pointed Real $n$-simplicial set is a functor $(\Delta R^{op})^{\times n}\rightarrow Set_\ast$. This is the same as a pointed $n$-simplicial set with levelwise involutions $w_p$ that ``reverse the order of the structure maps''. If $Z$ is a Real simplicial set, we let $|Z|$ be the geometric realization of the underlying simplicial set, defined as the quotient
\[|Z|=\big(\coprod_{p\geq 0}Z_p\times\Delta^p\big)/_\sim\]
where $\sim$ is the standard equivalence relation (see e.g. \cite[I-\S 2]{GJ}). The space $|Z|$ inherits a $\mathbb{Z}/2$-action from the Real structure, defined by the involution $|Z|\rightarrow |Z|$
\[[z\in Z_p;(t_0,\dots,t_p)\in\Delta^p]\longmapsto[w_p(z);(t_p,\dots,t_0)]\]

\begin{rem}\label{sub}
The levelwise involution on a Real simplicial set $Z$ induces a simplicial involution on Segal's edgewise subdivision $sd_e Z$ of the underlying simplicial set (see \cite{Segal}). The realization $|sd_e Z|$ inherits an involution, and the canonical homeomorphism $|Z|\cong |sd_e Z|$ is $\mathbb{Z}/2$-equivariant.
\end{rem}
The realization of a Real $n$-simplicial set $Z\colon (\Delta R^{op})^{\times n}\rightarrow Set_\ast$ is defined as the realization of the diagonal Real simplicial set
\[\delta(Z)\colon  \Delta R^{op}\stackrel{\delta}{\longrightarrow}(\Delta R^{op})^{\times n}\stackrel{Z}{\longrightarrow} Set_\ast\]
with the induced $\mathbb{Z}/2$-action.

\begin{ex}
Let $D\colon C^{op}\to C$ be a functor which satisfies $D^2=\id_C$. The nerve of $C$ equipped with the levelwise involutions $N_pD\colon N_pC\to N_pC$ is a Real simplicial set.
\end{ex}

The natural input of the Real $K$-theory functor is an exact category with duality, as defined by Schlichting in \cite{Sch}.

\begin{defn}[\cite{Sch}]\label{exactwithduality}
Let $C$ be an exact category. A duality on $C$ is an exact functor $D\colon C^{op}\rightarrow C$, together with a natural isomorphism $\eta\colon\id\Rightarrow D^2$ which satisfies $D(\eta_c)\circ\eta_{Dc}=\id_c$ for every object $c$ of $C$. If $\eta$ is the identity natural transformation we say that the duality is strict.
\end{defn}

We are particularly interested in the dualities on the category of finitely generated projective modules over a ring.

\begin{ex} Let $(A,w,\epsilon)$ be a Wall antistructure. We let $A_s$ be the right $A$-module with the same underlying Abelian group as $A$, and with right module structure $b\cdot a:=w(a)\epsilon\cdot b$. Given a right $A$-module $P$,  the Abelian group of module maps $\hom_A(P,A_s)$ has a right module structure defined by $(\lambda\cdot a)(p)=\lambda(p)\cdot a$. This defines a duality on the category $\mathcal{P}_{A}$ of finitely generated projective right $A$-modules
\[D=\hom_A(-,A_s)\colon \mathcal{P}_{A}^{op}\longrightarrow \mathcal{P}_{A}\]
The natural transformation $\eta_P\colon P\to D^2P$ is defined by the formula $(\eta_P(p))\big(\lambda\colon P\to A_s\big)=w(\lambda(p))\cdot \epsilon$.
\end{ex}

Let $C$ be an exact category with duality, and let us assume for the moment that the duality is strict. In \cite{IbLars} the authors define a simplicial category  $S^{2,1}_{\sbt}C$ similar to a two-fold Waldhausen $S_{\sbt}$-construction, in the following way. Let $Cat([2],[p])$ be the category of functors of posets $[2]\rightarrow [p]$. In each simplicial degree $p$, the category $S^{2,1}_{p}C$ is the full subcategory of the category of functors $X\colon Cat([2],[p])\rightarrow C$ which satisfy
\begin{itemize}
\item $X(\sigma)= 0$ if $\sigma\colon[2]\rightarrow [p]$ is not injective,
\item for every $\psi\colon[3]\rightarrow [p]$ the sequence
\[0\longrightarrow X(d_3\psi)\longrightarrow X(d_2\psi)\longrightarrow X(d_1\psi)\longrightarrow X(d_0\psi)\longrightarrow 0\]
is exact.
\end{itemize}
The category $S^{2,1}_{p}C$ is canonically an exact category (pointwise), and conjugation with the canonical strict duality on $Cat([2],[p])$  induces a strict duality $D\colon (S^{2,1}_pC)^{op}\rightarrow S^{2,1}_pC$. The construction can therefore be iterated to define a Real $n$-simplicial set $S^{(n)}(C)$ with $\underline{p}=(p_1,\dots,p_n)$-simplices
\[S^{(n)}(C)_{\underline{p}}:=\Ob (S^{2,1}_{\sbt})^{(n)}_{\underline{p}}C =\Ob S^{2,1}_{p_1}S^{2,1}_{p_2}\dots S^{2,1}_{p_n}C\]
Its geometric realization is the pointed $\mathbb{Z}/2$-space $|S^{(n)}(C)|=:\KR(C)_n$.
There are natural transformations $\chi_\ast\colon S^{(n)}(C)\rightarrow S^{(n)}(C)\circ\chi$ associated to a permutation $\chi$ in $\Sigma_n$ defined by permuting the entries of the functors (cf. \cite{IbLars}),  and a canonical isomorphism $S^{2,1}_2C=C$. These induce respectively a $\Sigma_n$-action on $\KR(C)_n$ and structure maps $\KR(C)_n\wedge S^{m\rho}\rightarrow \KR(C)_{n+m}$, where $\rho$ is the regular representation of $\mathbb{Z}/2$. The definitions of the $\Sigma_n$-actions and of the structure maps are analogous to the definitions for the $\mathbb{Z}/2$-spectrum $\widetilde{\KR}(S;N)$ defined in details in \S\ref{MacLane}. This structure combines into a symmetric $\mathbb{Z}/2$-spectrum $\KR(C)$.

\begin{defn}[\cite{IbLars}] The symmetric $\mathbb{Z}/2$-spectrum $\KR(C)$ is called the Real $K$-theory of the exact category with strict duality $C$.
\end{defn}

In case the duality on $C$ is not strict, there is a formal construction that replaces $C$ with an equivalent category $\mathcal{D}C$ which has a strict duality. The definition of $\mathcal{D}C$ first appeared in Vogell's thesis \cite{Vogell}, and it was later generalized in by Weiss and Williams in \cite{WW}. The objects of $\mathcal{D}C$ are the triples
\[\Ob\mathcal{D}C=\big\{\varphi=(c,d,\phi)\ |\ c,d\in \Ob C,\  \phi\colon d\stackrel{\cong}{\longrightarrow}Dc\big\}\]
A morphism $(c,d,\phi)\rightarrow (c',d',\phi')$ is a pair of morphisms $(a\colon c\to c',b\colon d'\to d)$ of $C$ which satisfy $\phi\circ b=D(a)\circ \phi'$.
 The functor $(\mathcal{D}C)^{op}\rightarrow \mathcal{D}C$ that sends $(c,d,\phi)$ to $(d,c,D(\phi)\circ\eta_c)$, and $(a,b)$ to $(b,a)$, is a strict duality on $\mathcal{D}C$. The projection functor $\mathcal{D}C\rightarrow C$ that projects onto the first component on both objects and morphisms is an equivalence of categories. 
If $C$ is an exact category, the category $\mathcal{D}C$ inherits an exact structure through the equivalence $\mathcal{D}C\rightarrow C$. In particular $\mathcal{D}\mathcal{P}_A$ is an exact category with strict duality, equivalent to $\mathcal{P}_A$.
\begin{defn}[\cite{IbLars}] The Real $K$-theory of a Wall antistructure $(A,\epsilon,w)$ is the $\mathbb{Z}/2$-symmetric spectrum
\[\KR(A,w,\epsilon)=\KR(\mathcal{D}\mathcal{P}_A)\]
\end{defn}
For completeness, we mention how $\KR(\mathcal{DC})$ relates to other constructions present in the literature. We will not use any of these result in the present paper, and we refer the proofs to \cite{IbLars}.
The underlying spectrum of $\KR(\mathcal{D}C)$ is equivalent to Waldhausen's algebraic $K$-theory $\K(C)$. The equivalence is induced by a simplicial functor $S^{2,1}_{\sbt}\mathcal{D}C\to S_{\sbt}S_{\sbt}\mathcal{D}C$ to the diagonal of the two-fold $S_{\sbt}$-construction. This gives an equivalence between the underlying spectrum of $\KR(\mathcal{D}C)$ and $\K(\mathcal{D}C)$, and the latter is equivalent to $\K(C)$ by the equivalence of categories $\mathcal{D}C\to C$.
Moreover the homotopy groups of the fixed points spectrum $\KR(\mathcal{D}C)^{\mathbb{Z}/2}$ are isomorphic to the Hermitian $K$-theory groups (or Grothendieck-Witt groups) of the category with duality $C$, as defined in \cite{Sch} and \cite{Horn}. This is because the Grothendieck-Witt space $\GW(C)$ of \cite{Sch} is weakly equivalent to the space of pointed equivariant maps from the representation sphere $S^{\rho}$ to $|\Ob S^{2,1}_{\sbt}\mathcal{D}C|$. A key ingredient in establishing this equivalence is the homotopy equivalence between the fixed points $|i\mathcal{D}C|^{\mathbb{Z}/2}$ of the subcategory of isomorphisms of $\mathcal{D}C$ and the realization of the category of symmetric spaces $|i\sym C|$ from \cite{Sch}.

\subsection{Real algebraic $K$-theory with coefficients and Real MacLane homology}\label{MacLane}

In this section we define and compare the Real $K$-theory and the Real MacLane homology of a coefficients system. These constructions will provide a rich supply of $\mathbb{Z}/2$-analytic functors. Their comparison is analogous to the relationship between $\widetilde{\K}(A;M)$ and $\THH(A;M)$ of \cite{DM}.

We say that a Real $n$-simplicial set $Z\colon (\Delta R^{op})^{\times n}\rightarrow Set$ is $1$-reduced if $Z_{\underline{p}}=\ast$ whenever at least one of the components of $\underline{p}=(p_1,\dots,p_n)$ satisfies $p_i\leq 1$.

\begin{defn} A Real $S$-construction is a collection of $1$-reduced pointed Real $n$-simplicial sets $ S^{(n)}\colon(\Delta R^{op})^{\times n}\rightarrow Set_\ast$, one for every integer $n\geq 0$, together with the following structure:
\begin{itemize}[leftmargin=0cm]
\item An isomorphism of Real $n$-simplicial sets $\chi_\ast\colon S^{(n)}\rightarrow S^{(n)}\circ\chi$ for every permutation $\chi$ in $\Sigma^n$, where $\chi\colon (\Delta R)^{\times n}\rightarrow (\Delta R)^{\times n}$ denotes the automorphism that permutes the product factors. For every $\chi$ and $\xi$ in $\Sigma_n$ we require that the diagram
\[\xymatrix{S^{(n)}\ar[r]^{\chi_\ast}\ar[dr]_{(\xi\circ\chi)_\ast}&S^{(n)}\circ\chi\ar[d]^{\xi_\ast|_{\chi}}\\
&S^{(n)}\circ\xi\circ\chi
}\]
commutes. Here $\xi_\ast|_{\chi}$ is the natural transformation $\xi_\ast$ restricted along the functor $\chi$.
\item Maps of Real $n$-simplicial sets 
\[\kappa\colon S^{(n)}\longrightarrow S^{(n+1)}\circ\iota\]
where $\iota\colon (\Delta R^{op})^{\times n}\rightarrow (\Delta R^{op})^{\times (n+1)}$ is the inclusion $\iota(p_1,\dots, p_n)=(p_1,\dots,p_n,2)$. We require that for every $(\chi,\xi)$ in $\Sigma_n\times\Sigma_m$ the diagram
\[\xymatrix{S^{(n)}\ar[d]_{\chi_\ast}\ar[r]^-{\kappa^{m}}&S^{(n+m)}\circ\iota^{m}\ar[d]^{(\chi\times\xi)_\ast|_{\iota^m}}\\
S^{(n)}\circ\chi\ar[r]_-{\kappa^m|_\chi}&S^{(n+m)}\circ\iota^{m}\circ\chi
}\]
commutes. Notice that $\iota^{m}\circ\chi=(\chi\times\xi)\circ\iota^{m}$ for any $\xi$ in $\Sigma_m$.
\end{itemize}
\end{defn}

The main example of a Real $S$-construction is defined from Hesselholt and Madsen's $S^{2,1}_{\sbt}$-construction. The extra generality of the previous definition will simplify the notation in later constructions.

\begin{ex}\label{exSconstr}
If $C$ is an exact category with strict duality, the collection of Real $n$-simplicial sets $S^{(n)}(C)=\Ob(S^{2,1}_{\sbt})^{(n)}C$ of Section \ref{KRsec} defines a Real $S$-construction $S(C)$. In case the duality on $C$ is not strict, we replace $C$ with the equivalent category with strict duality $\mathcal{D}C$ of Section \ref{KRsec}.
\end{ex}

We recall that the simplex category of an $n$-simplicial set $Z$ is the category $\Simp(Z)$ with objects
\[\Ob\Simp(Z)=\coprod_{\underline{p}\in\mathbb{N}^{n}}Z_{\underline{p}}\]
A morphism $(z,\underline{p})\rightarrow (y,\underline{q})$ is a morphism $\sigma\colon \underline{p}\rightarrow \underline{q}$ in $\Delta^{\times n}$ such that $\sigma^{\ast}y=z$. If $Z$ is a Real $n$-simplicial set, the simplex category $\Simp(Z)$ of the underlying $n$-simplicial set inherits an involution $\Simp(w)$ that sends $(z,\underline{p})$ to $(w(z),\underline{p})$ and $\sigma\colon (z,\underline{p})\rightarrow (y,\underline{q})$ to 
\[\overline{\sigma}\colon \underline{p}\stackrel{\omega}{\longrightarrow} \underline{p}\stackrel{\sigma}{\longrightarrow} \underline{q}\stackrel{\omega}{\longrightarrow} \underline{q}\]
where $\omega$ is the involution on $\underline{r}=[r_1]\times\dots\times [r_n]$ defined as the product of the involutions $\omega(i)=r_j-i$.

If $S$ is a Real $S$-construction, a permutation $\chi$ in $\Sigma_n$ induces an automorphism $\phi_\chi$ of $\Simp(S^{(n)})$
\[\phi_\chi\colon \Simp(S^{(n)})\stackrel{\Simp(\chi_\ast)}{\xrightarrow{\hspace*{1.2cm}}}\Simp(S^{(n)}\circ\chi)\cong \Simp(S^{(n)})\]
where $\Simp(\chi_\ast)$ is the functor induced by functoriality of $\Simp$ on the natural transformation $\chi_\ast\colon S^{(n)}\Rightarrow S^{(n)}\circ\chi$, and the second map is the canonical isomorphism $\Simp(S^{(n)}\circ\chi)\cong \Simp(S^{(n)})$ that reindexes the disjoint union summands.

\begin{defn}
A coefficients system for a Real $S$-construction is a family of Abelian group valued functors $N\colon \Simp(S^{(n)})^{op}\rightarrow Ab$ for every $n\geq 0$, sending the basepoint $\ast\in S^{(n)}_{\underline{p}}$ to the trivial Abelian group, together with:
\begin{itemize}[leftmargin=0cm]
\item  Natural transformations $w\colon N\rightarrow N\circ \Simp(w)^{op}$, where $\Simp(w)$ is the involution on $\Simp(S^{(n)})$ above. These have to satisfy $w^2=\id\colon N\rightarrow N\circ (\Simp(w)^{op})^2=N$.
\item  Natural transformations $\chi_\ast\colon N\rightarrow N\circ \phi_{\chi}^{op}$, such that for every $\chi$ and $\xi$ in $\Sigma_n$ the diagram
\[
\xymatrix{N\ar[r]^{\chi_\ast}\ar[d]_{(\xi\circ\chi)_\ast}&N\circ \phi_{\chi}^{op}\ar[d]^{\xi_\ast|_{\phi_{\chi}^{op}}}\\
N\circ \phi_{\xi\circ\chi}^{op}\ar@{=}[r]&N\circ \phi_{\xi}^{op}\circ\phi_{\chi}^{op}
}
\]
commutes.
\item As compatibility between this structure and $\kappa\colon S^{(n)}\rightarrow S^{(n+1)}\circ\iota$ we require that the diagrams
\[\xymatrix{\Simp(S^{(n)})^{op}\ar[r]^-N\ar[d]_{\Simp(\kappa)^{op}}& Ab\\
\Simp(S^{(n)}\circ\iota)^{op}\ar@{^(->}[r]&\Simp(S^{(n+1)})^{op}\ar[u]_N
}
\ \ \ \ \ \ \ \ 
\xymatrix{N\circ (\Simp(\kappa)^{op})^m\ar[d]_{\chi_\ast}\ar[r]^-{(\chi\times\xi)_\ast}&N\circ \phi_{\chi\times\xi}\circ \Simp((\kappa)^{op})^m\ar@{=}[d]\\
N\circ (\Simp(\kappa)^{op})^m\circ\phi_\chi\ar@{=}[r]&
\Simp((\chi\times\xi)\circ \kappa^m)^{op}
}
\]
commute for all  $(\chi,\xi)$ in $\Sigma_n\times\Sigma_m$, where the bottom horizontal map of the left diagram is the canonical inclusion.
\end{itemize}
\end{defn}

Let us see how a bimodule over an exact category with duality $C$ induces a coefficient system for the Real $S$-construction $S(C)$ of Example \ref{exSconstr}.

\begin{defn}\label{defbimoddual} A bimodule with duality on an exact category with duality $(C,D,\eta)$ (see Definition \ref{exactwithduality}) is an additive functor $M\colon C^{op}\otimes C\rightarrow Ab$ together with a natural isomorphism $J\colon M\Rightarrow M\circ D_\gamma$ making the diagram
\[\xymatrix{M(c,d)\ar[dr]_{(\eta_{c}^{-1}\otimes\eta_d)_\ast}\ar[r]^{J}&M(Dd,Dc)\ar[d]^{J}\\
&M(D^2c,D^2d)
}\]
commutative for every pair of objects $c$ and $d$ of $C$. If the duality on $C$ is strict, the natural transformation $J$ automatically satisfies $J^2=\id$.
\end{defn}

There is a strictification construction for the duality on $M$ as well.
A bimodule $M$ with duality $J$ on an exact category with duality $C$ induces a bimodule $\mathcal{D}M\colon (\mathcal{D}C)^{op}\otimes \mathcal{D}C\rightarrow Ab$ with duality $\mathcal{D}J$ on the category with strict duality $\mathcal{D}C$ of \S\ref{KRsec}. The bimodule is defined at a pair of objects $\varphi=(c,d,\phi)$ and $\varphi'=(c',d',\phi')$ of $\mathcal{D}C$ by the Abelian group $\mathcal{D}M(\varphi,\varphi')=M(c,c')$, and the duality is the natural transformation
\[\mathcal{D}J_{(\varphi,\varphi')}\colon\mathcal{D}M(\varphi,\varphi')=M(c,c')\stackrel{J_{c,c'}}{\xrightarrow{\hspace*{.7cm}}}M(Dc',Dc)\stackrel{(\varphi'\otimes\varphi^{-1})_\ast}{\xrightarrow{\hspace*{1.2cm}}}M(d',d)=\mathcal{D}M(D\varphi',D\varphi)\]

\begin{ex}\label{coeffsystbimod} Let $M\colon C^{op}\otimes C\rightarrow Ab$ be a bimodule with duality over an exact category with duality $C$. 
Upon making the suitable strictifications we can assume that the duality on $C$ is strict. We define a coefficients system $N_M$ for the Real $S$-construction $S^{(n)}(C):=\Ob (S^{2,1}_{\sbt})^{(n)}C$ of Example \ref{exSconstr}.
The bimodule $M$ extends canonically to a simplicial bimodule \[M_{\sbt}\colon (S_{\sbt}^{2,1}C)^{op}\otimes  S_{\sbt}^{2,1}C\longrightarrow Ab\]
on the simplicial category $S_{\sbt}^{2,1}C$, with a duality $J_{\sbt}\colon M_{\sbt}\Rightarrow M_{\sbt}\circ D_\gamma$. The Abelian group $M_{p}(X,Y)$ for diagrams $X$ and $Y$ in $ S_{\sbt}^{2,1}C$ is defined as the subgroup
\[M_{p}(X,Y)\leq\bigoplus_{\theta\in Cat([2], [p])}M(X_{\theta},Y_{\theta})\]
of collections $\{m_\theta\}$ with the property that for every natural transformation  $\psi\colon \rho\to\theta$ the relation
\[X(\psi)^{\ast}(m_\theta)=Y(\psi)_\ast (m_\rho)\]
holds in $M(X_\rho,Y_\theta)$.
The duality $J_{k}\colon M_k\Rightarrow M_k\circ D_{\gamma}$ is the restriction of $\bigoplus_{\theta}J$. Iterations of this construction give an $n$-simplicial bimodule
\[M_{\sbt}^{(n)}\colon ((S_{\sbt}^{2,1})^{(n)}C)^{op}\otimes  (S_{\sbt}^{2,1})^{(n)}C\longrightarrow Ab\]
for every $n\geq 0$.
This is analogous to the extension for the $S_{\sbt}$-construction of \cite[I-\S 3.3]{DGM}.

The functor $N_M\colon \Simp(S^{(n)}(C))^{op}\rightarrow Ab$ of the coefficients system is defined on objects by $N_M(s,\underline{p})=M_{\underline{p}}^{(n)}(s,s)$, and by sending a morphism $\sigma\colon \underline{q}\to\underline{p}$ from $\sigma^\ast s$ to $s$ in $\Simp(S^{(n)}(C))$ to the natural transformation of the simplicial bimodule structure
\[N_M(s,\underline{p})=M_{\underline{p}}^{(n)}(s, s)\stackrel{\sigma}{\longrightarrow}M_{\underline{q}}^{(n)}(\sigma^\ast s,\sigma^\ast s)=N_M(\sigma^\ast s,\underline{q})\]
The functor $N_M$ also inherits a natural transformation
\[w\colon N_M(s,\underline{p})=M_{\underline{p}}^{(n)}(s,s)\stackrel{J_{\underline{p}}}{\longrightarrow} M_{\underline{p}}^{(n)}(Ds,Ds)=N_M(Ds,\underline{p})\]
from the duality $J$ on $M$,
and natural transformations $\chi_\ast\colon N_M\rightarrow N_M\circ\phi_{\chi}^{op}$ defined by permuting the $S^{2,1}_{\sbt}$ factors in a similar way as we do for $S(C)$. This makes $N_M$ into a coefficients system for $S(C)$.
\end{ex}

We define a symmetric $\mathbb{Z}/2$-spectrum $\widetilde{\KR}(S;N)$ from a coefficients system $(S,N)$, as follows. There is a Real $n$-simplicial set $\widetilde{\KR}(S;N)^{(n)}$ defined in degree $\underline{p}$ by
\[\widetilde{\KR}(S;N)^{(n)}_{\underline{p}}=\bigvee_{s\in S^{(n)}_{\underline{p}}}N_s\]
The simplicial structure map associated to $\sigma\colon \underline{p}\rightarrow \underline{q}$ is the wedge of the maps $\sigma^\ast\colon N_s\rightarrow N_{\sigma^\ast s}$ given by functoriality of $N$. The Real structure is given by the wedge of the maps $w\colon N_s\rightarrow N_{ws}$. Define the $n$-th space of the spectrum $\widetilde{\KR}(S;N)$ as the geometric realization of the diagonal Real simplicial set $d(\widetilde{\KR}(S;N)^{(n)})$. There is a canonical $\mathbb{Z}/2$-equivariant homeomorphism
\[\widetilde{\KR}(S;N)_n=|d(\widetilde{\KR}(S;N)^{(n)})|\cong\big(\coprod_{\underline{p}\in\mathbb{N}^n}\widetilde{\KR}(S;N)^{(n)}_{\underline{p}}\times\Delta^{p_1}\times\dots\times\Delta^{p_n}\big)/_{\sim} \]
where the equivalence relation is the same as for the classical realization of an $n$-simplicial set, and the $\mathbb{Z}/2$-action on the right-hand side is diagonal, acting on $\Delta^p$ by reversing the order of the simplex coordinates.
The $\Sigma_n$-action on $\widetilde{\KR}(S;N)_n$ is defined at a permutation $\chi$ as the composite
\[\xymatrix{\widetilde{\KR}(S;N)_n\ar[dd]_{\chi}\ar[rr]^-{\cong}&& \big(\coprod_{\underline{p}\in\mathbb{N}^n}\widetilde{\KR}(S;N)^{(n)}_{\underline{p}}\times\Delta^{p_1}\times\dots\times\Delta^{p_n}\big)/_{\sim}
\ar[d]^{\coprod\chi_\ast\times\chi}\\
&&\big(\coprod_{\underline{p}\in\mathbb{N}^n}\widetilde{\KR}(S;N)^{(n)}_{\chi(\underline{p})}\times\Delta^{p_{\chi(1)}}\times\dots\times\Delta^{p_{\chi(n)}}\big)/_{\sim}\\
*[r]{\!\!\!\!\!\!\!\!\!\!\!\!\widetilde{\KR}(S;N)_n=|d(\widetilde{\KR}(S;N)^{(n)})|}\ar@{=}[rr]&&|d(\widetilde{\KR}(S;N)^{(n)}\circ\chi)|\ar[u]_{\cong}
}\]
where the map $\chi_\ast\colon \widetilde{\KR}(S;N)^{(n)}\rightarrow \widetilde{\KR}(S;N)^{(n)}\circ\chi$ is the wedge of the natural maps $\chi_\ast\colon N_s\rightarrow N_{\phi_\chi(s)}$. Let us define the structure maps of the spectrum $\widetilde{\KR}(S;N)$.
Let $\Delta^{2,1}$ be the topological $2$-simplex $\Delta^{2}$ with the $\mathbb{Z}/2$-action that reverses the order of the coordinates. There is a canonical homeomorphism between $S^{2,1}=\Delta^{2,1}/\partial$ and the regular representation sphere $S^{\rho}$ of $\mathbb{Z}/2$. 
The structure maps of the spectrum are induced by the composite
\[\xymatrix{\widetilde{\KR}(S;N)_n\times (\Delta^{2,1})^{\times m}\ar[r]^-{\cong}\ar[ddd]&  |d(\widetilde{\KR}(S;N)^{(n)}\times(\Delta^{2,1})^{\times m})|\ar[d]^\cong\\
 &\big(\coprod_{\underline{p}\in\mathbb{N}^n}\widetilde{\KR}(S;N)^{(n)}_{\underline{p}}\times\Delta^{p_1}\times\dots\times\Delta^{p_n}\times(\Delta^{2,1})^{\times m} \big)/_{\sim}\ar[d]^{\kappa^m}\\
 & \big(\coprod_{\underline{p}\in\mathbb{N}^n}\widetilde{\KR}(S;N)^{(n+m)}_{\underline{p},2,\dots,2}\times\Delta^{p_1}\times\dots\times\Delta^{p_n}\times(\Delta^{2,1})^{\times m}\big)/_{\sim}\ar[d]\\
\widetilde{\KR}(S;N)_{n+m}\ar@{=}[r]&\big(\coprod_{\underline{q}\in\mathbb{N}^{n+m}}\widetilde{\KR}(S;N)^{(n+m)}_{\underline{q}}\times\Delta^{q_1}\times\dots\times\Delta^{q_{n+m}}\big)/_\sim
}\]
where the bottom right vertical map is the inclusion of the $\underline{p}$-component into the $\underline{q}=(\underline{p},2\dots,2)$-component. By assumption $\widetilde{\KR}(S;N)^{(n)}$ is $1$-reduced, and therefore this map descends to a map
\[\sigma_{n,m}\colon \widetilde{\KR}(S;N)_{n}\wedge S^{m\rho}\longrightarrow \widetilde{\KR}(S;N)_{n+m}\]
on the quotient $S^{m\rho}=(S^{2,1})^{\wedge m}=(\Delta^{2,1}/\partial)^{\wedge m}$.

\begin{defn}\label{defrelkr}
The $(\Sigma_n\times\mathbb{Z}/2)$-spaces $\widetilde{\KR}(S;N)_n$ together with the maps $\sigma_{n,m}$ define a symmetric $\mathbb{Z}/2$-spectrum $\widetilde{\KR}(S;N)$, called the Real $K$-theory spectrum of the coefficients system $(S,N)$.
The Real $K$-theory of an exact category with duality $C$ with coefficients in a bimodule with duality $M\colon C^{op}\otimes C\rightarrow Ab$ is the Real $K$-theory
\[\widetilde{\KR}(C;M)=\widetilde{\KR}(S(\mathcal{D}C);N_{\mathcal{D}M})\]
of the coefficients system $(S(\mathcal{D}C),N_{\mathcal{D}M})$ of Example \ref{coeffsystbimod}.
\end{defn}

The construction of the spectrum $\widetilde{\KR}(S;N)$ can be repeated verbatim with wedges replaced by direct sums of Abelian groups. This leads to our definition of Real MacLane homology.

\begin{defn}\label{defMacLane}
The Real MacLane homology of a coefficients system $(S,N)$ is the symmetric $\mathbb{Z}/2$-spectrum $\HR(S;N)$ defined by the geometric realization of the Real simplicial sets
\[\HR(S;N)^{(n)}_{\underline{p}}=\bigoplus_{s\in S^{(n)}_{\underline{p}}}N_s\]
The Real MacLane homology $\HR(C;M)$ of an exact category with duality $C$ with coefficients in a bimodule with duality $M\colon C^{op}\otimes C\rightarrow Ab$ is the Real MacLane homology of the coefficients system $(S(\mathcal{D}C),N_{\mathcal{D}M})$ of Example \ref{coeffsystbimod}.
\end{defn}

This definition is analogous to the model for $\THH$ used in \cite[3.1]{DM}, which lies in between the standard definitions of MacLane homology and of topological Hochschild homology. These three theories are all equivalent (non-equivariantly), as proved in \cite{FPSVW} and \cite{DM}. The author's thesis \cite{thesis} contains a theory of Real topological Hochschild homology, which is equivalent to the Real MacLane homology of Definition \ref{defMacLane}, at least when $2$ is invertible (see  \cite[4.12.2]{thesis}).

The inclusion of wedges into direct sums induces an equivariant map of symmetric $\mathbb{Z}/2$-spectra
\[\widetilde{\KR}(S;N)\longrightarrow\HR(S;N)\]
analogous to the trace map of \cite{DM}. Section \ref{relKRanal} studies the $\mathbb{Z}/2$-analytic properties of this map.

\subsection{Analytic properties of Real algebraic $K$-theory with coefficients}\label{relKRanal}

Given a coefficients system $(S,N)$ and a pointed set $X$, one can replace the coefficients functor $N\colon \Simp(S^{(n)})^{op}\rightarrow Ab$ with the functor $N(X)\colon \Simp(S^{(n)})^{op}\rightarrow Ab$ that sends $s$ in $S^{(n)}_{\underline{p}}$ to the Abelian group 
\[N_s(X)=\big(\bigoplus_{x\in X}N_s\cdot x\big)/_{N_s\cdot\ast}\]
of Example \ref{DoldThom}.
This gives a new coefficients system $(S,N(X))$, with an associated Real $K$-theory spectrum $\widetilde{\KR}\big(S;N(X)\big)$. This construction is functorial in the set $X$, and we extend it degreewise to an enriched functor
\[\widetilde{\KR}\big(S;N(-)\big)\colon\mathbb{Z}/2\mbox{-}\mathcal{S}_\ast\longrightarrow\Sp^{\Sigma}_{\mathbb{Z}/2}\]
as explained in Example \ref{extendfctrs}.
As the group $\mathbb{Z}/2$ has only two subgroups, we will denote a function $\nu\colon\{H\leq \mathbb{Z}/2\}\rightarrow \mathbb{Q}$ by listing its values, starting with the trivial subgroup: $\nu=(\nu(1),\nu(\mathbb{Z}/2))$.

\begin{theorem}\label{KRanalytic}
The functor $\widetilde{\KR}\big(S;N(-)\big)\colon \mathbb{Z}/2\mbox{-}\mathcal{S}_\ast\to\Sp^{\Sigma}_{\mathbb{Z}/2}$ enjoys the following properties:
\begin{enumerate}[label=\emph{\arabic*})]
\item It is a reduced homotopy functor,
\item it is $\mathbb{Z}/2$-$\rho_0$-analytic (cf. Definition \ref{Ganalytic}) where $\rho_0$ is the zero function $\rho_0=(0,0)$,
\item it is relatively additive (cf. Definition \ref{reladd}),
\item it preserves connectivity (cf. Definition \ref{presconn}).
\end{enumerate}
\end{theorem}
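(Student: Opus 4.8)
The plan is to reduce all four assertions to connectivity estimates for the ``building blocks'' of the construction and then propagate those estimates through the realization and the spectrum structure. Recall that in spectrum level $n$ and multidegree $\underline p=(p_1,\dots,p_n)$ with all $p_i\geq 2$ (all lower degrees being trivial by $1$-reducedness), the pointed $\mathbb{Z}/2$-space underlying $\widetilde{\KR}(S;N(X))$ is $\bigvee_{s\in S^{(n)}_{\underline p}}N_s(X)$. The indexing set $J=S^{(n)}_{\underline p}$ is a \emph{finite} $\mathbb{Z}/2$-set via the duality $w$, and for each fixed $s$ the functor $X\mapsto N_s(X)$ is the equivariant Dold--Thom construction of Example \ref{DoldThom} (with the possibly twisted $\mathbb{Z}/2$-action on the Abelian group $N_s$ coming from $w$ when $ws=s$), hence a $\mathbb{Z}/2$-linear homotopy functor. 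Statement $1)$ is then immediate: $N_s(\ast)=0$ gives reducedness, and since the Dold--Thom construction, wedges, and realizations all preserve weak equivalences of simplicial $H$-sets for every $H\leq\mathbb{Z}/2$, the functor extended as in Example \ref{extendfctrs} is a homotopy functor in the sense of Definition \ref{htpyfctr}.

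The technical heart is a single levelwise estimate, to be proved for the functor $X\mapsto \bigvee_{s\in S^{(n)}_{\underline p}}N_s(X)$ uniformly in $n$ and $\underline p$. First, $\conn_H N_s(X)\geq \min_{K\leq H}\conn X^K$ up to an additive constant, since the Bredon homology of a $\mathbb{Z}/2$-$CW$-complex vanishes below its connectivity; passing to the indexed wedge over $J$ and using Lemma \ref{connwedgesitoprodtop} (and Proposition \ref{wedgesintoprod} at the spectrum level) one controls $\conn_H\bigl(\bigvee_J N_s(X)\bigr)$ and shows the comparison $\bigvee_J N_s(X)\to\prod_J N_s(X)$ is highly connected, in the range of Definition \ref{defstablin}$(2)$. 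Second, each $N_s(-)$ being $\mathbb{Z}/2$-linear sends strongly homotopy cocartesian $(n+1)$-cubes to highly homotopy cartesian cubes (apply the classical Blakers--Massey iteration on each $H$-fixed-point space); since $\prod_J N_s(-)$ is still linear and the wedge differs from the product only by the connectivity shift just estimated, the functor $\bigvee_J N_s(-)$ satisfies $E^{\mathbb{Z}/2}_n(\cdot)$ and $W(\cdot)$ with constants independent of $n$ and $\underline p$, the ``$q$'' of Definition \ref{Ganalytic} absorbing the discrepancy between the linear estimate and the wedge-to-product loss. The reason the analyticity constant here is the honest zero function $\rho_0=(0,0)$, rather than something with a $-1$, is that after the edgewise subdivision of Remark \ref{sub} the $\mathbb{Z}/2$-fixed points of $\bigvee_s N_s(X)$ split off a summand indexed by the self-dual $s$ carrying the involution on $N_s(X)$, whose fixed-point connectivity is again governed by $\conn X^{\mathbb{Z}/2}$ up to a constant.

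It then remains to assemble these levelwise estimates. The realization over $(\Delta R^{op})^{\times n}$ is a homotopy colimit, hence preserves connectivity and, after the edgewise subdivision, commutes with $\mathbb{Z}/2$-fixed points; $1$-reducedness contributes a fixed positive shift in each of the $n$ simplicial directions. Thus the levelwise bounds pass to each spectrum space $\widetilde{\KR}(S;N(X))_n$ with controlled loss, and since $\pi^H_\ast\widetilde{\KR}(S;N(X))$ is the colimit over $n$ of $\bigl(\Omega^{n\rho+\ast}\widetilde{\KR}(S;N(X))_n\bigr)^H$, subtracting $\dim(S^{n\rho})^H$ still leaves a bound diverging in $n$ in the required direction. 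This yields $E^{\mathbb{Z}/2}_n(\rho_0-\tfrac{q}{n+1},\rho_0+1)$ and $W(v,\rho_0+1)$ for suitable $q,v$, proving $2)$, and simultaneously the estimate of Definition \ref{presconn}, proving $4)$. For $3)$ I would argue exactly as in Remark \ref{linreladd}: the cofiber sequence $B\to X\wedge_B(J_+\times B)\to\bigvee_J X/B$ together with the fact that the Dold--Thom blocks turn wedges into sums ($N_s(\bigvee_J X)\cong\bigoplus_J N_s(X)$) reduces relative additivity over an arbitrary $B$ to the absolute condition $W$ already established.

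\textbf{Main obstacle.} The delicate point is the fixed-point analysis in the middle step: correctly identifying, via the edgewise subdivision of Remark \ref{sub}, the $\mathbb{Z}/2$-fixed points of the multisimplicial object $\underline p\mapsto \bigvee_{s\in S^{(n)}_{\underline p}}N_s(X)$ as a ``hermitian'' construction on the self-dual simplices, and checking that its connectivity is governed by $\conn X^{\mathbb{Z}/2}$ up to an additive constant \emph{uniformly} in the spectrum level $n$ and in $\underline p$. Once this fixed-point connectivity input is in place, the remaining bookkeeping --- tracking the interaction of the $S^{2,1}_{\sbt}$-direction shifts, the diagonal realization, and the spectrum colimit --- is routine, exactly parallel to the non-equivariant analyticity of $\widetilde{\K}(A\ltimes M(-))$.
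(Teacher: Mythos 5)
Your items 1), 3) and 4) track the paper's arguments reasonably well: reducedness and the homotopy-functor property come from the Dold--Thom blocks, relative additivity is proved using the $\mathbb{Z}/2$-linearity of each individual $N_s(-)$ together with Lemma \ref{connwedgesitoprodtop} and Proposition \ref{wedgesintoprod}, and connectivity preservation is the same levelwise estimate pushed through the realization via $1$-reducedness. (One caveat: Remark \ref{linreladd} applies to $G$-linear functors and $\widetilde{\KR}(S;N(-))$ is not $G$-linear, so for 3) you must apply linearity blockwise inside the wedge, as the paper does; also note that the $J$ of condition $W$ is a finite $G$-set indexing copies of $X$, not the generally infinite set of simplices $S^{(n)}_{\underline p}$.) The genuine gap is in item 2), the heart of the theorem. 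You propose to verify $E^{\mathbb{Z}/2}_n$ by comparing the wedge $\bigvee_{s}N_s(\chi)$ with the product $\prod_{s}N_s(\chi)$ (infinitely cartesian since each $N_s$ is linear) and ``absorbing the wedge-to-product loss into $q$''. This fails for two reasons. Quantitatively, $E^{\mathbb{Z}/2}_n(\rho_0-\tfrac{q}{n+1},\rho_0+1)$ requires the image of a strongly cocartesian $(n+1)$-cube with initial maps $e_i$ to be roughly $\big(\sum_{i=1}^{n+1}\conn e_i\big)$-cartesian, a quantity growing with $n$ and with the $\conn e_i$; the wedge-to-product comparison at a vertex is only about $(2\conn\chi_T+1)$-connected, which is controlled by the connectivity of the \emph{spaces} of the cube (these can stay near $0$ while the $\conn e_i$ are arbitrarily large) and does not grow with $n$. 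So for $n\geq 2$ the discrepancy is not a constant $q$. Structurally, the product over $s\in S^{(n)}_{\underline p}$ does not even assemble into a multisimplicial object (a face map induces $N_s\to N_{\sigma^\ast s}$, hence a map of wedges but not of products), and geometric realization is a homotopy colimit, so it does not preserve cartesianness of cubes; a levelwise cartesianness estimate cannot be propagated to the spectrum this way.

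The paper's proof (adapting McCarthy) resolves both issues by arguing through \emph{cocartesianness}: each fixed-point cube $N_s(\chi)^{\mathbb{Z}/2}$ is strongly cartesian because $N_s(-)$ is $\mathbb{Z}/2$-linear, hence by the dual Blakers--Massey theorem it is $c_2$-cocartesian with $c_2=k+\sum_j\min\{\conn e_j,\conn e_j^{\mathbb{Z}/2}\}$; cocartesianness passes exactly through the indexed wedge and through the realization (both homotopy colimits), with $1$-reducedness adding $n$ on fixed points and $2n$ on underlying spaces in spectrum degree $n$; and for cubes of spectra a $c$-cocartesian $(k+1)$-cube is $(c-k)$-cartesian, so the only loss is the additive $k$ already present in the Blakers--Massey estimate. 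This conversion cartesian $\to$ cocartesian $\to$ cartesian is the missing step; without it the conditions $E^{\mathbb{Z}/2}_n$ for $n\geq 2$, and hence $\mathbb{Z}/2$-$\rho_0$-analyticity, are not established. Your identification of the fixed points via edgewise subdivision is indeed a necessary ingredient, but it is not the main engine of the argument.
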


\begin{proof}[Proof of \ref{KRanalytic}] The functor
$\widetilde{\KR}(S;N(-))$ is clearly reduced. Since the Dold-Thom construction $N_s(-)\colon\mathbb{Z}/2\mbox{-}\mathcal{S}_\ast\rightarrow{\mathbb{Z}/2}\mbox{-}\Top_\ast $ is a homotopy functor for every $s$ in $S^{(n)}$ (see Appendix \ref{confGlin}), it follows that $\widetilde{\KR}(S;N(-))$ sends ${\mathbb{Z}/2}$-equivalences to levelwise ${\mathbb{Z}/2}$-equivalences of spectra, and it is therefore a homotopy functor.

Let us show that $\widetilde{\KR}(S;N(-))$ satisfies $E_{k}^{\mathbb{Z}/2}(0,-1)$.
Let  $\chi\colon\mathcal{P}(\underline{k+1})\rightarrow \mathbb{Z}/2\mbox{-}\mathcal{S}_\ast$ be a strongly cocartesian  $(k+1)$-cube. We need to prove that $\widetilde{\KR}(S;N(\chi))$ is $\nu=(\nu_1,\nu_2)$-cartesian, for
\[\nu=(\sum_{1\leq j\leq k+1}\conn e_j,\sum_{1\leq j\leq k+1}\min\{\conn e_j,\conn e^{{\mathbb{Z}/2}}_j\})\]
We adapt McCarthy's argument from \cite{McCarthy}.
For every fixed $n$ there is a natural equivariant homeomorphism
\[\widetilde{\KR}(S;N(\chi))_{n}^{\mathbb{Z}/2}\cong|\bigvee_{s\in (sd_e S^{(n)})^{\mathbb{Z}/2}}N_{s}(\chi)^{\mathbb{Z}/2}|\]
where $sd_e$ is the edgewise subdivision functor (see \ref{sub}).
Notice that for $s$ in $(sd_e S^{(n)})^{\mathbb{Z}/2}$ the natural transformation map $w\colon N\to N\circ \Simp(w)^{op}$ defines indeed an involution on $N_s$. By the Appendix \ref{confGlin} the squares $N_{s}(\chi)^{\mathbb{Z}/2}$ are strongly cartesian, and its final maps $N_s(f_j\colon \chi_{\underline{k+1}\backslash\{j\}}\rightarrow \chi_{\underline{k+1}})^{\mathbb{Z}/2}$ have at least connectivity
\[\conn N_s(f_j)^{\mathbb{Z}/2}\geq \min\{\conn f_j,\conn f_{j}^{\mathbb{Z}/2}\}\geq\min\{\conn e_j,\conn e_{j}^{\mathbb{Z}/2}\}\]
The second inequality holds because $\chi$ is strongly cocartesian, and the first one because $N_s(-)$ preserves connectivity.
By the dual Blakers-Massey theorem (for spaces) the cube $N_{s}(\chi)^{\mathbb{Z}/2}$ is 
\[c_2:=k+\sum_{1\leq j\leq k+1}\min\{\conn e_j,\conn e_{j}^{\mathbb{Z}/2}\}\]
cocartesian. Since wedges commute with homotopy cofibers, the homotopy cofiber $(C_{\underline{p}}^{(n)})^{\mathbb{Z}/2}$ of the map
\[\hocolim_{\mathcal{P}(\underline{k+1})\backslash\underline{k+1}}\bigvee_{s\in (sd_e S^{(n)})_{\underline{p}}^{\mathbb{Z}/2}}N_{s}(\chi)^{\mathbb{Z}/2}\longrightarrow \bigvee_{s\in (sd_e S^{(n)})_{\underline{p}}^{\mathbb{Z}/2}}N_{s}(\chi_{\underline{k+1}})^{\mathbb{Z}/2}\]
is $c_2$-connected for every $\underline{p}$ in $\mathbb{N}^n$. Since $S^{(n)}$ is $1$-reduced, the edgewise subdivision $(sd_e S^{(n)})^{\mathbb{Z}/2}$ is $0$-reduced in each of its $n$ simplicial directions. Therefore so is  $(C^{(n)})^{\mathbb{Z}/2}$, and its geometric realization is $(c_2+n)$-connected. Since homotopy cofibers commute with realizations, there is a cofiber sequence of spaces
\[\hocolim_{\mathcal{P}(\underline{k+1})\backslash\underline{k+1}}\widetilde{\KR}(S;N(\chi))_{n}^{\mathbb{Z}/2}\longrightarrow \widetilde{\KR}(S;N(\chi_{\underline{k+1}}))_{n}^{\mathbb{Z}/2}\longrightarrow |(C^{(n)})^{\mathbb{Z}/2}|\]
with  $(c_2+n)$-connected cofiber. This shows that the cofiber $C$ of the map of spectra
\[\hocolim_{\mathcal{P}(\underline{k+1})\backslash\underline{k+1}}\widetilde{\KR}(S;N(\chi))\longrightarrow \widetilde{\KR}(S;N(\chi_{\underline{k+1}}))\]
is levelwise $(c_2+n)$-connected on fixed points. A similar argument (cf. \cite[3.2]{McCarthy}) shows that it is non-equivariantly levelwise $(c_1+2n)$-connected, where 
\[c_1:=k+\sum_{1\leq j\leq k+1}\conn e_j\]
Therefore the $\mathbb{Z}/2$-spectrum $C$ is
\[(c_1+2n,c_2+n)-(\dim S^{n\rho},\dim (S^{n\rho})^{\mathbb{Z}/2})=(\nu_1+k,\nu_2+k)\]
connected. Since the homotopy fiber of a map of spectra is the loop of the cofiber, the map in the cofiber sequence above is $(\nu_1+k,\nu_2+k)$-connected, that is the $(k+1)$-cube $KR(S;N(\chi))$ is $(\nu_1+k,\nu_2+k)$-cocartesian. Therefore it is $\nu$-cartesian.

Let us show that $\widetilde{\KR}(S;N(-))$ is relatively additive. 
Given a finite based simplicial $\mathbb{Z}/2$-set $B$ in $\mathbb{Z}/2\mbox{-}\mathcal{S}_\ast$, a retractive $\mathbb{Z}/2$-space $(X,p,j)$ in $\mathbb{Z}/2\mbox{-}\mathcal{S}_B$ and a finite set $J$ with involution, we need to estimate the connectivity of
\[\iota\colon\widetilde{\KR}\big(S;N(X\wedge_B(J_+\times B))\big)\longrightarrow {\stackrel[\widetilde{\KR}(S;N(B))]{}{\widetilde{\prod}}}^{\!\!\!\!\!\!\!\!\!\!\!\!J}\widetilde{\KR}(S;N(X))\]
In spectrum level $n$ and simplicial degree $\underline{p}$ this map fits into the commutative diagram
\[\xymatrix@R=16pt{\bigvee\limits_{s\in S^{(n)}_{\underline{p}}}\!\!\!N_s(X\wedge_B(J_+\times B))\ar[rr(.8)]&& \ \ \ \ {\stackrel[\big(\!\!\!\!\!\bigvee\limits_{t\in S^{(n)}_{\underline{p}}}\!\!\!\!\!N_t(B)\big)]{}{\widetilde{\prod}}}^{\!\!\!\!\!\!\!\!\!\!\!\!J}\ \ \Big(\bigvee\limits_{s\in S^{(n)}_{\underline{p}}}N_s(X)\Big)
\\
\bigvee\limits_{s\in S^{(n)}_{\underline{p}}}\!\!\Big({\stackrel[N_s(B)]{}{\widetilde{\prod}}}^{\!\!\!\!\!J}N_s(X)\Big)\ar@{<-}[u(.8)]^-{\simeq}&\!\!\!\!\bigvee\limits_{s\in S^{(n)}_{\underline{p}}}\Big({\bigvee\limits_{N_s(B)}}^{\!\!\!\!\!J}N_s(X)\Big)\ar[l]\ar[r]^-{\cong}\ar[ul(.8)]&{\bigvee\limits
_{\big(\!\!\!\!\!\bigvee\limits_{t\in S^{(n)}_{\underline{p}}}\!\!\!\!\!N_t(B)\big)}}^{\!\!\!\!\!\!\!\!\!\!\!\!\!J}\ \ \Big(\bigvee\limits_{ s\in S^{(n)}_{\underline{p}}}N_s(X)\Big)\ar[u(.7)]_-{\simeq}
}\]
The left-hand vertical map is an equivalence, since $N_s(-)$ is $\mathbb{Z}/2$-linear for every $s$ in $(S^{(n)}_{\underline{p}})^{\mathbb{Z}/2}$ (cf. Remark \ref{linreladd} and Appendix \ref{confGlin}). The right-hand vertical map defines a $\mathbb{Z}/2$-equivalence of symmetric $\mathbb{Z}/2$-spectra by Proposition \ref{wedgesintoprod}. Therefore the connectivity of the top horizontal map is determined by the left-hand bottom map. The calculation of Lemma \ref{connwedgesitoprodtop} expresses its connectivity in terms of the connectivity of the maps $N_s(p)\colon N_s(X)\rightarrow N_s(B)$. By linearity of $N_s(-)$ its splitting $N_s(j)$ fits into a fiber sequence $N_s(B)\rightarrow N_s(X)\rightarrow N_s(X/B)$, and therefore
\[\begin{array}{ll}\conn N_s(p)^H&= \conn N_s(j)^H+1=\conn N_s(X/B)^H\\
&\geq\min_{K\leq H}\conn X/B\geq \min_{K\leq H}\conn p^K\end{array}\]
By Lemma \ref{connwedgesitoprodtop} our map is
\[\nu(H)=\min\{2\conn p^H,\min_{K\lneqq H}\conn p^K\}-1\]
connected in $\pi^{H}_\ast$.
Arguing like before, the fact that $(S,N)$ is $1$-reduced shows that the connectivity of our map in spectrum degree $n$ is $\nu+(2n,n)$, and therefore $\nu$-connected on the homotopy colimit.

It remains to show that $\widetilde{\KR}(S;N(-))$ preserves connectivity. The argument is similar to the calculation of the connectivity of the cofiber $C^{(n)}$ above, using that $N(-)$ preserves connectivity.
\end{proof}

We also extend MacLane homology to a functor $\HR\big(S;N(-)\big)\colon\mathbb{Z}/2\mbox{-}\mathcal{S}_\ast
\to\Sp^{\Sigma}_{\mathbb{Z}/2}$ in a similar way.
The inclusion of wedges into direct sums defines a natural transformation $\widetilde{\KR}\big(S;N(-)\big)\to\HR\big(S;N(-)\big)$.

\begin{prop}\label{Drel}
The functor $\HR\big(S;N(-)\big)\colon\mathbb{Z}/2\mbox{-}\mathcal{S}_\ast
\to\Sp^{\Sigma}_{\mathbb{Z}/2}$ is $\mathbb{Z}/2$-linear, and the inclusion of wedges into direct sums induces a natural $\pi_\ast$-equivalence of symmetric $\mathbb{Z}/2$-spectra,
\[D_\ast\widetilde{\KR}\big(S;N(X)\big)\stackrel{\simeq}{\longrightarrow}D_\ast\HR\big(S;N(X)\big)\stackrel{\simeq}{\longleftarrow}\HR\big(S;N(X)\big)\stackrel{\simeq}{\longleftarrow}\HR\big(S;N\big)\wedge|X|\]
for every $X$ in $\mathbb{Z}/2\mbox{-}\mathcal{S}_\ast$.
\end{prop}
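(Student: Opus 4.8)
The plan is to establish the three $\pi_\ast$-equivalences in the statement from right to left, exploiting that $\HR(S;N(-))$ is built, levelwise and in each multisimplicial degree, out of the equivariant Dold--Thom functors $N_s(-)$, which are $\mathbb{Z}/2$-linear by Appendix \ref{confGlin}. First I would show that $\HR(S;N(-))$ is $\mathbb{Z}/2$-linear. It is reduced since $N_s(\ast)=0$. For the square axiom: given a homotopy cocartesian square $\chi$ in $\mathbb{Z}/2\mbox{-}\mathcal{S}_\ast$, each $N_s(\chi)$ is homotopy cartesian by $\mathbb{Z}/2$-linearity of $N_s(-)$; a direct sum of homotopy cartesian squares of simplicial abelian groups is again homotopy cartesian because $\bigoplus$ is exact, hence commutes with homotopy fibres and with $\pi_\ast$; geometric realization of (proper) multisimplicial abelian groups commutes with these homotopy pullbacks; and homotopy pullbacks of $\mathbb{Z}/2$-spectra are formed levelwise, so $\HR(S;N(\chi))$ is homotopy cartesian. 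For the indexed-wedge axiom, the Dold--Thom construction carries a wedge of pointed simplicial $\mathbb{Z}/2$-sets to an \emph{isomorphic} product of simplicial abelian groups, so for a finite $\mathbb{Z}/2$-set $J$ the canonical map $N_s(\bigvee_J X)\to\prod_J N_s(X)$ is an equivariant isomorphism; applying $\bigoplus_{s}$ and using that finite products commute with direct sums identifies the canonical map $\HR(S;N(X\wedge_\ast(J_+\times\ast)))\to\prod_J\HR(S;N(X))$ with an isomorphism in each multisimplicial degree and each spectrum level, hence after realization. Thus $\HR(S;N(-))$ is $\mathbb{Z}/2$-linear.

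Granting this, Corollary \ref{corlinearsmashspectrum} provides the natural $\pi_\ast$-equivalence $\HR(S;N)\wedge|X|=\HR\big(S;N(S^0)\big)\wedge|X|\xrightarrow{\ \simeq\ }\HR(S;N(X))$, which is the right-most map in the statement. Since a $\mathbb{Z}/2$-linear functor maps equivalently to its own differential (Remark \ref{rightestimate} 2)), the canonical map $\HR(S;N(X))\to D_\ast\HR(S;N(X))$ is a $\pi_\ast$-equivalence, giving the middle map (recall $D_\ast$ is the differential at the point, which for a reduced functor $\Phi$ is $D_\ast\Phi(X)=\hocolim_n\Omega^{n\rho}\Phi(X\wedge S^{n\rho})$).

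It remains to prove that $D_\ast$ of the natural transformation $\widetilde{\KR}(S;N(-))\to\HR(S;N(-))$, the inclusion of wedges into direct sums, is a $\pi_\ast$-equivalence. At an object $X$ this map is $\hocolim_n\Omega^{n\rho}$ of the map $\widetilde{\KR}(S;N(Y))\to\HR(S;N(Y))$ evaluated at $Y=X\wedge S^{n\rho}$; in spectrum level $n$ and multisimplicial degree $\underline p$ this latter map is the inclusion $\bigvee_{s\in S^{(n)}_{\underline p}}N_s(Y)\hookrightarrow\bigoplus_{s\in S^{(n)}_{\underline p}}N_s(Y)$ of a (possibly infinite) wedge of simplicial abelian groups into their direct sum. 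Writing the direct sum as a filtered colimit over finite index sets and using that the inclusion of a finite wedge of $c$-connected spaces into their product is roughly $2c$-connected (by Blakers--Massey, the fibre being assembled from Whitehead products which vanish in that range), one finds that this inclusion is as connected as twice the connectivity of the $N_s(Y)$, uniformly in $\underline p$. Since $N_s(-)$ preserves connectivity, $N_s(Y)$ is as connected as $Y=X\wedge S^{n\rho}$ up to a constant, and on $\mathbb{Z}/2$-fixed points one passes to the edgewise subdivision exactly as in the proof of Theorem \ref{KRanalytic}, where $N_s(Y)^{\mathbb{Z}/2}$ is controlled by $Y^{\mathbb{Z}/2}$; the $1$-reducedness of $(S,N)$ then upgrades the realized connectivity by $2n$ non-equivariantly and by $n$ on fixed points. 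As $\conn(X\wedge S^{n\rho})^H\to\infty$ with $n$, the $n$-th stage of the homotopy colimit defining $D_\ast$ of this map becomes arbitrarily highly connected, so $D_\ast$ of the inclusion is a $\pi_\ast$-equivalence; this is the left-most map. (Alternatively, since $\widetilde{\KR}(S;N(-))$ is stably $\mathbb{Z}/2$-excisive by Theorem \ref{KRanalytic}, both $D_\ast\widetilde{\KR}(S;N(-))$ and $D_\ast\HR(S;N(-))$ are $\mathbb{Z}/2$-linear by Proposition \ref{derstablinislin}, and by Corollary \ref{corlinearsmashspectrum} one may reduce to checking the natural transformation at $X=S^0$.)

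The main obstacle is precisely this last step: obtaining the connectivity estimate for the inclusion $\bigvee_s N_s(Y)\hookrightarrow\bigoplus_s N_s(Y)$ of an infinite wedge of simplicial abelian groups into their direct sum, uniformly in the multisimplicial degree, and then carrying the $1$-reducedness boost correctly through the multisimplicial realization and the $\Omega^{n\rho}$-shift in the colimit. This is the $\mathbb{Z}/2$-equivariant counterpart of McCarthy's comparison of relative $K$-theory with MacLane homology in \cite{McCarthy}, and as there the fixed-point analysis via edgewise subdivision is what makes the argument equivariant; the remaining ingredients (linearity of $\HR$, the consequences of Corollary \ref{corlinearsmashspectrum} and Proposition \ref{derstablinislin}) are formal.
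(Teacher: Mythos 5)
Your proposal is correct and follows essentially the same route as the paper: establish $\mathbb{Z}/2$-linearity of $\HR(S;N(-))$ by reducing to the $\mathbb{Z}/2$-linearity of the Dold--Thom functors $N_s(-)$ (the paper phrases the square axiom via the assembly-map criterion $\Phi(X)\to\Omega\Phi(X\wedge S^1)$, but the content is the same), deduce the middle and right-hand equivalences from Remark \ref{rightestimate} and Corollary \ref{corlinearsmashspectrum}, and get the left-hand one from the connectivity estimate of Lemma \ref{connwedgesitoprodtop} for the wedge-into-sum inclusion evaluated at $X\wedge S^{n\rho}$ and looped down by $\Omega^{n\rho}$. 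The only point to make explicit is that on $\mathbb{Z}/2$-fixed points the direct sum indexed by the $\mathbb{Z}/2$-set $S^{(n)}_{\underline p}$ decomposes into summands $N_s(-)^{\mathbb{Z}/2}$ for fixed $s$ (where equivariant linearity of $N_s(-)$ is used) plus non-equivariant summands over free orbits, exactly as the paper spells out.
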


\begin{proof}
To show that $\HR\big(S;N(-)\big)$ sends homotopy cocartesian squares to homotopy cartesian squares, it is enough to show that the map
\[\HR\big(S;N(X)\big)_n\to\Omega\HR\big(S;N(X\wedge S^1)\big)_n\]
is an equivalence of pointed $\mathbb{Z}/2$-spaces for every $X$ in $\mathbb{Z}/2\mbox{-}\mathcal{S}_\ast$ (e.g. by \cite[1.8]{calcIII}). Since both $\HR\big(S;N(X)\big)_n$ and $\HR\big(S;N(X)\big)_{n}^{\mathbb{Z}/2}$ are geometric realizations of simplicial Abelian groups, loop spaces and realizations commute, and the assembly map above factors as
\[|\!\!\!\bigoplus_{s\in sd_e S^{(n)}}\!\!\!N_s(X)|\longrightarrow|\Omega\!\!\!\bigoplus_{s\in sd_e S^{(n)}}\!\!\!N_s(X\wedge S^1)|\stackrel{\simeq}{\longrightarrow}\Omega|\!\!\!\bigoplus_{s\in sd_e S^{(n)}}\!\!\!N_s(X\wedge S^1)|
\]
Since loops commute with indexed direct sums, it is enough to show that
\[\bigoplus_{s\in S^{(n)}_{\underline{p}}}N_s(X)\longrightarrow\bigoplus_{s\in S^{(n)}_{\underline{p}}}\Omega N_s(X\wedge S^1)
\]
is an equivalence of simplicial $\mathbb{Z}/2$-sets for every $\underline{p}$. Non-equivariantly this is just linearity of the Dold-Thom construction $N_s(-)$. On $\mathbb{Z}/2$-fixed points the map is, up to isomorphism, the direct sum of the assembly maps
\[\bigoplus_{s\in (S^{(n)}_{\underline{p}})^{\mathbb{Z}/2}}\!\!\! N_s(X)^{\mathbb{Z}/2}\oplus \bigoplus_{s\in F^{(n)}_{\underline{p}}}N_s(X) \longrightarrow\bigoplus_{s\in (S^{(n)}_{\underline{p}})^{\mathbb{Z}/2}}\!\!\!\Omega N_s(X\wedge S^1)^{\mathbb{Z}/2}\oplus \bigoplus_{s\in F^{(n)}_{\underline{p}}}\Omega N_s(X\wedge S^1)
\]
where $F^{(n)}_{\underline{p}}$ is a set of representatives for the free orbits of $S^{(n)}_{\underline{p}}$.
This is an equivalence by linearity of $N_s(-)$ on the second summands, and by $\mathbb{Z}/2$-linearity of $N_s(-)$, for $s$ fixed in $S^{(n)}_{\underline{p}}$, on the first summands (see Appendix \ref{confGlin}). To see that $\HR\big(S;N(-)\big)$ sends indexed wedges to indexed products, we show in Appendix \ref{confGlin} that for every finite $\mathbb{Z}/2$-set $J$ and Abelian group with $\mathbb{Z}/2$-action $N$, the canonical map from wedges to products is an equivariant isomorphism $N(\bigvee_JX)\cong\prod_JN(X)$. Thus the canonical map for $\HR\big(S;N(-)\big)$ is also an isomorphism
\[|\bigoplus_{s\in S^{(n)}_{\underline{p}}}N_s(\bigvee_J X)|\cong|\bigoplus_{s\in S^{(n)}_{\underline{p}}}\prod_JN_s(X)|\cong \prod_J|\bigoplus_{s\in S^{(n)}_{\underline{p}}}N_s(X)|\]
This shows that $\HR\big(S;N(-)\big)$ is $\mathbb{Z}/2$-linear, and therefore that it is equivalent to its differential (see \ref{rightestimate} and \ref{corlinearsmashspectrum}).

It remains to show that $D_\ast\widetilde{\KR}\big(S;N(-)\big)\to D_\ast\HR\big(S;N(-)\big)$ is an equivalence. The calculation of Lemma \ref{connwedgesitoprodtop} shows that the equivariant connectivity of $\widetilde{\KR}\big(S;N(X)\big)\to \HR\big(S;N(X)\big)$ is
\[\big(2\conn X+1,\min\{2\conn X^{\mathbb{Z}/2},\conn X\}+1\big)\]
Therefore $\Omega^{n\rho}\widetilde{\KR}\big(S;N(X\wedge S^{n\rho})\big)\to \Omega^{n\rho}\HR\big(S;N(X\wedge S^{n\rho})\big)$ is non-equivariantly $\nu_1$-connected, for
\[\nu_1=2(\conn X+2n)+1-2n=2n + 2\conn X+1\]
and $\nu_2$-connected on fixed points, for
\[\nu_2=\min\Big\{2(\conn X+2n)+1-2n,\min\big\{2(\conn X^{\mathbb{Z}/2}+n),\conn X+2n\big\}+1-n\Big\}\]
For $n$ sufficiently large, the second term of the outer minimum in $\nu_2$ is smaller than the first, and $\nu_2$ becomes
\[\nu_2=\min\big\{2(\conn X^{\mathbb{Z}/2}+n),\conn X+2n\big\}+1-n=n+\min\big\{2\conn X^{\mathbb{Z}/2},\conn X\big\}+1\]
The equivariant connectivity $(\nu_1,\nu_2)$ diverges with $n$, and the map on differentials is an equivalence.
\end{proof}

\subsection{Real $K$-theory of semidirect products as Real $K$-theory with coefficients}\label{splitting}

Let $(A,w,\epsilon)$ be a Wall antistructure and $(M,h)$ a bimodule over $(A,\epsilon,w)$ as defined in \ref{bimodule}. The goal of this section is to prove that the Real $K$-theory $\widetilde{\KR}(A\ltimes M)$ of Section \ref{KRsec} is equivalent to the Real $K$-theory of a certain coefficients system, in the sense of Definition \ref{defrelkr}. This is a Real analogue of Theorem \cite[4.1]{DM}. We use this comparison to finish the proofs of Theorem $A$ and Theorem $C$, by means of Theorems \ref{Drel} and \ref{KRanalytic} respectively.

The coefficients system whose $K$-theory compares to $\widetilde{\KR}(A\ltimes M)$ is constructed from a bimodule with duality on $\mathcal{P}_A$ induced by $M$, via the construction of Example \ref{coeffsystbimod}.
Let us define a bimodule $H^M\colon\mathcal{P}_A^{op}\otimes \mathcal{P}_A\rightarrow Ab$ by sending two $A$-modules $P$ and $Q$ to the Abelian group of module maps
\[H^M(P,Q)=\hom_A(P,Q\otimes_AM)\]
There is a duality $J\colon H^M\Rightarrow H^M\circ D_\gamma$ on $H^M$ (in the sense of Definition \ref{defbimoddual}) defined as
\[J_{P,Q}\colon\hom_A(P,Q\otimes_AM)\stackrel{\widehat{J}_{P,Q}}{\xrightarrow{\hspace*{.7cm}}}
\hom_A\big(DQ,\hom_A(P,M_w)\big)\stackrel{\cong}{\longrightarrow}\hom_A\big(DQ,(DP)\otimes_AM\big)\]
Here $M_w$ is the right $A$-module structure on $M$ defined by $m\cdot a:=w(a)\epsilon\cdot m$. The second map  is induced by the canonical isomorphism $ \hom_A(P,A_s)\otimes_A M\rightarrow\hom_A(P,M_w)$. The map $\widehat{J}$ sends a module map $f$ to
\[\xymatrix{\widehat{J}(f)(\lambda)\colon P\ar[r]^-{f}& Q\otimes_AM\ar[r]^{\lambda\otimes\id}& 
A_s\otimes_AM\ar[rr]^-{(w(-)\cdot\epsilon)\otimes h}&& A\otimes_AM_w\cong M_w }\]
By strictifying the dualities this construction induces a bimodule $\mathcal{D}H^M$ over the exact category with strict duality $\mathcal{D}\mathcal{P}_A$, with associated coefficients system $(S(\mathcal{D}\mathcal{P}_A),N_{\mathcal{D}H^M})$, as in Example \ref{coeffsystbimod}. Its associated Real $K$-theory and Real MacLane homology are denoted respectively by $\widetilde{\KR}(A;M)$ and $\HR(A;M)$.

\begin{theorem}\label{relKRandKR}
Let $(M,h)$ be a bimodule over a Wall antistructure $(A,\epsilon,w)$. There is a natural zig-zag of $\pi_\ast$-equivalences of symmetric $\mathbb{Z}/2$-spectra
\[\widetilde{\KR}\big(A\ltimes M\big)\simeq \widetilde{\KR}\big(A;M(S^{1,1})\big)\]
where $M(S^{1,1})$ is the Dold-Thom construction of the Real circle $S^{1,1}=\Delta^{1}/\partial$, with involution $(m_1,\dots,m_p)\mapsto (h(m_p),\dots,h(m_1))$.
\end{theorem}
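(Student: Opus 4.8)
The plan is to adapt the classical identification \cite[4.1]{DM} to the Real setting, decorating each step of that argument with the appropriate duality data and then checking that the resulting zig-zag is $\mathbb{Z}/2$-equivariant.

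First I would unwind both sides. On the left, $\widetilde{\KR}(A\ltimes M)=\hofib\big(\KR(\mathcal{D}\mathcal{P}_{A\ltimes M})\to\KR(\mathcal{D}\mathcal{P}_A)\big)$, the map being induced by the strictification of base change along the ring surjection $A\ltimes M\to A$; since the inclusion $A\to A\ltimes M$, $a\mapsto(a,0)$, is a map of Wall antistructures splitting this surjection, the fibre sequence splits and it suffices to model the relative summand. On the right, $\widetilde{\KR}\big(A;M(S^{1,1})\big)=\widetilde{\KR}\big(S(\mathcal{D}\mathcal{P}_A);N_{\mathcal{D}H^{M(S^{1,1})}}\big)$ is the coefficient-system construction of Definition~\ref{defrelkr} applied to the bimodule with duality $H^{M(S^{1,1})}$ on $\mathcal{P}_A$. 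Here I would record the identity $H^{M(X)}(P,Q)=\hom_A\big(P,Q\otimes_A M(X)\big)\cong\hom_A\big(P,Q\otimes_A M\big)(X)=H^M(P,Q)(X)$, valid because $P$ is finitely generated projective and the Dold--Thom construction is a finite direct sum over $X$; this is what will allow the zig-zag to persist after replacing $M$ by $M(X)$, which is the form needed to deduce Theorem $A$.

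Next I would build the comparison, following \cite[\S4]{DM}. There a finitely generated projective $A\ltimes M$-module together with a trivialisation of its $A$-reduction is encoded by a finitely generated projective $A$-module $P$ and ``coefficient data'' read off from the filtration of the module by powers of $M$; as $M^2=0$ this data is a single $A$-linear map into $P\otimes_A M$, but the $S^{2,1}_\bullet$-construction together with the homotopy fibre resolves it into an extra bar/classifying-space simplicial direction --- precisely the direction producing $M(S^1)$ as the nerve of $M$. Accordingly, the Real $n$-simplicial set $\Ob(S^{2,1}_\bullet)^{(n)}\mathcal{D}\mathcal{P}_{A\ltimes M}$ carries a filtration whose relative layer, once that extra direction is absorbed into the simplex coordinates, admits a natural chain of weak equivalences to $\widetilde{\KR}\big(S(\mathcal{D}\mathcal{P}_A);N_{\mathcal{D}H^{M(S^{1,1})}}\big)^{(n)}$. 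The crucial point is that the Real circle $S^{1,1}$, and not $S^1$ with an ad hoc involution, is forced: the order reversal on the bar direction is intertwined with the duality $D$ on flags (which reverses a flag) and with the involution $h$ on $M$, and on simplices this composite is exactly the order-reversing involution of $S^{1,1}$ followed by $(m_1,\dots,m_p)\mapsto(h(m_p),\dots,h(m_1))$ --- an involution only up to conjugation by $\epsilon$, matching $h^2=\epsilon(-)\epsilon^{-1}$, which is why the strictifications $\mathcal{D}(-)$ of Section~\ref{KRsec} are needed. Passing to geometric realizations, and using Remark~\ref{sub} to work with edgewise subdivisions where the $\mathbb{Z}/2$-fixed points are transparent, turns this into a natural zig-zag of $\mathbb{Z}/2$-equivariant maps of symmetric $\mathbb{Z}/2$-spectra; it is compatible with the $\Sigma_n$-actions and the structure maps $-\wedge S^{m\rho}$ because those are defined by permuting and inserting $S^{2,1}$-directions, which the comparison respects, and each of its maps is a levelwise $\mathbb{Z}/2$-equivalence, hence a $\pi_\ast$-equivalence.

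The main obstacle, relative to \cite{DM}, is purely the equivariant bookkeeping: one must carry the non-strict duality through the encoding of $A\ltimes M$-modules as pairs $(P,\text{coefficient data})$, through the strictification functors $\mathcal{D}(-)$, and through the bar resolution, checking at every stage that the induced involution on the bar coordinate is order reversal composed with $h$ --- this verification is the whole reason $S^{1,1}$ appears and is where I expect the real work to lie. Secondarily, as in \cite{DM} not every finitely generated projective $A\ltimes M$-module is induced from $\mathcal{P}_A$, so one restricts to the cofinal subcategory of modules carrying the relevant filtration and invokes additivity/cofinality; this must be run equivariantly, either by proving the cofinality statement for exact categories with duality directly or by reducing to the non-equivariant statement on underlying spectra together with a separate analysis of the $\mathbb{Z}/2$-fixed points via Remark~\ref{sub}. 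Keeping all of this natural in $(A,w,\epsilon)$ and in the bimodule then yields the natural zig-zag of the statement, and, via Theorems~\ref{Drel} and~\ref{KRanalytic}, Theorems $A$ and $C$.
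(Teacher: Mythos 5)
Your overall strategy is the right one and matches the paper's: realize the relative term of $\KR(A\ltimes M)\to\KR(A)$ as a twisted bar construction on $M$ over the $S^{2,1}_{\sbt}$-construction of $\mathcal{P}_A$, and observe that the duality forces the bar direction to carry the involution of $S^{1,1}$ composed with $h$. But as written the proposal leaves the two steps that actually constitute the proof unargued. The paper's zig-zag factors through the semi-direct product category $i\big((S^{2,1}_{\sbt})^{(n)}\mathcal{D}\mathcal{P}_A\big)\ltimes\mathcal{D}(H^M)^{(n)}_{\sbt}$, and the two substantive inputs are: (i) Proposition \ref{splitPA}, that this category is equivalent, compatibly with the strict dualities, to $i(S^{2,1}_{\sbt})^{(n)}\mathcal{D}\mathcal{P}_{A\ltimes M}$ --- this is the classification of split square-zero extensions of exact categories (Lemma \ref{classsplit}) plus an essential-surjectivity statement for the section on the $S^{2,1}_{\sbt}$-level, proved using split-exactness to decompose diagrams into diagonal kernels; and (ii) Proposition \ref{swallow}, the equivariant swallowing lemma showing that the inclusion $\coprod_{S^{2,1}_{\sbt}\mathcal{D}\mathcal{P}_A}\mathcal{D}M_{\sbt}\hookrightarrow Ni(S^{2,1}_{\sbt}\mathcal{D}\mathcal{P}_A)\ltimes\mathcal{D}M_{\sbt}$ is a $\mathbb{Z}/2$-equivalence. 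Step (ii) is where the nerve direction survives as $M(S^{1,1})$ and where the genuine equivariant difficulty sits: the retraction and simplicial homotopy must be built on the edgewise subdivision and must contract onto the \emph{middle} square of a $(2k+1)$-simplex of the nerve in order to commute with the duality. Your ``filtration whose relative layer, once the extra direction is absorbed into the simplex coordinates, admits a natural chain of weak equivalences'' names the desired conclusion but does not supply either construction, and you yourself flag that this is ``where the real work lies''; so there is a genuine gap here rather than a complete argument.

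Two smaller points. First, your secondary obstacle is misdiagnosed: base change along $A\to A\ltimes M$ \emph{is} essentially surjective onto $\mathcal{P}_{A\ltimes M}$ (idempotents lift along the nilpotent ideal $M$; this is \cite[1.2.5.4]{DGM}, quoted in the paper), so no cofinality or additivity argument is needed at the level of modules --- what does require proof is essential surjectivity after applying $(S^{2,1}_{\sbt})^{(n)}$, which the paper obtains from split-exactness. Second, the splitting of the fibre sequence is used in the paper only at the very end, to identify $\hofib\big(\KR(A;M(S^{1,1}))\to\KR(A)\big)$ with the strict cofibre of the section, which is $\widetilde{\KR}(A;M(S^{1,1}))$ by construction; your phrase ``it suffices to model the relative summand'' gestures at this but you should make the fibre-equals-cofibre step explicit, since $\widetilde{\KR}(A;M(S^{1,1}))$ is defined as a wedge (a cofibre-type object) while $\widetilde{\KR}(A\ltimes M)$ is defined as a homotopy fibre. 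Your observation that $H^{M(X)}(P,Q)\cong H^M(P,Q)(X)$ for $P$ finitely generated projective is correct and is indeed what makes the statement usable for Theorems A and C, but it is not itself part of the proof of the present theorem.
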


The proof of Theorem \ref{relKRandKR} is technical, and it is given at the end of the section. We first end the proofs of Theorems $A$ and $C$ assuming Theorem \ref{relKRandKR}.

\begin{proof}[Proof of Theorem $A$]
We need show that $D_\ast \widetilde{\KR}\big(A\ltimes M(-)\big)$ is equivalent to $\HR(A;{M(S^{1,1})})$. This immediate by \ref{relKRandKR} and \ref{Drel}.
\end{proof}

\begin{proof}[Proof of Theorem $C$]
By Theorem \ref{relKRandKR} it is enough to show that the functor
\[\widetilde{\KR}\big(S(A),N_{M(S^{1,1}) (-)}\big)\colon\mathbb{Z}/2\mbox{-}\mathcal{S}_\ast\longrightarrow\Sp^{\Sigma}_{\mathbb{Z}/2}\]
is $\mathbb{Z}/2$-$\rho$-analytic, for the function $\rho=(-1,0)$.
Here $M(S^{1,1})(X)$ is the iterated configuration bimodule. It is isomorphic to $M(S^{1,1}\wedge X)$ where  $S^{1,1}\wedge X$ is the smash product of the pointed Real simplicial set $S^{1,1}$ and the pointed $\mathbb{Z}/2$-simplicial set $X$. This is the bisimplicial set obtained by taking the smash product levelwise. The $\mathbb{Z}/2$-action is simplicial in the $X$ factor, and Real in the $S^{1,1}$ factor. As usual we realize all the simplicial directions after taking Real $K$-theory, and we take the diagonal of the $\mathbb{Z}/2$-actions coming from each simplicial direction.

The functor $\widetilde{\KR}\big(S(A);N_{M(S^{1,1}\wedge (-))}\big)$  is of the form $\widetilde{\KR}\big(S;N(-)\big)$ precomposed with the functor $S^{1,1}\wedge(-)$.
By Theorem \ref{KRanalytic} we know that given a strongly cocartesian $(n+1)$-cube in $\mathbb{Z}/2\mbox{-}\mathcal{S}_\ast$ with initial maps $e_i$, its image under the composite functor is $\nu=(\nu_1,\nu_2)$-cartesian, for
\[\begin{array}{ll}\nu_1&=\sum_{i=1}^{n+1}\conn(e_i\wedge S^{1,1})\\
&=\sum_{i=1}^{n+1}(\conn e_i+1)
\end{array}\]
and
\[\begin{array}{ll}\nu_2&=\sum_{i=1}^{n+1}\min\{\conn(e_i\wedge S^{1,1}),\conn(e_i\wedge S^{1,1})^{\mathbb{Z}/2} \}\\
&=\sum_{i=1}^{n+1}\min\{\conn e_i+1,\conn e_{i}^{\mathbb{Z}/2}\}
\end{array}\]
This shows that our functor satisfies $E^{\mathbb{Z}/2}_n(\rho,0)$,
and it is therefore $\mathbb{Z}/2$-$\rho$-analytic (with function $q=0$).
\end{proof}

The rest of the paper is dedicated to proving Theorem \ref{relKRandKR}. The equivalence of \ref{relKRandKR} is induced by a zig-zag of Real bisimplicial sets of the form
\[\xymatrix@C=17pt{\coprod\limits_{(S^{2,1}_{\sbt})\mathcal{D}\mathcal{P}_A}
\!\!\!\!\!\!\mathcal{D}M(S^{1,1})_{\sbt}\ar[r]^-{\simeq}& Ni\big((S^{2,1}_{\sbt})\mathcal{D}\mathcal{P}_A\big)\ltimes
\big(\mathcal{D}(H^M)_{\sbt}\big)\ar[r]^-{\simeq}&N i(S^{2,1}_{\sbt})\mathcal{D}\mathcal{P}_{A\ltimes M}&\Ob (S^{2,1}_{\sbt})\mathcal{D}\mathcal{P}_{A\ltimes M}\ar[l]_{\simeq}}\]
The symbol $\ltimes$ denotes the semi-direct product of categories and bimodules, defined in \ref{semi} below. We define these maps and we prove that they are $\mathbb{Z}/2$-equivalences, in Proposition \ref{splitPA} for the middle map, and in Proposition \ref{swallow} for the left and right-hand maps.

Let $C$ be an exact category and $M\colon C^{op}\otimes C\rightarrow Ab$ an additive functor. For a morphism $f\colon c\to d$ in $C$ and an object $b$ in $C$, we denote
\[f_{\ast}=M(\id_b\otimes f)\colon M(b,c)\to M(b,d) \ \ \ \ \ \ \mbox{and}\ \ \ \ \ \ f^{\ast}=M(f\otimes \id_b)\colon M(d,b)\to M(c,b)\]
the induced maps.
\begin{defn}\label{semi}
The semi-direct product $C\ltimes M$ is the category with the same objects of $C$, and with morphism sets $(C\ltimes M)(c,d)=C(c,d)\times M(c,d)$. Composition is defined by
\[\big(f\colon c\rightarrow d,m\in M(c,d)\big)\circ \big(g\colon b\rightarrow c,n\in M(b,c)\big)=\big(f\circ g,f_\ast n+g^\ast m\big)\]
Strict dualities $D$ on $C$ and $J$ on $M$ induce a strict duality $D\ltimes J$ on $C\ltimes M$, defined by $D$ on objects and by $D\times J$ on morphisms. The projection onto the morphisms of $C$ defines a functor $p\colon C\ltimes M\longrightarrow C$, with a section $s\colon C\longrightarrow C\ltimes M$ defined by the inclusion at the zeros of $M$.
\end{defn}

We show that this semi-direct product construction models split square zero extensions of exact categories.
Let $p\colon B\rightarrow C$ and $s\colon C\rightarrow B$ be exact functors, and $U\colon p\circ s\Rightarrow\id$ a natural isomorphism. Define a bimodule $\ker p\colon C^{op}\otimes C\rightarrow Ab$ by
\[(\ker p)(c,d)=\ker\big(B(s(c),s(d))\stackrel{p}{\longrightarrow} C(ps(c),ps(d))\big)\]
Suppose additionally that for every $f$ in $(\ker p)(c,d)$ and $g$ in $(\ker p)(b,c)$ the composite $f\circ g$ is zero in the Abelian group $B(s(b),s(d))$. Then there is a functor
\[F\colon C\ltimes (\ker p)\longrightarrow B\]
that sends an object $c$ to $s(c)$, and a morphism $\big(f\colon c\rightarrow c',m\in (\ker p)(c,c')\big)$ to $s(f)+m$.

\begin{lemma}\label{classsplit}
Let $(p,s,U)$ be as above, and suppose additionally that the section $s$ is essentially surjective. Then the functor $F\colon C\ltimes (\ker p)\longrightarrow B$
 is an equivalence of categories over $C$.
\end{lemma}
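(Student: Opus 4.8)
The plan is to verify directly that $F$ is fully faithful and essentially surjective, and that it lies over $C$ by means of the natural isomorphism $U$.

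Essential surjectivity is immediate: on objects $F$ agrees with $s$ (indeed $F(c)=s(c)$), and $s$ is assumed essentially surjective, so every object of $B$ is isomorphic to one in the image of $F$. For full faithfulness, fix objects $c,c'$ of $C$ and consider the map
\[C(c,c')\times(\ker p)(c,c')\longrightarrow B(s(c),s(c')),\qquad (f,m)\longmapsto s(f)+m,\]
which makes sense since $B$ is additive, so $B(s(c),s(c'))$ is an abelian group. The key step is to build a retraction of $s$ on these hom-groups out of $U$: set $\bar p(g)=U_{c'}\circ p(g)\circ U_c^{-1}$ for $g\in B(s(c),s(c'))$. This is a homomorphism of abelian groups $B(s(c),s(c'))\to C(c,c')$, it factors the map $p\colon B(s(c),s(c'))\to C(ps(c),ps(c'))$ through the isomorphism $C(ps(c),ps(c'))\xrightarrow{\cong}C(c,c')$ induced by $U$, and naturality of $U$ gives $\bar p(s(f))=f$ for every $f$. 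Since $(\ker p)(c,c')$ is by definition the kernel of $p\colon B(s(c),s(c'))\to C(ps(c),ps(c'))$ and an isomorphism does not change kernels, we get $\ker\bar p=(\ker p)(c,c')$. Thus $\bar p$ is a split surjection of abelian groups with section $s$ and kernel $(\ker p)(c,c')$, so $g\mapsto\big(\bar p(g),\,g-s(\bar p(g))\big)$ is a two-sided inverse to $(f,m)\mapsto s(f)+m$; here $g-s(\bar p(g))$ lands in $(\ker p)(c,c')$ precisely because $\bar p\bigl(g-s(\bar p(g))\bigr)=0$. Hence $F$ is fully faithful.

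Finally, $F$ is an equivalence \emph{over} $C$: the projection $pr\colon C\ltimes(\ker p)\to C$ is the identity on objects and sends $(f,m)$ to $f$, while $p\circ F$ sends $c$ to $ps(c)$ and $(f,m)$ to $ps(f)$ (using $m\in\ker p$), so the components $U_c\colon ps(c)\to c$ assemble, by naturality of $U$, into a natural isomorphism $p\circ F\Rightarrow pr$ compatibly with all structure. There is no genuine obstacle in this argument; the only point demanding care is to keep track of the natural isomorphism $U$ rather than pretending $p\circ s$ is literally the identity, both when constructing $\bar p$ and when comparing $p\circ F$ with $pr$. (The square-zero hypothesis on $\ker p$ is what made $F$ a well-defined functor in the first place, established before the statement, and so plays no further role here.)
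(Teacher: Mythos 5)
Your proof is correct and follows essentially the same route as the paper: essential surjectivity comes directly from that of $s$, and full faithfulness is established by exhibiting the inverse $g\mapsto\bigl(U_{c'}\circ p(g)\circ U_c^{-1},\,g-s(U_{c'}\circ p(g)\circ U_c^{-1})\bigr)$ on hom-groups, which is exactly the paper's formula. Your additional verifications (that $g-s(\bar p(g))$ lands in $(\ker p)(c,c')$, and that $U$ makes $F$ an equivalence over $C$) are details the paper leaves implicit, and they check out.
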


\begin{proof}
The argument is analogous to the classification of split square zero extensions of rings. The functor $F$ is obviously essentially surjective, since $s$ is by assumption. To see that it is fully faithful, define an inverse for
\[F\colon C(c,d)\times (\ker p)(c,d)\longrightarrow B(s(c),s(d))\]
by sending $f\colon s(c)\rightarrow s(d)$ to the pair $\big(U_d\circ p(f)\circ U^{-1}_c,f-s(U_d\circ p(f)\circ U_{c}^{-1})\big)$ where $U_c\colon ps(c)\rightarrow c$ is the natural isomorphism.
\end{proof}

\begin{ex}
Let $M$ be a bimodule over a ring $A$, and $p\colon A\ltimes M\to A$ the projection with zero section $s\colon A\to A\ltimes M$. These ring maps induce functors 
\[p=(-)\otimes_{A\ltimes M}A\colon \mathcal{P}_{A\ltimes M}\to \mathcal{P}_{A} \ \ \ \ \ \mbox{and}\ \ \ \ \ \  s=(-)\otimes_{A}(A\ltimes M)\colon \mathcal{P}_{A}\rightarrow \mathcal{P}_{A\ltimes M}\]
and a natural isomorphism $U\colon P\otimes_{A}(A\ltimes M)\otimes_{A\ltimes M}A\rightarrow P$.
In \cite[1.2.5.1]{DGM} the authors show that the bimodule $\ker p\colon\mathcal{P}_{A}^{op}\otimes \mathcal{P}_{A}\to Ab$ is canonically isomorphic to the bimodule $H^M=\hom_A(-,-\otimes_AM)$, and that the section functor $s$ is essentially surjective. Hence the lemma above describes an equivalence of categories $\mathcal{P}_A\ltimes H^M\stackrel{\simeq}{\longrightarrow}\mathcal{P}_{A\ltimes M}$.
\end{ex}

The next proposition extends this equivalence to the $S^{2,1}_{\sbt}$-construction.

\begin{prop}\label{splitPA} For every $n\geq 0$, the functor
\[F\colon i\big((S^{2,1}_{\sbt})^{(n)}\mathcal{D}\mathcal{P}_A\big)\ltimes
\big(\mathcal{D}(H^M)^{(n)}_{\sbt}\big)\stackrel{}{\longrightarrow} i(S^{2,1}_{\sbt})^{(n)}\mathcal{D}\mathcal{P}_{A\ltimes M}\]
induced by the split functor $(S^{2,1}_{\sbt})^{(n)}p\colon i(S^{2,1}_{\sbt})^{(n)}\mathcal{D}\mathcal{P}_{A\ltimes M}\rightarrow i(S^{2,1}_{\sbt})^{(n)}\mathcal{D}\mathcal{P}_{A}$ commutes with the dualities and it is levelwise an equivalence of categories. In particular it induces a $\mathbb{Z}/2$-equivalence on geometric realizations.
\end{prop}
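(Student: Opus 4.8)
The plan is to prove Proposition~\ref{splitPA} by a direct application of Lemma~\ref{classsplit}, levelwise in the $n$-simplicial direction, combined with an identification of the relevant kernel bimodule with $\mathcal{D}(H^M)$. First I would fix a multi-degree $\underline{p}\in\mathbb{N}^n$ and consider the split pair of exact functors
\[(S^{2,1}_{\sbt})^{(n)}_{\underline p}p\colon (S^{2,1}_{\sbt})^{(n)}_{\underline p}\mathcal{D}\mathcal{P}_{A\ltimes M}\longrightarrow (S^{2,1}_{\sbt})^{(n)}_{\underline p}\mathcal{D}\mathcal{P}_{A},\qquad (S^{2,1}_{\sbt})^{(n)}_{\underline p}s\]
induced by $p\colon A\ltimes M\to A$ and its zero section, together with the natural isomorphism $U$ induced from the ring-level natural isomorphism. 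I would then check the two hypotheses of Lemma~\ref{classsplit}: that the section is essentially surjective, and that products of two morphisms in the kernel bimodule vanish. Both reduce, degreewise in the $S^{2,1}_{\sbt}$-directions, to the corresponding facts for $\mathcal{P}_A$ and $\mathcal{P}_{A\ltimes M}$ established in \cite[1.2.5.1]{DGM}: the $(S^{2,1}_{\sbt})^{(n)}$-construction is built pointwise from functor categories $Cat([2],[p_i])\to C$ satisfying exactness conditions, so both conditions are inherited entrywise (a functor $X\colon Cat([2],\underline p)\to \mathcal{D}\mathcal{P}_A$ lifts because each value does, and the exactness conditions are preserved since $s$ is exact; the kernel of a natural transformation is computed pointwise, and the product of two pointwise-kernel morphisms is pointwise zero). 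Passing to the subcategories of isomorphisms $i(-)$ is harmless since $F$ is an equivalence of categories and hence restricts to an equivalence on maximal subgroupoids.

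Next I would identify the kernel bimodule. By \cite[1.2.5.1]{DGM} the kernel of $p$ on $\mathcal{P}_A$ is canonically $H^M$; applied pointwise in the $Cat([2],\underline p)$-variables and then through the strictification $\mathcal{D}$, this gives a canonical isomorphism $\ker\big((S^{2,1}_{\sbt})^{(n)}p\big)\cong \mathcal{D}(H^M)^{(n)}_{\sbt}$ of bimodules, where the right-hand side is the extension of $\mathcal{D}H^M$ to the $S^{2,1}_{\sbt}$-construction from Example~\ref{coeffsystbimod}. Feeding this into Lemma~\ref{classsplit} yields the claimed functor $F$ and shows it is levelwise an equivalence of categories over $(S^{2,1}_{\sbt})^{(n)}\mathcal{D}\mathcal{P}_A$.

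It then remains to check compatibility with the dualities. The duality on $C\ltimes M$ is $D\ltimes J$ by Definition~\ref{semi}, and on the strictified categories $\mathcal{D}\mathcal{P}_{A\ltimes M}$ the duality comes from the Wall antistructure $(A\ltimes M, w\ltimes h,(\epsilon,0))$ via $\hom_{A\ltimes M}(-,(A\ltimes M)_s)$. I would verify that under the canonical isomorphism $\ker p\cong H^M$ the induced strict duality on $\ker p$ matches $\mathcal{D}J$, where $J$ is the duality on $H^M$ spelled out via $\widehat J$ earlier in the section; this is a direct computation unwinding the definitions of $w\ltimes h$, of $(A\ltimes M)_s$, and of the map $(w(-)\cdot\epsilon)\otimes h$ appearing in $\widehat J$ --- the involution $h$ on $M$ is exactly what the $M_w$-summand of $\hom_{A\ltimes M}(-,(A\ltimes M)_s)$ contributes. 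Granting this, $F$ intertwines $D\ltimes(\mathcal{D}J)^{(n)}_{\sbt}$ with the strict duality on $(S^{2,1}_{\sbt})^{(n)}\mathcal{D}\mathcal{P}_{A\ltimes M}$, compatibly with the $\Sigma_n$-permutation structure since all constructions are natural in the $S^{2,1}_{\sbt}$-coordinates. Finally, a levelwise equivalence of categories equivariant for the Real structure induces a levelwise $\mathbb{Z}/2$-equivalence of nerves, hence (taking diagonal realizations, which preserves $\mathbb{Z}/2$-equivalences since realization commutes with fixed points on the edgewise subdivision, cf.\ Remark~\ref{sub}) a $\mathbb{Z}/2$-equivalence on geometric realizations. \textbf{The main obstacle} I anticipate is the duality-matching computation: one must carefully track how the off-diagonal involution $h\colon M\to M$ of the Wall antistructure on $A\ltimes M$ translates, through two strictifications ($\mathcal{D}$ on categories and on bimodules) and through the Morita-type identification $\ker p\cong H^M$, into exactly the natural transformation $\widehat J$ built from $(w(-)\cdot\epsilon)\otimes h$, and confirm that the coherence diagram of Definition~\ref{defbimoddual} (equivalently $J^2=\id$ in the strict case) is respected; everything else is a routine degreewise bootstrap from the non-equivariant statements of \cite{DGM}.
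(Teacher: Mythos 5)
There is a genuine gap at the step where you check essential surjectivity of the section. You assert that a diagram $X\colon Cat([2],\underline p)\to \mathcal{D}\mathcal{P}_{A\ltimes M}$ ``lifts because each value does.'' This entrywise reduction does not work: the section $s=(-)\otimes_A(A\ltimes M)$ is not full, so if you choose isomorphisms $X_\theta\cong s(c_\theta)$ independently for each $\theta$, the transported structure maps $s(c_\theta)\to s(c_{\theta'})$ need not lie in the image of $s$ on morphisms, and the chosen objects $c_\theta$ need not assemble into a diagram in $S^{2,1}_{\underline p}\mathcal{D}\mathcal{P}_A$ whose image under $S^{2,1}_{\underline p}s$ is isomorphic to $X$. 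The paper's proof handles this by exploiting that $(S^{2,1}_{\sbt})^{(n)}\mathcal{P}_{A\ltimes M}$ is split-exact: one first replaces $X$ by an isomorphic diagram $Y$ whose vertices are direct sums of the ``diagonal'' kernels $\ker\big(X_{i-1<i+j-1<i+j}\to X_{i\leq i+j-1<i+j}\big)$ and whose structure maps are just inclusions and projections of summands; only then can one lift the summands arbitrarily and reassemble, because the maps of $Y$ are canonical once the summands are chosen. Without this decomposition the induction on $n$ does not close, since the general statement being proved is that $S^{2,1}_{\sbt}s$ is essentially surjective whenever $s$ is essentially surjective, exact, and the target is split-exact.

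A second, smaller gap: you assert that a levelwise equivalence of categories commuting strictly with the dualities automatically induces a $\mathbb{Z}/2$-equivalence on realizations. This is not formal, because a quasi-inverse of such a functor does not in general commute with the dualities, so one cannot directly produce a homotopy inverse on fixed points. The paper isolates this as Lemma \ref{eqeqofcat}, whose proof passes through the strictification $\mathcal{D}(-)$ and identifies the fixed points of the realization with the realization of the category $\sym A$ of self-dual isomorphisms via edgewise subdivision. You should either invoke that lemma or reproduce its argument. By contrast, the duality-matching computation you flag as the main obstacle is the easy part: the paper disposes of it with the general observation that if $p$ and $s$ commute with the dualities then so does the induced functor $F\colon C\ltimes(\ker p)\to B$.
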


\begin{proof}
Generally, if $B$ and $C$ carry dualities and $p$ and $s$ commute with the dualities, so does $F\colon C\ltimes (\ker p)\to B$.

We prove that $F$ is a levelwise equivalence of categories. The extension of the bimodule $H^M$ to $(S_{\sbt}^{2,1})^{(n)}\mathcal{P}_A$ defined in \ref{coeffsystbimod} sends a pair of diagrams $X$ and $Y$ in $(S_{\underline{p}}^{2,1})^{(n)}\mathcal{P}_A$ to the Abelian group of natural transformations
\[(H^M)^{(n)}_{\underline{p}}=\hom_A(X,Y\otimes_AM)\]
where the tensor product and the sum of maps of diagrams are taken objectwise.
The identification between $\ker p$ and $M^H$ of \cite[1.2.5.1]{DGM} extends to an isomorphism $\ker (S^{2,1}_{\sbt})^{(n)}p\cong \mathcal{D}(H^M)^{(n)}_{\sbt}$. 
By Lemma \ref{classsplit} it is enough to show that the section $ i(S^{2,1}_{\sbt})^{(n)}\mathcal{D}\mathcal{P}_{A}\to i(S^{2,1}_{\sbt})^{(n)}\mathcal{D}\mathcal{P}_{A\ltimes M}$ is essentially surjective.  As $\mathcal{D}\mathcal{P}_{A}$ and $\mathcal{P}_{A}$ are naturally equivalent categories, it suffices to show that the section $i(S^{2,1}_{\sbt})^{(n)}\mathcal{P}_{A}\to i(S^{2,1}_{\sbt})^{(n)}\mathcal{P}_{A\ltimes M}$ is essentially surjective. For $n=0$ this is the section 
\[s=(-)\otimes_{A}(A\ltimes M)\colon \mathcal{P}_A\longrightarrow \mathcal{P}_{A\ltimes M}\]
which is essentially surjective by \cite[1.2.5.4]{DGM}. For $n\geq 1$ we show more generally that if $s\colon C\to B$ is an essentially surjective exact functor, and $B$ is a split-exact category, then $S^{2,1}_{\sbt} s\colon S^{2,1}_{\sbt}C\to S^{2,1}_{\sbt}B$ is also essentially surjective. The result follows by induction on $n$ as $(S^{2,1}_{\sbt})^{(n)}\mathcal{P}_{A\ltimes M}$ is split-exact.
Since $B$ is split exact, a diagram $X$ in $S^{2,1}_pB$ is (non-canonically) isomorphic to the diagram $Y$ in $S^{2,1}_pB$ with vertices
\[Y_\theta=\bigoplus_{\rho=(0^i1^j2^{p-i-j+1})\in r(\theta)}\ker\big(X_{i-1<i+j-1<i+j}\longrightarrow X_{i\leq i+j-1<i+j}\big)\]
Here $r(\theta)$ is the set of retractions for the map $\theta\colon [2]\longrightarrow [p]$, and the maps of the diagram $Y$ are inclusions and projections of the direct summands. This splitting is the $S^{2,1}_p$-analogue of the result that the objects of a diagram in $S_pB$ decompose as direct sums of the diagonal objects. We refer to \cite{IbLars} for a proof.
By the above splitting, it is enough to find a diagram in $S^{2,1}_kC$ whose image by $s$ is isomorphic to $Y$.
We denote the kernel in the above splitting corresponding to a retraction $\rho$ by $b_\rho$. As $s$ is essentially surjective one can choose isomorphisms $\epsilon_\rho\colon b_\rho{\rightarrow} s(c_\rho)$
for some objects $c_\rho$ in $C$. Since the maps in the diagram $Y$ are all projections and inclusions, these arbitrary choices of isomorphisms fit together into an isomorphism of diagrams $Y\cong \bigoplus_{\rho\in r(\theta)}s(c_\rho)$.
Define $Z$ in $S^{2,1}_kC$ to have vertices $Z_\theta=\bigoplus_{\rho\in r(\theta)}c_\rho$
and projections and inclusions as maps.
Since $s$ is exact it is in particular additive, and there are isomorphisms $s(Z)\cong \bigoplus_{\rho\in r(\theta)}s(c_\rho)\cong Y\cong X$.

We are left with showing that $F$ induces a $\mathbb{Z}/2$-equivalence on realizations. This is a general property of equivalences of categories, proved in Lemma \ref{eqeqofcat} below.
\end{proof}

\begin{lemma}\label{eqeqofcat}
Let $A$ and $B$ be categories with strict duality, and let $F\colon A\to B$ be a fully faithful and essentially surjective functor which commutes strictly with the dualities. Then $F$ induces a $\mathbb{Z}/2$-equivalence on geometric realizations.
\end{lemma}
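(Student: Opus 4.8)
The plan is to build an equivariant homotopy inverse to $|NF|$ by hand, using a choice of quasi-inverse to $F$ that is compatible with the dualities. First I would record the elementary-but-useful fact that, since $F\colon A\to B$ is fully faithful and commutes strictly with the strict dualities, the nerve $NF\colon NA\to NB$ is a map of Real simplicial sets (i.e.\ $\mathbb{Z}/2$-simplicial sets), so $|NF|$ is a genuine $\mathbb{Z}/2$-equivariant map. By Remark \ref{sub} it is enough to produce an equivariant homotopy inverse on the edgewise subdivisions, where the $\mathbb{Z}/2$-action is simplicial; in fact I would instead work directly with the fixed-point diagrams. By the usual criterion for $\mathbb{Z}/2$-equivariant homotopy equivalences, it suffices to show that $|NF|$ and $|NF|^{\mathbb{Z}/2}$ are both (ordinary) homotopy equivalences.

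For the underlying map $|NF|\colon |NA|\to |NB|$ this is classical: a fully faithful essentially surjective functor is an equivalence of categories, and equivalences of categories induce homotopy equivalences on nerves (choose a quasi-inverse $G$ and natural isomorphisms $GF\cong \id$, $FG\cong\id$, which give simplicial homotopies after realization). The content is in the fixed points. Here I would use the equivalent category with strict duality in the spirit of $\mathcal{D}(-)$: the fixed category $A^{\mathbb{Z}/2}$ relevant for $|NA|^{\mathbb{Z}/2}$ (via the subdivision) is the category of ``symmetric objects'' of $A$, whose objects are objects $a$ together with an isomorphism $a\xrightarrow{\cong} Da$ satisfying the usual coherence, with morphisms the $D$-fixed isomorphisms; more precisely $|NA|^{\mathbb{Z}/2}\simeq |N(\her A)|$ where $\her A$ is the category of symmetric forms and their isometries, because the fixed points of the subdivided nerve compute exactly this. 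Since $F$ is strictly compatible with the dualities, it induces a functor $\her F\colon \her A\to \her B$.

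The key step is then to check that $\her F$ is again fully faithful and essentially surjective. Full faithfulness is immediate: a morphism in $\her B$ between images of symmetric objects is in particular a morphism of $B$ between objects in the image of $F$, hence uniquely lifts along the fully faithful $F$, and one checks the lift is automatically $D$-equivariant because $F$ reflects the duality structure strictly. Essential surjectivity is the only real obstacle: given a symmetric object $(b,\phi\colon b\xrightarrow{\cong} Db)$ in $B$, pick (by essential surjectivity of $F$) an object $a$ with an isomorphism $u\colon Fa\xrightarrow{\cong} b$; transport $\phi$ to an isomorphism $Fa\xrightarrow{\cong} D(Fa)=F(Da)$, and use full faithfulness of $F$ to pull it back to an isomorphism $\psi\colon a\xrightarrow{\cong} Da$ in $A$. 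One then has to arrange that $\psi$ is \emph{symmetric}, i.e.\ $D(\psi)\circ\eta_a=\psi$ (here $\eta=\id$ since the duality is strict, so the condition is $D(\psi)=\psi$); this may fail for the naive choice, but it can be corrected by the standard averaging/half-unit trick or, failing that, by replacing $(b,\phi)$ up to isometry — this is exactly the classical statement that a form on $b$ pulls back to a form on $a$ along an equivalence respecting duality, and I expect the cleanest route is to invoke the strictification: $F$ factors through $\mathcal{D}A\to\mathcal{D}B$ where the symmetric-object categories are literally the $\mathbb{Z}/2$-fixed subcategories, and essential surjectivity there is a direct diagram chase with the pairs $(c,d,\phi)$. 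Once $\her F$ is an equivalence of categories, $|N\her F|=|NF|^{\mathbb{Z}/2}$ is a homotopy equivalence, and combined with the underlying statement we conclude that $|NF|$ is a $\mathbb{Z}/2$-equivalence. The main obstacle, to repeat, is the bookkeeping showing that the quasi-inverse of $F$ can be chosen compatibly with the symmetric structures on fixed points; everything else is formal.
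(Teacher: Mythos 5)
Your proposal is correct, and it takes a more direct route than the paper. The paper does not argue fixed-point-wise on $F$ itself: it first builds an explicit $\mathbb{Z}/2$-homotopy inverse by choosing a quasi-inverse $F'$ together with the natural isomorphism $\xi\colon F'D_B\Rightarrow D_AF'$ measuring its failure to commute with the dualities, packages $(F',\xi)$ into a strictly duality-preserving inverse $\mathcal{D}(F',\xi)$ of $\mathcal{D}F\colon\mathcal{D}A\to\mathcal{D}B$, and then separately proves that $|A|\to|\mathcal{D}A|$ is a $\mathbb{Z}/2$-equivalence --- and it is only in that last step that the paper passes to fixed points via edgewise subdivision and the category $\sym A$ of self-dual morphisms, exhibiting explicit mutually inverse functors $\sym\mathcal{D}A\rightleftarrows\sym A$. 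You instead use the definition of $\mathbb{Z}/2$-equivalence directly (weak equivalence on underlying spaces and on fixed points), identify $|NA|^{\mathbb{Z}/2}$ with $|N(\sym A)|$ via the same subdivision, and check that $\sym F$ is an equivalence of categories. Your route is shorter and avoids $\mathcal{D}$ entirely; what the paper's route buys is an actual equivariant homotopy inverse (useful elsewhere) rather than just a weak equivalence, though for nerves of small categories the two are interchangeable.

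The one place where you hedge unnecessarily is essential surjectivity of $\sym F$. The transported form is \emph{automatically} symmetric: with $u\colon Fa\xrightarrow{\cong}b$ and $\phi\colon b\to D_Bb$ satisfying $D_B(\phi)=\phi$, the conjugate $\psi'=D_B(u)\circ\phi\circ u$ satisfies
\[
D_B(\psi')=D_B(u)\circ D_B(\phi)\circ D_B^2(u)=D_B(u)\circ\phi\circ u=\psi',
\]
using strictness of the duality ($D_B^2=\id$ on morphisms); and since $F$ is faithful and strictly duality-preserving, the unique lift $\psi$ of $\psi'$ along $F$ satisfies $D_A(\psi)=\psi$ as well. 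So no correction of the naive choice is needed. Indeed, the ``averaging/half-unit trick'' you mention as a first fallback would be the wrong tool here, since $2$ need not be invertible in this generality; fortunately it is never required. With that observation your argument closes completely. (A cosmetic point: the fixed points of the subdivided nerve are the nerve of the category of all self-dual \emph{morphisms} $a\to Da$, not only isomorphisms; this changes nothing in the argument, and in the paper's applications $A$ is a groupoid anyway.)
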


\begin{proof} 
Let us denote by $D_A$ and $D_B$ the respective dualities on $A$ and $B$.
A choice of objects $a_b$ in $A$ and of isomorphisms $\epsilon_b\colon F(a_b)\to b$, for every object $b$ in $B$, gives an inverse functor $F'\colon B\to A$. It is defined on objects by $F'(b)=a_b$, and on morphisms by
\[F'\big(b\stackrel{f}{\longrightarrow} b'\big)=F^{-1}\big(F(a_b)\stackrel{\epsilon_b}{\longrightarrow}b\stackrel{f}{\longrightarrow} b'\stackrel{\epsilon_{b'}^{-1}}{\longrightarrow} F(a_b')\big)\]
The functor $F'$ commutes with the dualities up to a natural isomorphism $\xi\colon F'D_B\Rightarrow D_AF'$ defined by
\vspace{-.7cm}

\[\xi_b=F^{-1}\big(FF'D_B(b)=F(a_{D_Bb})\stackrel{\epsilon_{D_Bb}}{\xrightarrow{\hspace*{.7cm}}}D_Bb\stackrel{D_B(\epsilon_{b})}{\xrightarrow{\hspace*{.7cm}}}D_BF(a_b)=FD_A(a_b)=FD_AF'(b)\big)\]
The pair $(F',\xi)$ induces an inverse $\mathcal{D}(F',\xi)$ for the functor $\mathcal{D}F\colon \mathcal{D}A\to\mathcal{D}B$ that commutes strictly with the dualities. It sends an object $\big(b,d,\phi\colon d\stackrel{\cong}{\to} D_B(b)\big)$ of $\mathcal{D}B$ to
\[\big(F'(b),F'(d), F'(d)\stackrel{F'(\phi)}{\longrightarrow}F'D_B(b)\stackrel{\xi_b}{\longrightarrow}D_AF'(b)\big)\]
and a morphism $(f,g)$ in $\mathcal{D}B$ to $(F'(f),F'(g))$.
This induces a $\mathbb{Z}/2$-homotopy inverse for $\mathcal{D}F$ on geometric realizations. There is a natural equivalence of categories $A\to \mathcal{D}A$ that sends $a$ to $(D_Aa,a,\id_{D_Aa})$, giving a commutative square of $\mathbb{Z}/2$-spaces
\[\xymatrix{|A|\ar[d]\ar[r]^-F&|B|\ar[d]\\
|\mathcal{D}A|\ar@<1ex>[r]^-{\mathcal{D}F}_-\simeq&|\mathcal{D}B|\ar@<1ex>[l]^-{\mathcal{D}(F',\xi)}
}\]
It remains to show that the vertical maps are $\mathbb{Z}/2$-equivalences. We show this for $A$, and we drop the subscript $A$ from the strict duality $D:=D_A$. Since $A\to \mathcal{D}A$ is an equivalence of categories it induces a non-equivariant equivalence on realizations. Therefore we need to show that $|A|^{\mathbb{Z}/2}\to |\mathcal{D}A|^{\mathbb{Z}/2}$ is an equivalence. Taking the edgewise subdivision of the nerve of $A$, one can see that the $\mathbb{Z}/2$-fixed points of $|A|$ are naturally equivalent to the geometric realization of the category $\sym A$, with objects self-dual isomorphisms
\[\Ob \sym A=\left\{\big(a\in A, k\colon a\stackrel{k}{\to}Da\big)\ | \ D(k)=k\right\}\]
and morphisms $(a,k)\to (a',k')$ maps $f\colon a\to a'$ in $A$ which satisfy $k=D(f)k'f$. It remains to show that $\sym A$ is equivalent to $\sym \mathcal{D}A$. There are mutually inverses equivalences of categories $p\colon \sym \mathcal{D}A\to \sym A$  and $s\colon \sym A\to \sym \mathcal{D}A$ defined on objects by
\[\vcenter{\hbox{\xymatrix@=17pt{d\ar[d]_{\phi}^{\cong}&c\ar@{-->}[dl]\ar[d]_{\cong}^{D\phi}\ar[l]_{k}\\
Dc&Dd\ar[l]^{D(k)}
}}}\ \ \stackrel{p}{\longmapsto} \ \ \ 
(c,\phi\circ k)\ \ \ \ \ \ \ \ \ \ \mbox{and} \ \ \ \ \ \ \ \ \ (a,k\colon a\to Da)\ \ \stackrel{s}{\longmapsto} \ \ \ \vcenter{\hbox{\xymatrix@=17pt{Da\ar@{=}[d]&a\ar@{=}[d]\ar[l]_{k}\\
Da&DDa\ar[l]^{D(k)}
}}}\]
and on morphisms by $p(f\colon c\to c',g\colon d'\to d)=f$ and $s(f)=(f,D(f))$.
\end{proof}

Given an exact category $C$ and an additive functor $M\colon C^{op}\otimes C\rightarrow Ab$ let $\coprod_CM$ be the groupoid defined as the disjoint union of  the groups $M(c,c)$. Its objects are the objects of $C$, and it has only endomorphisms, defined by
\[(\coprod_CM)(c,c)=M(c,c)\]
The composition of endomorphisms is the addition in the Abelian groups $M(c,c)$.
Strict dualities $D$ on $C$ and $J$ on $M$ induce a strict duality on $\coprod_CM$ defined by $D$ on objects and by $J\colon M(c,c)\rightarrow M(Dc,Dc)$ on morphisms. There is an embedding $e\colon \coprod_CM\to i(C\ltimes M)$ which is the identity on objects, and that sends a morphism $m$ in $M(c,c)$ to the morphism $(\id_c,m)$. Here $i(C\ltimes M)$ denotes the subcategory of isomorphisms of $C\ltimes M$.

\begin{prop}\label{swallow}
For every integer $n\geq 1$, the embedding
\[e\colon \coprod_{(S^{2,1}_{\sbt})^{(n)}\mathcal{D}C}\mathcal{D}M_{\sbt}^{(n)}{\longrightarrow} i\big((S^{2,1}_{\sbt})^{(n)}\mathcal{D}C\big)\ltimes
\mathcal{D}M^{(n)}_{\sbt}\]
induces a weak $\mathbb{Z}/2$-equivalence on geometric realizations.
\end{prop}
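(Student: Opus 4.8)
Write $\mathcal{C}=(S^{2,1}_{\sbt})^{(n)}\mathcal{D}C$ and $M=\mathcal{D}M^{(n)}_{\sbt}$. The plan is to exhibit $e$ as a base change of a fibration and thereby reduce to the case $M=0$. First I would record that the three categories $\coprod_{\mathcal{C}}M$, $i\mathcal{C}\ltimes M$ and $i\mathcal{C}$ all carry strict dualities ($D$ on objects, $J$ on the $M$-morphisms) and that $e$, being the identity on objects and $m\mapsto(\id,m)$ on morphisms, strictly commutes with them; hence $|Ne|$ is a $\mathbb{Z}/2$-equivariant map of $\mathbb{Z}/2$-spaces, and it is a weak $\mathbb{Z}/2$-equivalence precisely when it is a weak equivalence underlying and on $\mathbb{Z}/2$-fixed points. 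For the fixed points I would pass through Segal's edgewise subdivision as in the proof of Lemma \ref{eqeqofcat}: subdividing all simplicial directions identifies the $\mathbb{Z}/2$-fixed points of the realization of the nerve of a category with strict duality with the realization of the nerve of the associated category $\sym(-)$ of self-dual objects and isometries — for the groupoids at hand every self-dual morphism is automatically invertible — so that on fixed points $Ne$ becomes $N\sym(e)\colon N\sym(\coprod_{\mathcal{C}}M)\to N\sym(i\mathcal{C}\ltimes M)$.

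Next I would use that the forgetful functor $i\mathcal{C}\ltimes M\to i\mathcal{C}$ is a duality-preserving fibration of groupoids (levelwise in the $n$ simplicial directions), since a morphism $f$ of $i\mathcal{C}$ is lifted by $(f,0)$; its nerve is therefore a levelwise Kan fibration whose fibre over an object $c$ is the one-object groupoid $M(c,c)$. Pulling this fibration back along the inclusion of objects $\Ob\mathcal{C}\hookrightarrow Ni\mathcal{C}$ (as the totally degenerate simplices) returns exactly $N(\coprod_{\mathcal{C}}M)$, with the induced map on total spaces equal to $Ne$; that is, the square
\[\xymatrix{N(\coprod_{\mathcal{C}}M)\ar[r]^-{Ne}\ar[d]&N(i\mathcal{C}\ltimes M)\ar[d]\\\Ob\mathcal{C}\ar[r]&Ni\mathcal{C}}\]
is a pullback along a levelwise Kan fibration, hence a levelwise homotopy pullback, which — geometric realization preserving homotopy pullbacks along levelwise Kan fibrations — realizes to a homotopy pullback. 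The same holds after applying $\sym$, which carries pullbacks to pullbacks and carries $i\mathcal{C}\ltimes M\to i\mathcal{C}$ to the fibration of groupoids $\sym(i\mathcal{C}\ltimes M)\to\sym(i\mathcal{C})$ with fibre over $(c,k)$ the affine space of self-dual $m\in M(c,Dc)$ compatible with $k$. Thus $e$ (resp.\ $N\sym(e)$) is a weak equivalence after realization as soon as the bottom map $\Ob\mathcal{C}\to Ni\mathcal{C}$ (resp.\ $\sym(\Ob\mathcal{C})\to N\sym(i\mathcal{C})$) is one.

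Taking $M=0$ collapses $e$ to precisely this bottom map ($\coprod_{\mathcal{C}}0=\Ob\mathcal{C}$ and $i\mathcal{C}\ltimes 0=i\mathcal{C}$) — which is also the right-hand map of the zig-zag preceding Definition \ref{semi} — so it suffices to settle the case $M=0$, namely that for $n\geq 1$ the inclusion of objects into the nerve of isomorphisms $|\Ob(S^{2,1}_{\sbt})^{(n)}\mathcal{D}C|\to|Ni(S^{2,1}_{\sbt})^{(n)}\mathcal{D}C|$ is a $\mathbb{Z}/2$-equivalence. This is the Real, iterated analogue of the classical fact that objects and isomorphisms give the same realization for the (nonzero) $S_{\sbt}$-construction. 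Writing $\mathcal{C}=S^{2,1}_{\sbt}\mathcal{C}'$ with $\mathcal{C}'=(S^{2,1}_{\sbt})^{(n-1)}\mathcal{D}C$ and realizing the outer $S^{2,1}_{\sbt}$-direction first, one straightens a $q$-chain of isomorphisms in $S^{2,1}_p\mathcal{C}'$ onto the constant chain at its initial vertex by the natural transformation assembled from the iterated composites of the chain; the direct-sum decomposition of $S^{2,1}_p$-diagrams into diagonal pieces used in the proof of \ref{splitPA} makes this straightening coherent in $p$, so that the simplicial space $q\mapsto|N_q iS^{2,1}_{\sbt}\mathcal{C}'|$ has all structure maps weak equivalences and hence realizes to its zeroth row $|\Ob(S^{2,1}_{\sbt})^{(n)}\mathcal{D}C|$.

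On $\mathbb{Z}/2$-fixed points the same straightening runs inside $\sym$, over chains of self-dual isomorphisms, and is duality-equivariant because composites of self-dual isomorphisms are again self-dual, so the argument survives subdivision. Making this contraction simultaneously coherent in the nerve direction, in the $n$ simplicial directions of the iterated $S^{2,1}_{\sbt}$-construction, and compatible with the duality — the equivariant version of McCarthy's swallowing argument from \cite{McCarthy} — is the step I expect to be the main obstacle; the rest of the proof is the formal fibration–pullback bookkeeping above together with the identification of $\mathbb{Z}/2$-fixed points via edgewise subdivision already isolated in Lemma \ref{eqeqofcat}.
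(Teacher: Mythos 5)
Your reduction to $M=0$ is an attractive idea, but it does not close the argument, and the part you defer is in fact the entire content of the paper's proof. Two concrete problems. First, the ``formal fibration--pullback bookkeeping'' is not formal: the square you write is a strict pullback of $(n+1)$-simplicial sets which is a homotopy pullback \emph{levelwise in the $S^{2,1}$-multidegree $\underline{p}$}, but to conclude that it realizes to a homotopy pullback you need a Bousfield--Friedlander type statement, hence the $\pi_\ast$-Kan condition; the base $N_\sbt i(S^{2,1}_{\underline{p}})^{(n)}\mathcal{D}C$ is not levelwise connected and is not a simplicial group, so this is a genuine verification you have only asserted. Second, and more seriously, the $M=0$ case ($\Ob\to Ni$ is a $\mathbb{Z}/2$-equivalence) is not an easier input you can quote --- in the paper it is Remark \ref{Obincl}, recorded as a \emph{special case} of Proposition \ref{swallow} --- and your sketch of it is wrong where it matters: straightening a chain of isomorphisms onto its \emph{initial} vertex is not compatible with the duality, since the duality reverses the order of the chain. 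This is exactly why the paper works with the edgewise subdivision, where the $k$-simplices of the subdivided nerve are $(2k+1)$-chains with a well-defined middle, and defines the retraction $r$ by contracting onto the \emph{middle} square; only then do $r$ and the homotopy commute strictly with the dualities. Your appeal to ``composites of self-dual isomorphisms are again self-dual'' does not repair this.

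On the coherence issues you flag as the ``main obstacle'': the paper does \emph{not} require the homotopy inverse to be simplicial in the nerve degree $k$ --- a degreewise $\mathbb{Z}/2$-equivalence of the subdivided multisimplicial sets suffices to realize to a $\mathbb{Z}/2$-equivalence --- so coherence in the nerve direction is not needed. What is needed is simpliciality in the $S^{2,1}$-direction $p$, and this is achieved not by the direct-sum splitting of diagrams (that splitting is used in \ref{splitPA} for essential surjectivity, a different purpose) but by an explicit functorial homotopy $K_p$ built from the evaluation $ev_1\colon Cat([2],[p])\to[p]$ and a duality-respecting natural isomorphism $U\colon\id\Rightarrow\lambda$, in the style of \cite[1.2.3.2]{DGM}, together with explicit push-pull formulas transporting the bimodule labels along the chain. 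Since you have neither justified the realization of the homotopy pullback nor supplied the equivariant swallowing homotopy (and the version you gesture at is not equivariant), the proposal has a genuine gap.
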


\begin{rem}\label{Obincl}
For the trivial bimodule $M=0$, Proposition \ref{swallow} states that the inclusion $\Ob S^{2,1}_{\sbt}\mathcal{D}C\rightarrow iS^{2,1}_{\sbt}\mathcal{D}C$ induces a $\mathbb{Z}/2$-equivalence on geometric realizations. The proof of Proposition \ref{swallow} uses a swallowing argument completely analogous to the argument of \cite[1.4-Cor(2)]{Wald}, which shows that the inclusion as a discrete simplicial category  $\Ob S_{\sbt}C\rightarrow iS_{\sbt}C$ induces an equivalence of classifying spaces.
\end{rem}

\begin{proof}[Proof of \ref{swallow}]
By induction, it is enough to prove the proposition for $n=1$.
We are going to show that for every fixed integer $k$ the map 
\[N_{2k+1}e\colon N_{2k+1}\coprod_{S^{2,1}_{\sbt}\mathcal{D}C}\mathcal{D}M_{\sbt}\to N_{2k+1}(i\big(S^{2,1}_{\sbt}\mathcal{D}C\big)\ltimes
\mathcal{D}M_{\sbt})\]
admits a $\mathbb{Z}/2$-equivariant homotopy inverse (which is not simplicial in $k$). This will show that the edgewise subdivision of $N_{\sbt}e$  is a levelwise $\mathbb{Z}/2$-equivalence of $\mathbb{Z}/2$-bisimplicial sets, and hence it induces a $\mathbb{Z}/2$-equivalence on geometric realizations.

Since $\coprod_{S^{2,1}_{\sbt}\mathcal{D}C}\mathcal{D}M_{\sbt}$ is a disjoint union of groups, there is a natural isomorphism
\[N_{2k+1}\coprod_{S^{2,1}_{\sbt}\mathcal{D}C}\mathcal{D}M_{\sbt}\cong \coprod_{S^{2,1}_{\sbt}\mathcal{D}C}N_{2k+1}\mathcal{D}M_{\sbt}\cong
\coprod_{S^{2,1}_{\sbt}\mathcal{D}C}\mathcal{D}M^{\oplus (2k+1)}_{\sbt} \]
where the direct sums are taken objectwise.
An element of $N_{2k+1}(i\big(S^{2,1}_{p}\mathcal{D}C\big)\ltimes
\mathcal{D}M_{p})$ is a pair $(\underline{\phi},\underline{m})$ of a diagram of isomorphisms
\[\underline{\phi}=\left(\vcenter{\hbox{\xymatrix{Y_0\ar[d]_{\phi_0}&
Y_1\ar[d]_{\phi_1}\ar[l]_-{b_1}&
\ar[l]_-{b_2}\dots&Y_k\ar[d]_{\phi_k}\ar[l]_{b_k}&Y_{k+1}\ar@{-->}[dl]\ar[l]_{b_{k+1}}\ar[d]^{\phi_{k+1}}&
\dots\ar[l]_{b_{k+2}} &&Y_{2k+1}\ar[ll]_-{b_{2k+1}}\ar[d]^{\phi_{2k+1}}
\\
D(X_0)&D(X_1)\ar[l]^-{Da_1}&\ar[l]^-{Da_2}\dots&D(X_k)\ar[l]^-{Da_{k}}&
D(X_{k+1})\ar[l]^{Da_{k+1}}&\dots\ar[l]^-{Da_{k+2}}&&
D(X_{2k+1})\ar[ll]^-{Da_{2k+1}}}}}\right)\]
in $S^{2,1}_pC$, and a collection of elements $m_i$ in the Abelian groups $M_p(X_{i-1},X_i)$. 
We define a homotopy inverse $r\colon N_{2p+1}(i\big(S^{2,1}_{\sbt}\mathcal{D}C\big)\ltimes
\mathcal{D}M_{\sbt})\rightarrow \coprod_{S^{2,1}_{\sbt}\mathcal{D}C}\mathcal{D}M_{\sbt}^{\oplus(2k+1)}$ by contracting the bimodule components of $(\underline{\phi},\underline{m})$ onto the diagonal of the middle square in $\underline{\phi}$. Precisely, $r$ is defined by
\[r(\underline{\phi},\underline{m})=\big(Y_{k+1}\stackrel{b_{k+1}}{\longrightarrow}Y_k\stackrel{\phi_k}{\longrightarrow} D(X_k),r(\underline{m})\in M_{\sbt}(X_{k},X_{k})^{\oplus(2k+1)}\big)\]
where $r(\underline{m})$ has $i$-component 
\[r(\underline{m})_i=\left\{\begin{array}{ll}
(a^{-1}_{k})^{\ast}\dots(a^{-1}_i)^{\ast}(a_{k})_\ast \dots (a_{i+1})_\ast m_i
&,1\leq i\leq k\\
a_{k+1}^{\ast}\dots a_{i-1}^{\ast}(a^{-1}_{k+1})_\ast\dots (a^{-1}_{i})_\ast m_i
& ,k+1\leq i\leq 2k+1
\end{array}\right.\]
This defines a retraction of $N_{2k+1}e$. Moreover $r$ commutes strictly with the dualities, since we are contracting onto the middle square.
We need to define a simplicial homotopy
\[H\colon N_{2k+1}(i\big(S^{2,1}_{\sbt}\mathcal{D}C\big)\ltimes
\mathcal{D}M_{\sbt})\times\Delta[1]\longrightarrow N_{2k+1}(i\big(S^{2,1}_{\sbt}\mathcal{D}C\big)\ltimes
\mathcal{D}M_{\sbt})\]
between $(N_{2k+1}e)\circ r$ and the identity, which commutes with the dualities. Let us forget for a moment the bimodule component.
Consider $N_{2k+1}iS^{2,1}_{2}\mathcal{D}C=N_{2k+1}i\mathcal{D}C$
as a category with strict duality, with natural transformations of diagrams as morphisms.
The $\mathcal{D}C$ component of $(N_{2k+1} e_2)\circ r_2$ extends to a functor $\lambda\colon \mathcal{N}_{2k+1}i\mathcal{D}C\rightarrow \mathcal{N}_{2k+1}i\mathcal{D}C$ by sending all the morphisms to identities. There is a natural isomorphism $U\colon \id\Rightarrow\lambda$ defined at an object $\underline{\phi}$ by the diagram
\[\xymatrix@1@=6pt@C4pt@R35pt{
&Dc_0&&Dc_1\ar[ll]&&\dots\ar[ll]&&Dc_k\ar[ll]&&
Dc_{k+1}\ar[ll]&&
Dc_{k+2}\ar[ll]&&
\dots\ar[ll]&&
Dc_{2k+1}\ar[ll]\\
d_0\ar[ur]&&
d_1\ar[ur]\ar[ll]&&
\dots\ar[ll]&&
d_k\ar[ur]\ar[ll]&&
d_{k+1}\ar[ur]\ar[ll]&&
d_{k+2}\ar[ur]\ar[ll]&&
\dots\ar[ll]&&
d_{2k+1}\ar[ur]\ar[ll]\\
&Dc_k\ar[uu]|<<<<<<<<<{D(a_k\dots a_1)}&&
Dc_k\ar[uu]|<<<<<<<<<{D(a_k\dots a_2)}\ar@{=}[ll]&&
\dots\ar@{=}[ll]&&
Dc_k\ar@{=}[uu]\ar@{=}[ll]&&
Dc_{k}\ar[uu]|<<<<<<<<<{D(a_{k+1}^{-1})}\ar@{=}[ll]&&
Dc_{k}\ar[uu]|<<<<<<<<<{\ \ D(a_{k+1}a_{k+2})^{-1}}\ar@{=}[ll]&&
\dots\ar[ll]&&
Dc_{k}\ar[uu]|<<<<<<<<<{\ \ D(a_{k+1}\dots a_{2k+1})^{-1}}\ar@{=}[ll]\\
d_{k+1}\ar[uu]|>>>>>>>>>>>{\ b_1\dots b_{k+1}}\ar[ur]|-{\ \phi_k b_{k+1}}&&
d_{k+1}\ar[uu]|>>>>>>>>>>>{\ b_2\dots b_{k+1}}\ar[ur]|-{\ \phi_k b_{k+1}}\ar@{=}[ll]
&&\dots\ar@{=}[ll]&&
d_{k+1}\ar[uu]|>>>>>>>>>>>{b_{k+1}}\ar[ur]|-{\ \phi_k b_{k+1}}\ar@{=}[ll]&&
d_{k+1}\ar@{=}[uu]\ar[ur]|-{\ \phi_k b_{k+1}}\ar@{=}[ll]&&
d_{k+1}\ar[uu]|>>>>>>>>>>>{b_{k+2}^{-1}}\ar[ur]|-{\ \phi_k b_{k+1}}\ar@{=}[ll]&&
\dots\ar@{=}[ll]&&
d_{k+1}\ar[uu]|>>>>>>>>>>>{(b_{k+2}\dots b_{2k+1})^{-1}}\ar[ur]|-{\ \phi_k b_{k+1}}\ar@{=}[ll]
}\]
This natural transformation respects the dualities, in the sense that $DU_{\underline{\phi}}\circ U_{D\underline{\phi}}=\id_{D\underline{\phi}}$.
Write this natural transformation as a functor $U\colon N_{2k+1}i\mathcal{D}C\times[1]\longrightarrow N_{2k+1}i\mathcal{D}C$.
In simplicial degree $p$, define a homotopy 
\[K_p\colon N_{2k+1}iS^{2,1}_{p}\mathcal{D}C\times\Delta[1]_p\longrightarrow N_{2k+1}iS^{2,1}_{p}\mathcal{D}C\]
by sending $(\underline{\phi},\sigma\colon [p]\rightarrow [1])$ to the composite
\[\xymatrix{Cat([2],[p])\ar[r]^-{(\id, ev_{1})}&Cat([2],[p])\times [p]\ar[r]^-{\underline{\phi}\times \sigma}&N_{2k+1}i\mathcal{D}C\times [1]\ar[r]^-{U}& N_{2k+1}i\mathcal{D}C}\]
Here we used the identification $N_{2k+1}iS^{2,1}_{p}\mathcal{D}C\cong iS^{2,1}_{p}N_{2k+1}\mathcal{D}C$,
and $ev_1\colon Cat([2],[p])\rightarrow [p]$ is the evaluation functor that sends $\theta\colon [2]\to[p]$ to $\theta(1)$. The construction of this homotopy is similar to the one defined in \cite[1.2.3.2]{DGM}. The map $K$ defines a simplicial homotopy between the $iS^{2,1}_{\sbt}\mathcal{D}C$ component of $(N_{2k+1}e)\circ r$ and the identity. Now we reintroduce the bimodule components. We label the objects and the maps in the diagram $K(\underline{\phi},\sigma)$ by
\[K(\underline{\phi},\sigma)=\left(\vcenter{\hbox{\xymatrix{\sigma (Y_0)\ar[d]_{\sigma(\phi_0)}&
\sigma(Y_1)\ar[d]_{\sigma(\phi_1)}\ar[l]_-{\sigma(b_1)}&
\ar[l]_-{\sigma(b_2)}\dots &&\sigma(Y_{2k+1})\ar[ll]_-{\sigma(b_{2k+1})}\ar[d]^{\sigma(\phi_{2k+1})}
\\
D(\sigma(X_0))&D(\sigma(X_1))\ar[l]^-{D\sigma(a_1)}&\ar[l]^-{D\sigma(a_2)}\dots&&
D(\sigma(X_{2k+1}))\ar[ll]^-{D\sigma(a_{2k+1})}}}}\right)\]
In order to define the homotopy on the bimodule component, we need isomorphisms $\sigma(X_i)\to X_i$ to push forward and pull back the $m_i$'s.
By definition, $\sigma(X_i)$ is the diagram of $S^{2,1}_{p}C$ whose vertex at $\theta\colon [2]\to [p]$ is
\[\sigma(X_i)_{\theta}=\left\{\begin{array}{ll}(X_i)_{\theta}&,\ \sigma\theta (1)=0\\
(X_k)_{\theta}&,\ \sigma\theta (1)=1
\end{array}\right.\]
Let $f_{\sigma}^i\colon \sigma (X_i)\to X_i$ be the isomorphism with components
\[(f_{\sigma}^i)_\theta=\left\{\begin{array}{ll}\id_{(X_i)_\theta}&,\ \sigma\theta (1)=0\\
(a_k\dots a_{i+1})^{-1}_{\theta}&,\ \sigma\theta (1)=1,\ 1\leq i\leq k\\
(a_i\dots a_{k+1})_{\theta}&,\ \sigma\theta (1)=1,\ k<i\leq 2k+1
\end{array}\right.\]
We define the simplicial homotopy $H\colon N_{2k+1}(i\big(S^{2,1}_{\sbt}\mathcal{D}C\big)\ltimes
\mathcal{D}M_{\sbt})\times\Delta[1]\longrightarrow N_{2k+1}(i\big(S^{2,1}_{\sbt}\mathcal{D}C\big)\ltimes
\mathcal{D}M_{\sbt})$ by sending
\[(\underline{\phi},\underline{m},\sigma\colon [p]\rightarrow [1])\longmapsto(K(\underline{\phi},\sigma),K(\underline{\phi},\sigma)_\ast \underline{m})\] where $K(\underline{\phi},\sigma)_\ast \underline{m}$ has $i$-component
\[(K(\underline{\phi},\sigma)_\ast \underline{m})_i= (f_{\sigma}^{i-1})^\ast(f_{\sigma}^{i})_{\ast}^{-1} m_i \ \ \ \ \ \in M(\sigma(X_{i-1}),\sigma(X_i))\]
\end{proof}

\begin{proof}[Proof of \ref{relKRandKR}]
For every $n\geq 0$, there is a commutative diagram of Real $(n+1)$-simplicial sets
\[\xymatrix{\coprod\limits_{(S^{2,1}_{\sbt})^{(n)}\mathcal{D}\mathcal{P}_A}
\!\!\!\!\!\!\!\!\!\mathcal{D}M(S^{1,1})^{(n)}_{\sbt}\ar[r]^-{\cong}\ar[dr]& N_{\sbt}\!\!\!\!\!\!\coprod\limits_{(S^{2,1}_{\sbt})^{(n)}\mathcal{D}\mathcal{P}_A}
\!\!\!\!\!\!\!\!\mathcal{D}M_{\sbt}^{(n)}\ar[r]^-{F\circ e}\ar[d]&N_{\sbt} i(S^{2,1}_{\sbt})^{(n)}\mathcal{D}\mathcal{P}_{A\ltimes M}\ar[d]_{p}&\Ob (S^{2,1}_{\sbt})^{(n)}\mathcal{D}\mathcal{P}_{A\ltimes M}\ar[d]_{p}\ar[l]_{\simeq}\\
&\Ob(S^{2,1}_{\sbt})^{(n)}\mathcal{D}\mathcal{P}_A
\ar[r]^-{\simeq}&N_{\sbt}i(S^{2,1}_{\sbt})^{(n)}\mathcal{D}\mathcal{P}_A&\Ob(S^{2,1}_{\sbt})^{(n)}\mathcal{D}\mathcal{P}_A\ar[l]_{\simeq}
}\]
The three inclusions of objects are equivalences by Remark \ref{Obincl}.
The collection of geometric realizations
\[\KR_n(A;M(S^{1,1}))=|\coprod\limits_{(S^{2,1}_{\sbt})^{(n)}\mathcal{D}\mathcal{P}_A}
\!\!\!\!\!\!\!\!\!\mathcal{D}M(S^{1,1})^{(n)}_{\sbt}|\] has the structure of a symmetric $\mathbb{Z}/2$-spectrum analogous to the one for $\widetilde{\KR}(A;M(S^{1,1}))$, and the diagram above is a diagram of symmetric $\mathbb{Z}/2$-spectra.
Moreover the composite $F\circ e$ is an equivalence for $n\geq 1$ by Propositions \ref{splitPA} and \ref{swallow} (see also Lemma \ref{eqeqofcat}), and hence it induces a $\pi_\ast$-equivalence of $\mathbb{Z}/2$-spectra.

As the Real K-theory spectrum $\widetilde{\KR}(A\ltimes M,w\ltimes h,(\epsilon,0))$ is defined as the homotopy fiber of the right-hand vertical map,
it is enough to show that the homotopy fiber of the left-hand map $\KR(A;M(S^{1,1}))\to \KR(A)$ is $\pi_\ast$-equivalent to $\widetilde{\KR}(A;M(S^{1,1}))$. This is a consequence of the following general argument, which uses only that the map $\KR(A;M(S^{1,1}))\to \KR(A)$ is split. Let $p\colon E\to W$ be a map of $\mathbb{Z}/2$-spectra, with a section $s\colon W\to E$. The sequence $W\to E\to \hoc(s)$ is a fiber sequence, and therefore the horizontal homotopy fibers in the square
\[\xymatrix{
E\ar[r]\ar[d]_p&\hoc(s)\ar[d]\\
W\ar[r]&\ast
}\]
are equivalent, showing that the square is homotopy cartesian. Thus the vertical fibers are also equivalent, that is the homotopy fiber of $p$ is equivalent to the homotopy cofiber of $s$. In our case $s\colon  \KR(A)\to  \KR(A;M(S^{1,1}))$ is a levelwise cofibration, and in particular its homotopy cofiber is equivalent to the strict cofiber, which is $\widetilde{\KR}(A;M(S^{1,1}))$ by definition.
\end{proof}

\section{Appendix}

\subsection{Equivariant Dold-Thom construction and $G$-linearity}\label{app1}

Let $G$ be a finite group, and let $M$ be a simplicial $\mathbb{Z}[G]$-module. Define a functor $M(-)\colon G\mbox{-}\mathcal{S}_\ast\to G\mbox{-}\Top_\ast$
by sending a based simplicial $G$-set to the realization of the based bisimplicial $G$-set with horizontal $n$-simplices
\[M(X)_n=M(X_n)=\big(\bigoplus_{x\in X_n}(M \cdot x)\big)/_{M\cdot\ast}=\bigoplus_{x\in X_n\backslash\ast}(M \cdot x)\]
The simplicial maps are defined as in Example \ref{DoldThom}. The group $G$ acts both on $M$ and $X$, by conjugation. We prove the following proposition, which we used extensively in \S\ref{analKR}.

\begin{prop}\label{confGlin}
The functor $M(-)\colon G\mbox{-}\mathcal{S}_\ast\rightarrow G\mbox{-}\Top_\ast$ is a $G$-linear reduced homotopy functor (see \ref{Glin} and \ref{DoldThom}). Moreover it preserves connectivity, in the sense that
\[\conn M(X)^H\geq\min_{K\leq H}\conn X^K\]
for every subgroup $H$ of $G$.
\end{prop}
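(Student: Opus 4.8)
The proof of Proposition \ref{confGlin} naturally splits into three assertions: that $M(-)$ is a homotopy functor, that it preserves connectivity, and that it is $G$-linear in the sense of Example \ref{DoldThom}. The plan is to reduce all three to elementary properties of the Dold-Thom functor on sets combined with an analysis of fixed points. The key structural observation is that for a finite simplicial $G$-set $X$ and a subgroup $H\leq G$, one can decompose $X_n\setminus\ast$ into $H$-orbits; since $M(X)_n=\bigoplus_{x\in X_n\setminus\ast}M\cdot x$, passing to $H$-fixed points commutes with the direct sum in the sense that $M(X)_n^H$ splits as a sum indexed by $H$-orbits $[x]$ of a term depending only on the stabilizer $H_x$: for a free orbit one gets $M_n$ (untwisted), and for a fixed point $x$ one gets $M_n^{H_x}$ with the conjugation action coming from the $G$-action on $M$. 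This algebraic fact is the engine behind everything. Since geometric realization commutes with both $\bigoplus$ and with taking $H$-fixed points (realization of simplicial sets commutes with finite limits levelwise, and here everything is finite in each degree), one obtains $M(X)^H\simeq\bigoplus_{[x]\in (X/H)}(\text{something})(|X|\text{-related spaces})$, but more usefully a formula expressing $M(X)^H$ in terms of ordinary (non-equivariant) Dold-Thom constructions on fixed-point subcomplexes $X^K$ for $K\leq H$.

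First I would establish the connectivity statement, as it is the cleanest and is used in the analyticity proofs. Fix $H\leq G$. Using the decomposition above, $M(X)^H$ is built (up to equivalence) out of pieces of the form $M^K(X^K)$ — the ordinary Dold-Thom construction $\widehat{M^K}$ applied to the pointed simplicial set $X^K$ — ranging over subgroups $K\leq H$ occurring as stabilizers. By the classical Dold-Thom theorem, $\widehat{N}(Y)$ for an abelian group $N$ and pointed space $Y$ has $\pi_i\widehat{N}(Y)\cong \widetilde{H}_i(Y;N)$, so $\widehat{N}(Y)$ is at least as connected as $Y$ (a $c$-connected space has vanishing reduced homology through degree $c$). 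Hence each piece is at least $\conn X^K$-connected, and taking the direct sum (a product of Eilenberg-MacLane-type pieces, hence connectivity is the minimum) gives $\conn M(X)^H\geq \min_{K\leq H}\conn X^K$. A small technical point: one must check the simplicial version, i.e.\ that a $c$-connected simplicial set $X^K$ yields a $c$-connected $\widehat{M}(X^K)$; this follows from the simplicial Dold-Thom/Dold-Kan identification, identifying $\widehat{M}(X^K)$ with the simplicial abelian group whose normalized chains are $\widetilde{C}_*(X^K;M)$.

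Next I would prove $M(-)$ is a homotopy functor in the sense of Definition \ref{htpyfctr}. It suffices to show that for every subgroup $H$, $M(-)$ sends $H$-equivalences of finite simplicial $H$-sets (weak equivalences on all fixed points $X^K$, $K\leq H$) to $H$-equivalences of spaces. Using the fixed-point decomposition again, $M(f)^H$ is assembled from the maps $\widehat{M^K}(f^K)$ for $K\leq H$; since each $f^K$ is a weak equivalence and $\widehat{M^K}(-)$ preserves weak equivalences (again by Dold-Thom: it induces an iso on homology), each piece is a weak equivalence, hence so is the direct sum, hence $M(f)^H$ is an equivalence. One must be slightly careful that the decomposition is natural in $f$; it is, because $f$ being $G$-equivariant respects orbit types and stabilizers.

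Finally, $G$-linearity. Reducedness, $M(\ast)=\ast$, is immediate. For the cube/square condition, a homotopy cocartesian square of finite simplicial $G$-sets becomes, on each fixed-point subcomplex $(-)^K$, a homotopy cocartesian square of simplicial sets; applying the ordinary Dold-Thom functor $\widehat{M^K}$ turns it into a homotopy cartesian square since $\widehat{M^K}$ is a linear (excisive) functor non-equivariantly — concretely because $\widetilde{H}_*(-;M^K)$ sends Mayer-Vietoris/pushout squares to long exact sequences, so a homology-pushout becomes a homology-pullback, and for spaces with abelian (in fact product-of-Eilenberg-MacLane) homotopy type cartesian and cocartesian squares coincide. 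Reassembling over $K\leq H$ and using that finite products of homotopy cartesian squares are homotopy cartesian gives that $M(\text{cocartesian})^H$ is homotopy cartesian for all $H$, i.e.\ $M(-)$ sends homotopy cocartesian squares to homotopy cartesian squares of $G$-spaces. For the indexed wedge-to-product condition, given a finite $G$-set $J$ one checks directly on the level of simplicial abelian groups that the canonical map $M(\bigvee_J X)\to\prod_J M(X)$ is an \emph{isomorphism}: in each simplicial degree $n$, $(\bigvee_J X)_n\setminus\ast\cong \coprod_{j\in J}(X_n\setminus\ast)$ as $G$-sets, so $M((\bigvee_J X)_n)=\bigoplus_{j\in J}M(X_n)$, and since $J$ is finite this finite direct sum equals the product $\prod_J M(X_n)$, $G$-equivariantly (the $G$-action permutes the $J$-factors with the twist by the action on $X$ in exactly the way required). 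Realization preserves this isomorphism. This handles $|J|$-indexed wedges for all finite $G$-sets $J$ at once, without any induction.

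The main obstacle I anticipate is purely bookkeeping: making the fixed-point decomposition $M(X)^H\simeq\bigoplus_{[x]\in X^{?}/H}(\cdots)$ precise and natural, in particular correctly tracking the conjugation action of $G$ on $M$ that survives on each fixed-point summand over a non-free orbit, and checking that geometric realization of the relevant bisimplicial objects commutes with fixed points and direct sums (true here because everything is degreewise finite, so fixed points are computed degreewise and realization commutes with finite limits of simplicial sets). Once that lemma is stated cleanly, each of the three properties is a one-line reduction to the classical (non-equivariant) Dold-Thom theorem.
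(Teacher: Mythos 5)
Your three conclusions are the right ones, and two fragments of your argument are sound and essentially agree with the paper: the observation that $M(\bigvee_J X)\to\prod_J M(X)$ is already an isomorphism of simplicial abelian groups with $G$-action in every degree (so no induction on $J$ is needed), and reducedness. But the ``key structural observation'' on which you build everything else --- that $M(X)^H$ decomposes, compatibly with the simplicial structure, into ordinary Dold--Thom constructions $\widehat{M^K}(X^K)$ for $K\leq H$ --- is false as a splitting. The orbit decomposition $M(X_n)^H\cong\bigoplus_{[x]\in(X_n\setminus\ast)/H}M^{H_x}$ is correct in each fixed simplicial degree $n$, but the face maps do not respect it: if a simplex $x$ with stabilizer $H_x$ has a face $d_ix$ with strictly larger stabilizer, then on $H$-fixed points the summand $M^{H_x}$ is sent to the summand $M^{H_{d_ix}}$ by the transfer $m\mapsto\sum_{k\in H_{d_ix}/H_x}km$, so summands of different isotropy type are mixed. (This is exactly why $\pi_\ast M(X)^H$ is Bredon homology, which carries an isotropy filtration but no natural splitting by orbit type.) Consequently the steps ``each piece $\widehat{M^K}(f^K)$ is an equivalence, hence so is the direct sum'' and ``reassembling over $K\leq H$'' do not go through; at best you have a filtration of $M(X)^H$ by subobjects $M(X')^H$ for subcomplexes $X'$ of higher isotropy, and running your connectivity and homotopy-invariance arguments on the associated graded requires controlling the connectivity of unions $\bigcup_{K\lneqq L\leq H}X^L$ and of quotients by them. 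That is genuinely more than bookkeeping, and it is the central step of your proof.

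The paper avoids this entirely. For the homotopy-functor property it uses the topological Dold--Thom model $M(Y)=\big(\coprod_{n}(M^n\times Y^n)/\Sigma_n\big)/\!\sim$ on $G$-CW-complexes, which is a continuous functor and hence preserves $H$-homotopy equivalences; the equivariant Whitehead theorem upgrades a weak $H$-equivalence of realizations to an $H$-homotopy equivalence, and a simplicial coefficient module $M$ is handled degreewise. For connectivity it replaces $X$ by an $H$-equivalent $c_H$-reduced simplicial $G$-set, where $c_H=\min_{K\leq H}\conn X^K$, for which $M(X)^H$ is $c_H$-reduced and hence $c_H$-connected --- no decomposition is needed. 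For the square condition it proves directly that a $G$-cofibration $X\subseteq Y$ yields a fiber sequence $M(X)\to M(Y)\to M(Y/X)$ of $G$-spaces, using the degreewise orbit decomposition only to see that the projection is a degreewise surjection of simplicial abelian groups (hence a Kan fibration), together with the Bousfield--Friedlander theorem for the realization. If you want to keep your reduction-to-nonequivariant strategy, you must replace the claimed splitting by an honest isotropy filtration or an induction over equivariant cells; as written, the argument has a gap at its core.
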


\begin{lemma} An inclusion of pointed $G$-simplicial sets $X\subset Y$ in  $G\mbox{-}\mathcal{S}_\ast$ induces a fiber sequence of $G$-spaces $M(X)\to M(Y)\to M(Y/X)$.
\end{lemma}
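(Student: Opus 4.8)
The plan is to reduce the statement to a purely combinatorial fact about the Dold--Thom functor applied to an inclusion of simplicial $G$-sets, degree by degree, and then pass to realizations. First I would observe that since $M(-)$ is extended levelwise, it suffices to work with the bisimplicial $G$-sets $M(X_{\sbt})$, $M(Y_{\sbt})$, $M((Y/X)_{\sbt})$, and it is enough to produce a short exact sequence of simplicial abelian groups with $G$-action
\[0\longrightarrow M(X_n)\longrightarrow M(Y_n)\longrightarrow M(Y_n/X_n)\longrightarrow 0\]
in each simplicial degree $n$, naturally in $n$. This is immediate from the algebraic description $M(Z)=\bigoplus_{z\in Z\setminus\ast}M\cdot z$: the inclusion $X_n\subset Y_n$ of pointed $G$-sets splits $Y_n\setminus\ast$ into $X_n\setminus\ast$ and $(Y_n/X_n)\setminus\ast$ as $G$-sets (as pointed sets the quotient $Y_n/X_n$ has underlying set $(Y_n\setminus X_n)\sqcup\{\ast\}$), so $M(Y_n)\cong M(X_n)\oplus M(Y_n/X_n)$ compatibly with the $G$-action and with the simplicial structure maps, since those are induced functorially from maps of pointed $G$-sets. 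Hence the map $M(X_n)\to M(Y_n)$ is a split monomorphism with cokernel $M(Y_n/X_n)$.

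Next I would upgrade this levelwise splitting to a statement about the associated $G$-spaces. Realizing in the $n$-direction, $M(X)\to M(Y)\to M(Y/X)$ is, after geometric realization of simplicial abelian groups, a sequence of $G$-spaces in which the first map is a closed cofibration (realization of a levelwise split injection of simplicial abelian groups is a cofibration of $G$-spaces, and on every fixed-point space as well, since fixed points commute with the relevant colimits for simplicial sets), and the quotient $M(Y)/M(X)$ is $G$-homeomorphic to $M(Y/X)$. A cofibration of well-pointed $G$-spaces whose cofiber is $M(Y/X)$ is in particular, on each subgroup $H$, a cofibration of spaces, so it yields a long exact sequence in homotopy (equivalently a fiber sequence up to the standard comparison $\hofib\to\Omega\hoc$). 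Therefore the canonical map $M(X)\to \hofib\big(M(Y)\to M(Y/X)\big)$ is a $G$-equivalence, which is exactly the assertion that $M(X)\to M(Y)\to M(Y/X)$ is a fiber sequence of $G$-spaces in the sense used in the paper.

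The only genuinely delicate point is the identification of fixed points: one must check that $M(Z)^H$ behaves well, i.e.\ that $M(-)$ commutes with $H$-fixed points in the appropriate sense so that the cofiber/fiber sequence statement holds on each $M(-)^H$ and not merely on underlying spaces. Here I would use that $M(-)$ is defined as a realization of a bisimplicial $G$-set built from the $G$-sets $M(X_n)$, that geometric realization commutes with fixed points for simplicial sets, and that for a $\mathbb{Z}[G]$-module $M$ and a $G$-set $Z$ one has $M(Z)^H\cong\bigoplus_{[z]\in (Z\setminus\ast)/H}M^{H_z}$ (sum over $H$-orbit representatives with stabilizers $H_z$), which again splits compatibly along $X\subset Y$. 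I expect this bookkeeping with orbits and stabilizers to be the main obstacle, but it is routine and is in any case the same computation already invoked in Lemma~\ref{connwedgesitoprodtop} and in Example~\ref{DoldThom}; with it in hand the fiber sequence on $H$-fixed points, and hence the $G$-fiber sequence, follows.
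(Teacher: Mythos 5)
Your degreewise analysis is correct and coincides with the paper's first step: in each simplicial degree the inclusion $X_n\subset Y_n$ induces a split short exact sequence of $\mathbb{Z}[G]$-modules $0\to M(X_n)\to M(Y_n)\to M(Y_n/X_n)\to 0$, and on $H$-fixed points this is exactly the orbit/stabilizer decomposition $\bigoplus_{[z]\in(Z_n\setminus\ast)/H}M^{H_z}$ that you describe. The gap is in how you pass from this to a fiber sequence of realizations. You argue that the realization of the first map is a cofibration of well-pointed $G$-spaces with cofiber $M(Y/X)$, and that such a cofibration ``yields a long exact sequence in homotopy (equivalently a fiber sequence up to the standard comparison $\hofib\to\Omega\hoc$).'' For \emph{spaces} both of these claims are false in general: a cofiber sequence of spaces does not induce a long exact sequence of homotopy groups, and $\hofib(f)\to\Omega\hoc(f)$ is an equivalence only stably (the paper's Remark \ref{fibhofib} is stated, deliberately, only for spectra). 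Since the lemma asserts a fiber sequence of $G$-\emph{spaces}, this step cannot be waved through by a generic cofibration argument; it is precisely the point where the additive structure must be used.

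The repair is the route the paper takes: because everything in sight is a (bi)simplicial abelian group, the degreewise surjection $M(Y_n)^H\to M(Y_n/X_n)^H$ is a Kan fibration of simplicial abelian groups with fiber $M(X_n)^H$ for every $n$, and one then invokes the Bousfield--Friedlander theorem (or, equivalently here, passes to diagonals, where a levelwise short exact sequence of bisimplicial abelian groups gives a short exact — hence fibration — sequence of simplicial abelian groups) to conclude that the realization is a fiber sequence on $H$-fixed points for every $H\leq G$. Your fixed-point bookkeeping is fine and is indeed the same computation as in Lemma \ref{connwedgesitoprodtop} and Example \ref{DoldThom}; what is missing is the explicit appeal to the fibration property of surjections of simplicial abelian groups and to a realization theorem for the extra simplicial direction, in place of the invalid cofiber-to-fiber argument for spaces.
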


\begin{proof}
The $H$-fixed points of the sequence in simplicial degree $n$ is isomorphic to the projection
\[\bigoplus_{[x]\in (X_n\backslash\ast)/H}M^{H_x}x\longrightarrow \bigoplus_{[y]\in (Y_n\backslash\ast)/H}M^{H_y}y\longrightarrow \bigoplus_{[y]\in (Y_n\backslash X_n)/H}M^{H_y}y \]
where $H_x$ is the stabilizer group of $x$ in $H$.
The projection of this sequence is a Kan fibration of simplicial Abelian groups for every $n$. The bisimplicial Abelian groups $M(X)^H$ and $M(X/Y)^H$ satisfy the conditions for the Bousfield-Friedlander theorem of \cite[IV-4.9]{GJ}, and therefore the realization is a fiber sequence as well.
\end{proof}

\begin{proof}[Proof of \ref{confGlin}]
To prove that $M(-)$ is a homotopy functor, suppose first that $M$ is discrete.
We use a topological model for the functor $M(-)$. Given a $G$-CW-complex $Y$ define
\[M(Y)=\big(\coprod_{n\geq 0}(M^n\times Y^n)/_{\Sigma_n}\big)/\sim\]
where $\sim$ is the standard equivalence relation of the Dold-Thom construction, that collapses the basepoint and identifies the zero labels. The space $M(Y)$ has the quotient topology and its $G$-action is induced by the diagonal action on $M^n\times Y^n$. Notice that $M(-)$ defines a continuous functor from $G$-CW-complex to $G$-spaces, and therefore it preserves $H$-homotopy equivalences.
 If $X$ is a simplicial set, the canonical map $M(X)\rightarrow M(|X|)$
is a natural $G$-homeomorphism. If $f\colon X\rightarrow Y$ is a weak $H$-equivalence of simplicial pointed $G$-sets, the induced map $|X|\rightarrow |Y|$ is a weak  $H$-equivalence of $G$-CW-complexes, and therefore an $H$-homotopy equivalence. By continuity of the functor $M$, the map $M(X)\cong M(|X|)\rightarrow M(|Y|)\cong M(Y)$
is an $H$-homotopy equivalence.
Now let $M$ be a simplicial $\mathbb{Z}[G]$-module, and let $f\colon X\rightarrow Y$ be a weak $H$-equivalence of simplicial pointed $G$-sets. The argument above shows that in every simplicial degree $k$ the map of simplicial $G$-sets
\[M_k(X)\longrightarrow M_k(Y)\]
is an $H$-homotopy equivalence. Therefore its realization $M(X)\rightarrow M(Y)$ is a weak $H$-equivalence, and $M(-)$ is a homotopy functor.

To see that the functor $M(-)$ preserves connectivity,
fix a subgroup $H$ of $G$ and define $c_H:=\min_{K\leq H}\conn X^K$. Suppose first that $X$ is $c_H$-reduced. In this case  $M(X)$ is also $c_H$-reduced, and therefore so is $M(X)^H$. In particular $M(X)^H$ is $c_H$-connected. For the general case, we claim that every pointed $c_H$-connected simplicial $G$-set $X$ is $H$-equivalent to a $c_H$-reduced pointed simplicial $G$-set. Non-equivariantly, one can collapse the simplices of $X$ up to the connectivity of $X$ without changing the weak homotopy type. Collapsing simplices up to simplicial degree $c_H$ gives a $c_H$-reduced pointed simplicial $G$-set that has the same $H$-homotopy type of $X$. Since $M(-)$ is a homotopy functor, $M(X)^H$ is $c_H$-connected. 

Now we prove $G$-linearity. Given a homotopy cocartesian square $X\colon\mathcal{P}(\underline{2})\rightarrow G\mbox{-}\mathcal{S}_\ast$ of pointed simplicial $G$-sets, we can replace the horizontal maps by $G$-cofibrations since $M(-)$ is a homotopy functor. The rows of the diagram
\[\xymatrix{M(X_\emptyset)\ar[r]^{M(f_1)}\ar[d]_{M(f_2)}&N(X_1)\ar[d]^{M(\overline{f}_{2})}\ar[r]&
M\big(\hoc(f_1)\big)\ar[d]_{c}^{\simeq}\\
M(X_2)\ar[r]_{M(\overline{f}_{1})}&M(X_{12})\ar[r]&M\big(\hoc(\overline{f}_{1})\big)
}\]
are fiber sequences by the lemma above, and the map $c$ induced by the map between cofibers is a $G$-equivalence. Therefore the homotopy fibers of $M(f_{2})$ and $M(\overline{f}_{2})$ fit into a fiber sequence
\[\hofib M(f_{2})\longrightarrow\hofib M(\overline{f}_{2})\longrightarrow \hofib(c)\simeq \ast\]
where $\hofib(c)$ is a weakly $G$-contractible space. To finish the proof it remains to show that $M(-)$ sends indexed wedges to indexed products.
Given a pointed simplicial $G$-set $X$ and a finite $G$-set $J$, the canonical map
\[M(\bigvee_JX)\longrightarrow \prod_JM(X)\]
is a $G$-homeomorphism.
\end{proof}

\bibliographystyle{amsalpha}
\bibliography{Gcalculusbib}

\newcommand{\etalchar}[1]{$^{#1}$}
\providecommand{\bysame}{\leavevmode\hbox to3em{\hrulefill}\thinspace}
\providecommand{\MR}{\relax\ifhmode\unskip\space\fi MR }
\providecommand{\MRhref}[2]{%
  \href{http://www.ams.org/mathscinet-getitem?mr=#1}{#2}
}
\providecommand{\href}[2]{#2}
\begin{thebibliography}{FPS{\etalchar{+}}95}

\bibitem[Aro99]{Arone}
Greg Arone, \emph{A generalization of {S}naith-type filtration}, Trans. Amer.
  Math. Soc. \textbf{351} (1999), no.~3, 1123--1150. \MR{1638238 (99i:55011)}

\bibitem[BCR07]{BCO}
Georg Biedermann, Boris Chorny, and Oliver R{\"o}ndigs, \emph{Calculus of
  functors and model categories}, Adv. Math. \textbf{214} (2007), no.~1,
  92--115. \MR{2348024 (2008k:55036)}

\bibitem[Blu06]{Blumberg}
Andrew~J. Blumberg, \emph{Continuous functors as a model for the equivariant
  stable homotopy category}, Algebr. Geom. Topol. \textbf{6} (2006),
  2257--2295. \MR{2286026 (2008a:55006)}

\bibitem[BR14]{BO}
Georg Biedermann and Oliver R{\"o}ndigs, \emph{Calculus of functors and model
  categories, {II}}, Algebr. Geom. Topol. \textbf{14} (2014), no.~5,
  2853--2913. \MR{3276850}

\bibitem[DGM13]{DGM}
Bj{\o}rn~Ian Dundas, Thomas~G. Goodwillie, and Randy McCarthy, \emph{The local
  structure of algebraic {K}-theory}, Algebra and Applications, vol.~18,
  Springer-Verlag London Ltd., London, 2013. \MR{3013261}

\bibitem[DM94]{DM}
Bj{\o}rn~Ian Dundas and Randy McCarthy, \emph{Stable {$K$}-theory and
  topological {H}ochschild homology}, Ann. of Math. (2) \textbf{140} (1994),
  no.~3, 685--701. \MR{1307900 (96e:19005a)}

\bibitem[Dot12]{thesis}
Emanuele Dotto, \emph{Stable real algebraic {$K$}-theory and real topological
  {H}ochschild homology}, Ph.D. thesis, University of Copenhagen, 2012,
  arXiv:1212.4310.

\bibitem[FPS{\etalchar{+}}95]{FPSVW}
Z.~Fiedorowicz, T.~Pirashvili, R.~Schw{\"a}nzl, R.~Vogt, and F.~Waldhausen,
  \emph{Mac {L}ane homology and topological {H}ochschild homology}, Math. Ann.
  \textbf{303} (1995), no.~1, 149--164. \MR{1348360 (97h:19007)}

\bibitem[GJ99]{GJ}
Paul~G. Goerss and John~F. Jardine, \emph{Simplicial homotopy theory}, Progress
  in Mathematics, vol. 174, Birkh\"auser Verlag, Basel, 1999. \MR{1711612
  (2001d:55012)}

\bibitem[Goo90]{calcI}
Thomas~G. Goodwillie, \emph{Calculus. {I}. {T}he first derivative of
  pseudoisotopy theory}, $K$-Theory \textbf{4} (1990), no.~1, 1--27.
  \MR{1076523 (92m:57027)}

\bibitem[Goo03]{calcIII}
\bysame, \emph{Calculus. {III}. {T}aylor series}, Geom. Topol. \textbf{7}
  (2003), 645--711 (electronic). \MR{2026544 (2005e:55015)}

\bibitem[Goo92]{calcII}
\bysame, \emph{Calculus. {II}. {A}nalytic functors}, $K$-Theory \textbf{5}
  (1991/92), no.~4, 295--332. \MR{1162445 (93i:55015)}

\bibitem[Hir03]{Hirschhorn}
Philip~S. Hirschhorn, \emph{Model categories and their localizations},
  Mathematical Surveys and Monographs, vol.~99, American Mathematical Society,
  Providence, RI, 2003. \MR{1944041 (2003j:18018)}

\bibitem[HM15]{IbLars}
Lars Hesselholt and Ib~Madsen, \emph{Real algebraic {$K$}-theory}, to appear,
  2015.

\bibitem[Hor02]{Horn}
Jens Hornbostel, \emph{Constructions and d\'evissage in {H}ermitian
  {$K$}-theory}, $K$-Theory \textbf{26} (2002), no.~2, 139--170. \MR{1931219
  (2003i:19002)}

\bibitem[Lur15]{HH}
Jacob Lurie, \emph{Higher algebra}, 2015.

\bibitem[Man04]{Mandell}
Michael~A. Mandell, \emph{Equivariant symmetric spectra}, Homotopy theory:
  relations with algebraic geometry, group cohomology, and algebraic
  {$K$}-theory, Contemp. Math., vol. 346, Amer. Math. Soc., Providence, RI,
  2004, pp.~399--452. \MR{2066508 (2005d:55019)}

\bibitem[McC97]{McCarthy}
Randy McCarthy, \emph{Relative algebraic {$K$}-theory and topological cyclic
  homology}, Acta Math. \textbf{179} (1997), no.~2, 197--222. \MR{1607555
  (99e:19006)}

\bibitem[MM02]{ManMay}
M.~A. Mandell and J.~P. May, \emph{Equivariant orthogonal spectra and
  {$S$}-modules}, Mem. Amer. Math. Soc. \textbf{159} (2002), no.~755, x+108.
  \MR{1922205 (2003i:55012)}

\bibitem[Sch10]{Sch}
Marco Schlichting, \emph{Hermitian {$K$}-theory of exact categories}, J.
  K-Theory \textbf{5} (2010), no.~1, 105--165. \MR{2600285 (2011b:19007)}

\bibitem[Sch13]{Schwede}
Stefan Schwede, \emph{Lectures on equivariant stable homotopy theory}, 2013.

\bibitem[Seg73]{Segal}
Graeme Segal, \emph{Configuration-spaces and iterated loop-spaces}, Invent.
  Math. \textbf{21} (1973), 213--221. \MR{0331377 (48 \#9710)}

\bibitem[Shi89]{Shima}
Kazuhisa Shimakawa, \emph{Infinite loop {$G$}-spaces associated to monoidal
  {$G$}-graded categories}, Publ. Res. Inst. Math. Sci. \textbf{25} (1989),
  no.~2, 239--262. \MR{1003787 (90k:55017)}

\bibitem[Shi03]{Shipley}
Brooke Shipley, \emph{A convenient model category for commutative ring
  spectra}, 2003.

\bibitem[Vog85]{Vogell}
Wolrad Vogell, \emph{The involution in the algebraic {$K$}-theory of spaces},
  Algebraic and geometric topology ({N}ew {B}runswick, {N}.{J}., 1983), Lecture
  Notes in Math., vol. 1126, Springer, Berlin, 1985, pp.~277--317. \MR{802795
  (87e:55009)}

\bibitem[Wal70]{Wall}
C.~T.~C. Wall, \emph{On the axiomatic foundations of the theory of {H}ermitian
  forms}, Proc. Cambridge Philos. Soc. \textbf{67} (1970), 243--250.
  \MR{0251054 (40 \#4285)}

\bibitem[Wal85]{Wald}
Friedhelm Waldhausen, \emph{Algebraic {$K$}-theory of spaces}, Algebraic and
  geometric topology ({N}ew {B}runswick, {N}.{J}., 1983), Lecture Notes in
  Math., vol. 1126, Springer, Berlin, 1985, pp.~318--419. \MR{802796
  (86m:18011)}

\bibitem[WW98]{WW}
Michael Weiss and Bruce Williams, \emph{Duality in {W}aldhausen categories},
  Forum Math. \textbf{10} (1998), no.~5, 533--603. \MR{1644309 (99g:19002)}

\end{thebibliography}

\end{document}